\theoremstyle{plain}
\newtheorem{theo}{Theorem}[section]
\newtheorem*{theo*}{Theorem}
\newtheorem{cor}[theo]{Corollary}
\newtheorem{prop}[theo]{Proposition}
\newtheorem{lem}[theo]{Lemma}
\newtheorem*{defi*}{Definition}
\newtheorem{rem}[theo]{Remark}
\newcommand{\N}{\mathbb{N}}
\newcommand{\Z}{\mathbb{Z}}
\newcommand{\E}{\mathbb{E}}
\renewcommand{\P}{\mathbb{P}}
\newcommand{\lo}{\mathscr{L}}
\newcommand{\V}{{\mathbb V}\mathrm{ar}}
\newcommand{\red}[1]{{\color{black}{#1}}} 
\newcommand{\un}{\mathds{1}}
\newcommand{\ox}{\overline{x}} 
\newcommand{\oX}{\overline{X}} 
\newcommand{\oz}{\overline{z}} 
\newcommand{\oy}{\overline{y}} 
\newcommand{\ou}{\overline{u}} 
\newcommand{\Hm}{\mathscr{H}}
\newcommand{\co}{ {\mathbf{c_{_0}}}}
\newcommand{\cn}{ {\mathbf{c}}}
\newcommand{\cun}{ {\mathbf{c_{_1}}}}
\newcommand{\cde}{ {\mathbf{c_{_2}}}}
\begin{document}

\title{Simple random walk on $\Z^2$ perturbed on the axis (renewal case)}

\begin{keyword}[class=AenMS]
\kwd[MSC2020 :  ] {05C81}, {60J55}, {60K10}, {60J10}, {60K40}
 \end{keyword}

\begin{keyword}
\kwd{perturbed random walks}
\kwd{renewal theorem}
\kwd{ergodic theorem}
\end{keyword}

\author{\fnms{Pierre} \snm{Andreoletti}\ead[label=e1]{Pierre.Andreoletti@univ-orleans.fr}}
\address{
Institut Denis Poisson,   UMR C.N.R.S. 7013,
Universit\'e d'Orl\'eans, Orl\'eans, France. \printead{e1}}

\and 
\author{\fnms{Pierre} \snm{Debs}\ead[label=e2]{Pierre.Debs@univ-orleans.fr}}
\address{Institut Denis Poisson,   UMR C.N.R.S. 7013,
Universit\'e d'Orl\'eans, Orl\'eans, France. \printead{e2}} 
\runauthor{Andreoletti, Debs}

\begin{abstract}
We study a simple random walk on $\Z^2$ with constraints on the axis. Motivation comes from physics when particles (a gas for example, see \cite{Dalibard}) are submitted to a local field. In our case we assume that the particle evolves freely in the cones but when touching the axis a force pushes it back progressively to the origin. The main result proves that this force can be parametrized in such a way that a renewal structure appears in the trajectory of the random walk. This  implies the existence of an ergodic result for the parts of the trajectory restricted to the axis.

\end{abstract}

\maketitle

\section{Introduction}

We consider  a random walk $\mathbf{X}=(X_n,{n\in\mathbb N})$ on $\mathbb Z^2$ starting at $(1,1)$, which probabilities of transition differ whether the walk is on the axis, denoted by $K^c:=\lbrace (x,y)\in\mathbb Z^2, {xy=0}\rbrace$, or on $K$ which is made up of four cones. More precisely, $\mathbf X$ is a simple random walk on $K$, that is
$p(x, x\pm e_i)=\nicefrac{1}{4}$, for all $x\in K$, where $e_i$ is a vector of the canonical basis, whereas on $K^c\backslash\lbrace(0,0)\rbrace$ the walk is pushed towards the origin: there exists $\alpha \geq 0$ such that for all    $i>0$ 
\begin{align*}
p((i,0),(i+1,0))&=p((i,0),(i, \pm 1))=p((0,i),(0,i+1))=p((0,i),(\pm 1, i))=\frac{1}{4i^{\alpha}},\\
p((i,0),(i-1,0))&=p((0,i),(0,i-1))=1-\frac{3}{4i^{\alpha}},
\end{align*}
and symmetrically when $i<0$ (see figure \ref{whynot}). Note that the origin is a ``special'' point as $p((0,0),\pm e_i)=\nicefrac{1}{4}$. 
\begin{figure}
\includegraphics[scale=0.7]{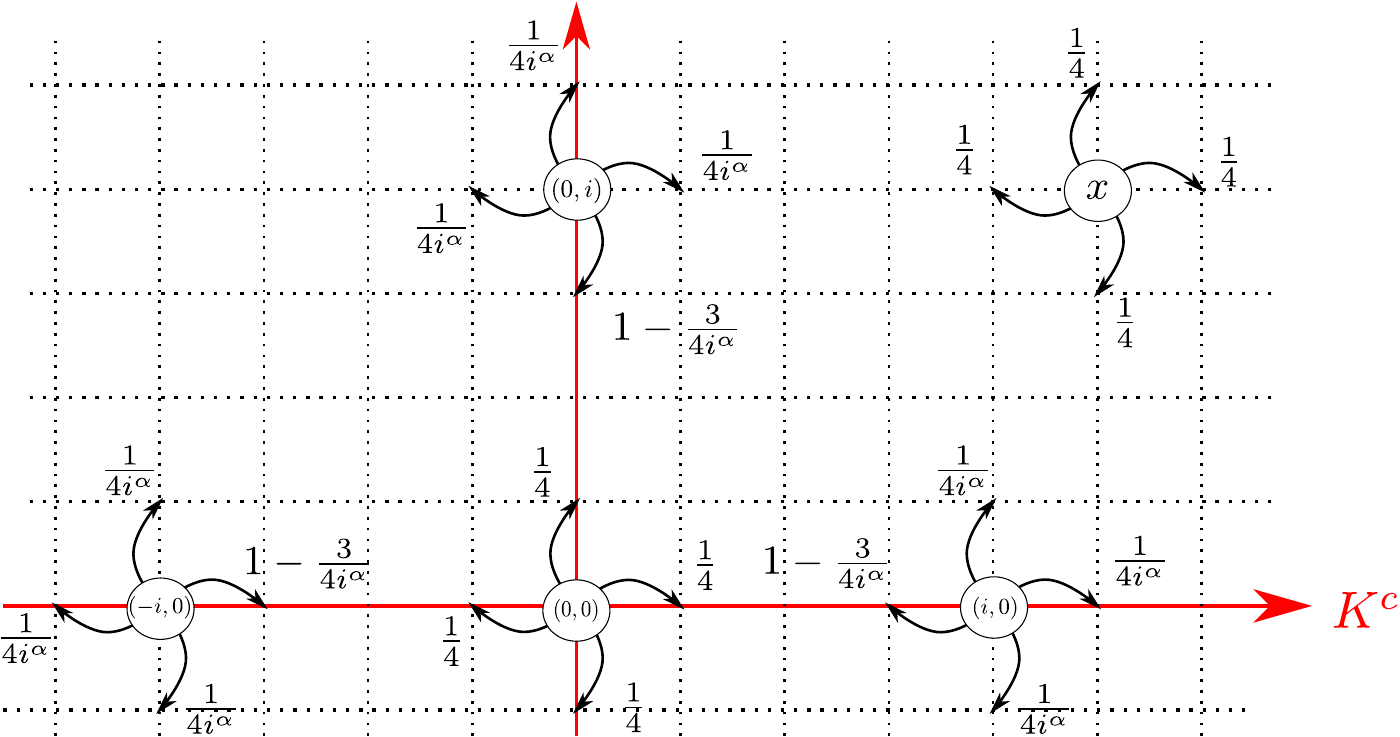}
\caption{probabilities of transition }
\label{whynot}
\end{figure}
The fluctuations  of this random walk can be seen as the movement of a particle which is diffusive on the cones but which is submitted to a field on the axis, this field, which source comes from infinity, is decreasing when the particle gets closer to the origin. Depending on the strength of the local field, represented by $\alpha$, the behavior of this random walk is more or less perturbed comparing to the simple random walk. In this first work we deal with the case for which the applied force toward $0$ is strong, that is $\alpha>3$.  To give an idea on the changes that provokes such a perturbation, we first present two examples of our main result that are related to the local time of the walk: for all subset $A$ of $\Z^2$, $\mathscr{L}(A,n)=\sum_{i=1}^{n}\un_{X_i \in A} $ is the local time of $\mathbf{X}$ in $A$ until time $n$. Our first result states as follows

\begin{theo}
 Assume $\alpha> 3$, there exists two positive constants $\cn$ and $\cn'$ such that:
\begin{align*}
 \frac{\log n}{n}\mathscr{L}((0,0),n)  \overset{\P}{ \rightarrow } \cn',\quad \frac{\log n}{n}\mathscr{L}(K^c,n)  \overset{\P}{ \rightarrow } \cn.
\end{align*}
\end{theo}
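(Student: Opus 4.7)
The plan is to exploit a renewal structure at the origin. Let $\tau_k$ denote the $k$-th visit of $\mathbf X$ to $(0,0)$; then $\mathscr{L}((0,0), n) = \max\{k : \tau_k \leq n\}$ up to an $O(1)$ boundary term, and by the strong Markov property the inter-arrivals $T_k := \tau_{k+1} - \tau_k$ are i.i.d.\ with the common law of $T := \tau_1$ under $\P_{(0,0)}$. The theorem thus reduces to the large-$n$ behaviour of the classical renewal process $N_n := \max\{k : \tau_k \leq n\}$ and the interplay of $N_n$ with an additive functional of the excursions.

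The central ingredient I would establish is the sharp tail estimate
\[
\P_{(0,0)}(T > n) \;\sim\; \frac{c}{n}, \qquad n \to \infty,
\]
for an explicit $c = c(\alpha) > 0$. To obtain it, decompose each excursion into alternating \emph{axis segments} on $K^c$ and \emph{cone excursions} in $K$. Conditionally on staying in $K^c$, the walk on each axis arm is a birth-and-death chain with drift toward $0$ of strength $\sim i^{-\alpha}$; for $\alpha>3$ it is positive recurrent with an invariant measure decaying super-exponentially, so all axis-related quantities (number of visits to $(i,0)$, on-axis time per excursion, position and number of cone entries) have finite moments of every order. Each cone excursion is a piece of 2D simple random walk in a quadrant killed on the coordinate axes; the discrete harmonic function $h(x,y)=xy$ together with the standard Martin-boundary asymptotics for killed SRW in the quadrant yields
\[
\P_{(i,\pm 1)}(T_{\mathrm{cone}} > n) \;\sim\; c_0\, \frac{i}{n}
\]
(consistent with the survival exponent $\pi/(2\theta)=1$ for opening angle $\theta=\pi/2$). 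Summing over entry points into $K$ weighted by the axis occupation measure, and using that the longest cone excursion dominates, produces $\P(T>n)\sim c/n$ with an explicit double sum for~$c$.

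Given this tail, $\E[T\wedge n]\sim c\log n$, placing $T$ in the slowly-varying infinite-mean regime (stable index $1$). The weak law of large numbers for renewal processes in this regime (Erickson's theorem) then yields
\[
\frac{\log n}{n}\,N_n \;\xrightarrow[n\to\infty]{\P}\; \frac{1}{c} \;=:\; \cn',
\]
which is the first convergence. For the second, write $\mathscr{L}(K^c,n) = \sum_{k=1}^{N_n} A_k + O(1)$, where $A_k$ is the time that the $k$-th origin-excursion spends on $K^c$. The $A_k$ are i.i.d.\ with $\E[A_1]<\infty$ by positive recurrence of the axis chain, and a law of large numbers for sums indexed by a renewal process gives $N_n^{-1}\sum_{k=1}^{N_n} A_k \to \E[A_1]$ in probability; multiplying by the first convergence yields $\cn = \E[A_1]/c$.

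The main obstacle is the sharp two-sided tail $\P(T>n)\sim c/n$: one has to produce a precise survival probability for 2D SRW in the quadrant compatible with $h(x,y)=xy$ and the correct normalising constant, match it with the joint distribution of cone-entry points dictated by the axis dynamics, and verify that $T$ is at leading order equal to its longest cone excursion (so that the union bound is sharp). The condition $\alpha>3$ is used throughout to guarantee enough integrability of the axis occupation measure for these estimates to close cleanly, and to reduce the renewal analysis to its simplest slowly-varying form.
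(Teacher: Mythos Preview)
Your approach is correct in outline but genuinely different from the paper's. You propose a single-point renewal at $(0,0)$ with i.i.d.\ inter-arrival times $T_k$, aim for the sharp tail $\P_{(0,0)}(T>n)\sim c/n$, and invoke the index-$1$ weak law for infinite-mean renewal processes. The paper instead unwraps each origin-excursion into its alternating axis/cone pieces: it introduces the entrance times $\eta_i$ (into $K^c$) and $\rho_i$ (into $K$), shows that $(X_{\eta_i})_i$ and $(X_{\rho_i})_i$ are positive recurrent Markov chains with invariant laws $\pi^*,\pi^\dag$, applies Birkhoff's ergodic theorem to $\tfrac1m\sum_{i\le m}f(\mathscr B_i)$, and separately proves that the number $N_n$ of axis/cone alternations before time $n$ satisfies $N_n\sim \cun\, n/\log n$. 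The first theorem then drops out of this general ergodic statement (the paper's Theorem~\ref{mainth}) by taking $f$ to be the indicator of $(0,0)$ or of $K^c$. Your route buys conceptual simplicity (one renewal, one weak law); the paper's route buys that the constants come directly from $\pi^*,\pi^\dag$ and one never has to isolate the exact constant in the tail of $T$.

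Two caveats on your sketch. First, a couple of heuristics are off: the drift toward $0$ on the axis at height $i$ is essentially $1-O(i^{-\alpha})$, not of strength $i^{-\alpha}$; and the re-entry points into $K^c$ after a cone excursion have \emph{polynomial} tails $\sim \co/\ox^3$ (this is precisely the tail of $\pi^*$), so ``finite moments of every order'' is false---only moments of order $<2$ exist, though that suffices for $\E[A_1]<\infty$. Second, and more structurally, to obtain $\P(T>n)\sim c/n$ with the correct $c$ you must average the cone-survival asymptotic $\P_y(\eta>n)\sim 8\oy/(\pi n)$ over the dependent Markovian sequence of cone-entry points $X_{\rho_j}$ visited before return to $(0,0)$; identifying $c$ therefore amounts to computing an occupation-type expectation for that chain, which essentially reproduces the invariant-measure analysis the paper carries out. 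Your shortcut reorganises rather than bypasses that work.
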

\noindent Note that $\cn$ and $\cn'$ are explicitly given a little further.\\
\noindent  Clearly the behavior of the local time of $\mathbf{X}$ is very different than  the one of the symmetric random walk  $\mathbf{S}$. 
For example, $\mathscr{L}_{\mathbf{S}}((0,0),n)/ \log n $ converges in law to an exponential variable with parameter $\pi$ (see for instance \cite{Revesz}, \cite{Erdor}).\\
{It turns out that these two results are just simple consequences of a more general result presented below. It is essentially an ergodic-like theorem for the part of the trajectory on the axis. Recall that we assume $X_0=(1,1)$ and let us introduce the following stopping times: for any $i \in \N^*$
\begin{align}
 \eta_i & := \inf \{k>\rho_{i-1},  X_{k} \in K^c \}, \nonumber \\ 
\rho_{i}& := \inf \{k>\eta_{i},  X_{k} \in K \},  \rho_0=0, \nonumber
\end{align}
that is respectively the $i$-th  exit and entrance times in $K$.
Also for any $n$ let 
\begin{align}
N_n& := \max\{i \leq n,\ \rho_i \leq n \} \label{last},
\end{align}
which is the index of the last excursion to $K$ before the instant $n$.\\
Moreover, for any $x=(x_1,x_2) \in \Z^2$, let us denote by $\ox:=\max(|x_1|,|x_2|)$, the maximum norm of $x$.\\
Our main result is the following:
\begin{theo} \label{mainth} Let 
\[\mathscr{B}_i:=(X_k, \eta_i \leq k < \rho_i)\ \textrm{ and }\   \mathscr{B}^*_n:=(X_k,  \eta_{N_n} \leq k \leq   n  ), \] the portions of  the trajectory of $\bf{X}$ restricted to the axis. 
  Let  $f$ a positive non-decreasing functional (that is for any $i \leq k$, $f(x_1, \cdots,x_i) \leq f(x_1, \cdots,x_k)$) such  that  there exist $0<\delta<2$ and two positive constants $C_1$ and $C_2$ such that for any  $x \in K^c$,
   \begin{align}
 & \E_x\left[f(\mathscr B_0)\right] \leq  C_1 \ox, \label{cond2}\\
 &{\mathbb V}\mathrm{ar}_x\left[f(\mathscr B_0) \right]
 \leq C_2 \ox^{2-\delta}, \label{cond1}
  \end{align} 
where $\mathscr B_0:=(X_k,0\le k<\rho)$ and $\rho:= \inf \{k>0, X_k \in K\}$. Assume $\alpha> 3$ then, in probability
\begin{align}
\lim_{n\rightarrow+\infty}\frac{\log n}{n} \left( \sum_{i=1}^{N_n}f( \mathscr{B}_i)+f( \mathscr{B}_n^* )\right) =\lim_{n\rightarrow+\infty}\frac{\log n}{n} \sum_{i=1}^{N_n}f( \mathscr{B}_i)= \cn^f,
\end{align}
where $ \cn^f$ is a positive constant that is described below.
\end{theo}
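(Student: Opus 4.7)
The plan is to identify a Markov renewal structure at axis re-entries and combine its ergodicity with a scaling estimate for the counting process $N_n$. Set $Y_i:=X_{\eta_i}$: by the strong Markov property $(Y_i)_{i\ge0}$ is a Markov chain on $K^c$, obtained by composing an axis excursion started at $Y_i$ (a drift-dominated walk since $\alpha>3$) with a cone excursion started at $X_{\rho_i}\in K$ (simple random walk in $\Z^2$ run until it first hits $K^c$). I will show that this chain is uniformly ergodic with invariant distribution $\nu$, apply the Markov chain ergodic theorem to $\sum f(\mathscr{B}_i)$, and couple the result with the correct scaling of $N_n$.

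First, I establish a Doeblin minorization for $(Y_i)$. From any $y=(j,0)\in K^c$, the axis excursion reaches $(0,0)$ before exiting to $K$ with probability at least $q^*:=1-\zeta(\alpha)/2>0$, since the escape rate to $K$ at site $(m,0)$ is $1/(2m^\alpha)$ and $\sum_m 1/(2m^\alpha)=\zeta(\alpha)/2<1$ for $\alpha>3$; once at $(0,0)$, the finite path $(0,0)\to(1,0)\to(1,1)\to(1,0)$ has positive probability and yields $Y_{i+1}=(1,0)$. Hence $\P(Y_{i+1}=(1,0)\mid Y_i=y)\ge\beta>0$ uniformly in $y$, which gives geometric ergodicity of $(Y_i)$ and a unique invariant probability $\nu$ on $K^c$.

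Second, I bound the moments of $\nu$ and apply the ergodic theorem. The discrete-quadrant harmonic measure for simple random walk started at $X_{\rho_i}$ near the origin has tail $\P(\overline{X_{\eta_{i+1}}}>t\mid X_{\rho_i}=y_0)\le C/t^2$ (the lattice analogue, via the conformal map $z\mapsto z^2$, of the Cauchy half-plane exit distribution), while the exit site $X_{\rho_i}$ has distribution with mass $\sim 1/m^\alpha$ at distance $m$, integrable for $\alpha>3$; hence $\int \overline{x}\, d\nu(x)<\infty$ and, by (\ref{cond2}), $\E_\nu[f(\mathscr{B}_0)]<\infty$. Birkhoff's theorem applied to $(Y_i)$ then yields $\frac{1}{N}\sum_{i=1}^N f(\mathscr{B}_i)\to\E_\nu[f(\mathscr{B}_0)]$ almost surely; the variance bound (\ref{cond1}) combined with $\int \overline{x}^{2-\delta}\, d\nu<\infty$ upgrades this to convergence in probability with a rate fast enough to substitute the random index $N_n$. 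For the scaling of $N_n$, decomposing $n=\sum_{i=1}^{N_n}(|\mathscr{B}_i|+|\mathcal{C}_i|)+O(1)$ with $|\mathcal{C}_i|$ the $i$-th cone-excursion length, and using the tail $\P(|\mathcal{C}|>t)\sim c/t$ (first exit time of a 2D simple random walk from the quadrant, infinite-mean borderline), a slowly-varying law of large numbers gives $\sum_{i\le N}|\mathcal{C}_i|\sim cN\log N$ in probability; combined with $\sum|\mathscr{B}_i|=O(N)$ from the ergodic theorem, this yields $N_n\log n/n\to \cun$ in probability for an explicit $\cun>0$. Plugging in,
\[
\frac{\log n}{n}\sum_{i=1}^{N_n} f(\mathscr{B}_i)\ \longrightarrow\ \cun\cdot\E_\nu[f(\mathscr{B}_0)]\ =:\ \cn^f
\]
in probability; the boundary term $f(\mathscr{B}_n^*)$ is handled by monotonicity of $f$ and (\ref{cond2}) applied to $X_{\eta_{N_n}}$, yielding $f(\mathscr{B}_n^*)=o(n/\log n)$.

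The main obstacle I anticipate is the scaling of $N_n$: the LLN for $\sum|\mathcal{C}_i|$ based on $\P(|\mathcal{C}|>t)\sim c/t$ sits at the borderline of regular variation (infinite mean, tail index $1$), where standard renewal theorems do not apply and the slowly varying logarithmic correction must be handled explicitly. A secondary difficulty is the quantitative Doeblin estimate, in particular the sharp $C/t^2$ tail for the discrete quadrant harmonic measure, which requires a potential-theoretic computation beyond the informal conformal heuristic.
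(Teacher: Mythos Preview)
Your strategy is the paper's: positive recurrence of the chain $(X_{\eta_i})$ on $K^c$ via a Doeblin-type minorization, an ergodic limit for $m^{-1}\sum_{i\le m}f(\mathscr{B}_i)$, the scaling $N_n\log n/n\to\cun$ from the cone-exit tail $\P_x(\eta>t)\sim 8\overline{x}/(\pi t)$, and a separate treatment of $f(\mathscr{B}_n^*)$. Two points in your sketch would not go through as written. Your Doeblin bound $q^*=1-\zeta(\alpha)/2$ is a union bound over \emph{sites}, but the axis walk may revisit each site arbitrarily often, so $\sum_m 1/(2m^\alpha)$ does not bound the escape probability; the correct uniform lower bound for reaching the origin is the direct-path product $\prod_{m\ge1}(1-3/(4m^\alpha))>0$, valid for every $\alpha>1$. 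More substantially, your handling of the boundary term---``\eqref{cond2} applied to $X_{\eta_{N_n}}$''---does not work: \eqref{cond2} controls $\E_x[f(\mathscr{B}_0)]$, not $f(\mathscr{B}_0)$ pathwise, and $N_n$ is random so one cannot simply condition on it. The paper uses monotonicity to get $f(\mathscr{B}_n^*)\le\max_{1\le i\le n}f(\mathscr{B}_i)$, then a union bound over all $i\le n$, combining Chebyshev via \eqref{cond1} with the uniform tail estimate $\P(\overline{X}_{\eta_i}>n^{1/2+2\varepsilon})\le Cn^{-1-4\varepsilon}$ (Lemma~\ref{simp3.1}); this last estimate is where $\alpha>3$ is used again.

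On the obstacles you flag: the $N_n$ scaling is indeed the crux, and the paper carries out exactly the truncation you anticipate (Lemma~\ref{lem3.3bis}), splitting each $\eta_i-\rho_{i-1}$ at level $\varepsilon m$ and invoking positive recurrence of the \emph{second} embedded chain $(X_{\rho_i})$ with invariant law $\pi^\dag$. For the ergodic limit, rather than applying Birkhoff directly to the path-valued blocks, the paper decomposes $f(\mathscr{B}_i)=\E_{X_{\eta_i}}[f(\mathscr{B}_0)]+\big(f(\mathscr{B}_i)-\E_{X_{\eta_i}}[f(\mathscr{B}_0)]\big)$: Birkhoff on the scalar chain handles the first piece once one checks $\sum_x\overline{x}\,\pi^*(x)<\infty$ (this is where the tail $\pi^*(x)\sim c_0/\overline{x}^{\,3}$ is needed), and orthogonality of the centered increments together with \eqref{cond1} kills the second piece in $L^2$ (Lemma~\ref{lem3.2}); this is precisely the role of the variance hypothesis.
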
 
\noindent Let us give some underlying ideas concerning our main result: the trajectory of $\bf X$ can be split in excursions composed of parts on the axis and parts on the cones. For the parts on the axis, we can prove that at each excursion (before the instant $n$) the walk escapes from $K^c$ in a compact neighborhood of $(0,0)$ with a probability close to one. This escape coordinate is actually driven by a first  invariant probability measure.  The second part on $K$ yields that the number of excursions (composed of the part on the cone and the part on the axis)  before the instant $ n $ is of order $n/ \log n$. This fact comes essentially from the tail of the first time the walk exits a cone when starting from a coordinate of the neighborhood of $(0,0)$. It turns out that exit coordinates of the cone are also driven by a second invariant probability measure.  This finally makes appears a renewal structure for the trajectory of the walk.

\noindent Let us now discuss about the constant which appears in Theorem \ref{mainth}
\begin{align}
\cn^f:=\frac{\pi}{
8} \frac{\sum_{y} \E_y[f(\mathscr B_0)] \pi^*(y)} {\sum_{x}\overline x \pi^{\dag}(x) } =\frac{\pi}{8}\frac{\E_{\pi^*}[f(\mathscr B_0)]}{\E_{\pi^\dag}[X_0]},
\end{align}
where $\pi^*$ and $\pi^{\dag}$ are respectively the unique invariant probability measures of the Markov chains $(X_{\eta_i},i)$ and $(X_{\rho_i},i)$. Note that by definition $\pi^*(x)\neq 0 \iff x \in  K^c $ whereas $\pi^{\dag}(x) \neq 0 \iff x \in \partial K:=\lbrace x\in K \slash \exists y\in K^c, \overline{x-y}=1\rbrace $. We can actually give some details about these two measures, in particular their tail: $\lim_{\ox \rightarrow +\infty } \ox^{3} \pi^*(x)=\co>0$ and $\lim_{\ox \rightarrow +\infty} \ox^{\alpha+2} \pi^{\dag}(x)=\cde $, both  constants $\co$ and $\cde$ are partially explicit (see respectively Proposition \ref{prop1} and Lemma \ref{lemrec2}). 
\noindent With this theorem and the fact that families of function $f$ such that for any $k<m$, $f(x_k,\cdots,x_m)=\sum_{i=k}^m \un_{x_i\in K^c}$ (to obtain the local time on the axis) or $f(x_k,\cdots,x_m )= \sum_{i=k}^m \un_{x_i=(0,0)}$ (to obtain the local time at the origin) satisfy \eqref{cond2} and \eqref{cond1}, we obtain the first theorem. This also leads to the constants $\cn$ and $\cn'$ from above expression of $\cn^f$. Note that the increasing hypothesis on $f$ is only used to prove that  $\frac{\log n}{n}  f( \mathscr{B}_n^*)$ converges in probability to zero and is unnecessary to prove the convergence of  $\frac{\log n}{n} \sum_{i=1}^{N_n}f( \mathscr{B}_i)$ to $\cn^f$. \\

We can think of several extensions and possible generalizations of this result.
The first question is what happen when return-force is lower, that is $\alpha \leq 3$. Well in this case the renewal structure does not exist any more, and things worsen when $\alpha<2$ as there is no longer concentration of $(X_{\rho_i},i)$ in the neighborhood of $(0,0)$ so a totally new approach is needed. \\
Possible generalization concerns first the shape of the return force, we could easily replace function $i \mapsto {i^{-\alpha}}$
 by a sequence $(a_i,i)$ such that $\sum_{i} i^2 a_i <+ \infty$, note that this modification should not change the results. However we chose to keep $i^{-\alpha }$ in order to obtain proof easily readable. For the random walk on $K$, we could also consider more general random walks, this would need improvement of local limit theorems like the ones proved for simple random in Section \ref{lecone} (Lemmata \ref{MarcheSimple} and \ref{MarcheSimpleTemps}). More specifically what is needed, for example, is uniform convergence for large $x$ of $(\P_y(X_{\eta}=x),y)$ (where $\eta:=\inf\{k>0, X_k \in K\}$, see Lemma \ref{MarcheSimple}). Note that $\P_y(X_{\eta}=x)$ is studied for several random walks in \cite{Raschel}, including simple random walk, but some extra work is needed to obtain uniform convergences.\\

The paper is organized as follows, in the following section we prove an ergodic result for the first $m$-excursions, in Section \ref{sec3a} we prove that the number of excursions before the instant $n$ is of order $n/ \log n$ and finish with the proof of Theorem \ref{mainth}. Finally in Section \ref{sec4} (resp. Section \ref{lecone}) we resume stochastic estimations for the walk when it remains on the axis (resp. on the cones). For the seek of completeness, we had an appendix in Section  \ref{sec6}.\\
Note that although our main result needs the assumption $\alpha>3$, we allege this hypothesis whenever it is possible as this will be useful for future works (see more specifically Section \ref{sec4} which only deals with the trajectory of the walk on the axis).

\section{ Ergodic result on the axis during the first $m$  two-types excursions.}

We call $i^{th}$ two-types excursion the trajectory of $X$ on the time interval $\rho_{i-1} < k \leq  \rho_i$, the first part  concerning the cones  and the second one the axis.  In this section we prove 
that for any positive non-decreasing functional $f$ satisfying \eqref{cond2} and \eqref{cond1}, its empirical mean along the trajectory of $X$ on  ${K}^c$ during the first $m$ excursions converges. Recall that for $i\ge 1$, $\mathscr{B}_i=(X_k, \eta_i \leq k < \rho_i)$ and $\mathscr{B}_0=(X_k, 0 \leq k < \rho)$.

\begin{prop} \label{prop1} Assume $\alpha> 3$, there exists a probability measure $\pi^*$ such that
\begin{align*}
\frac{1}{m} \sum_{i=1}^m f(\mathscr{B}_i)   \overset{\P}{ \rightarrow }  \sum_{{x \in K^c}} \pi^*(x)\E_x\left[f(\mathscr B_0)\right]=\E_{\pi^*}[f(\mathscr B_0)], 
\end{align*}
moreover  $\lim_{\ox \rightarrow +\infty } \ox^{3} \pi^*(x)={\mathbf{c_{_0}}}>0$, with $\co:=\frac{16}{\pi}\sum_{y } \pi^*(y)  \E_y[X_{\rho} ].$
\end{prop}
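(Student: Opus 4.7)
The plan is to regard $Y_i:=X_{\eta_i}$ as a Markov chain on $K^c$, show it is positive recurrent with a unique invariant probability $\pi^*$, derive the sharp $\bar x^{-3}$ tail of $\pi^*$ from the invariance equation, and then deduce the ergodic convergence. By the strong Markov property applied at each $\eta_i$, $(Y_i)_{i\ge 1}$ is a Markov chain on $K^c$ with transition kernel
\[
P(y,x)=\sum_{w\in\partial K}\P_y(X_{\rho}=w)\,\P_w(X_{\eta}=x),
\]
and, conditionally on $Y_i=y$, the excursion $\mathscr{B}_i$ has the same law as $\mathscr B_0$ under $\P_y$. Irreducibility and aperiodicity on $K^c$ are routine from the fact that any point of $K^c$ is reachable in a bounded number of steps with positive probability.

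For existence and uniqueness of $\pi^*$, I would run a Foster--Lyapunov argument with $V(y):=\bar y$. When $\bar y$ is large, the restoring force on the axis combined with $\alpha>3$ forces $X_\rho$ to be concentrated in a neighbourhood of $(0,0)$ (quantitative estimates from Section~\ref{sec4}), and from such a starting point on $\partial K$ the simple random walk on the cones re-enters $K^c$ at $X_\eta$ with uniformly bounded first moment (Lemma~\ref{MarcheSimple}). Combining the two gives a drift inequality $\E_y[V(Y_1)]-V(y)\le -c$ outside a compact set, hence positive recurrence and the existence of a unique stationary probability $\pi^*$ on $K^c$.

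The tail asymptotics then follow from the invariance equation $\pi^*(x)=\sum_{y\in K^c}\pi^*(y)P(y,x)$. Plugging in the uniform local limit theorem of Lemma~\ref{MarcheSimple}, which provides $\bar x^{3}\,\P_w(X_\eta=x)\to c\,h(w)$ as $\bar x\to\infty$ (with $h$ the discrete harmonic function in the cone vanishing on the axes, equal to $w_1w_2$ in the first quadrant up to signs), and justifying dominated convergence by a uniform upper bound $\bar x^{3}\,\P_w(X_\eta=x)\le C\,h(w)$, one obtains
\[
\bar x^{3}\pi^*(x)\longrightarrow c\sum_{y\in K^c}\pi^*(y)\sum_{w\in\partial K}\P_y(X_\rho=w)\,h(w)=c\,\E_{\pi^*}[h(X_{\rho})].
\]
Since $h$ restricted to $\partial K$ reduces to $\pm$ one of the coordinates of $X_\rho$, this matches the announced formula $\co=\tfrac{16}{\pi}\sum_y\pi^*(y)\E_y[X_\rho]$ once the multiplicative constant delivered by Lemma~\ref{MarcheSimple} is identified with $\pi/16$. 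The subtle point is justifying the exchange of limit and sum over $y$: one first extracts a rough polynomial upper bound on $\pi^*$ from the Lyapunov step above, then uses the concentration of $X_\rho$ near $(0,0)$ (guaranteed by $\alpha>3$ through Section~\ref{sec4}) to see that $y\mapsto \E_y[h(X_\rho)]$ is bounded, before invoking dominated convergence.

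Finally, for the ergodic convergence, set $g(y):=\E_y[f(\mathscr B_0)]$ and $\xi_i:=f(\mathscr B_i)-g(Y_i)$, so that $(\xi_i)$ is a martingale-difference sequence for the natural filtration. Conditions~\eqref{cond2}--\eqref{cond1} give $|g(y)|\le C_1\bar y$ and $\mathrm{Var}(\xi_i\mid Y_i)\le C_2\bar Y_i^{2-\delta}$. The tail $\pi^*(y)\sim \co\bar y^{-3}$ implies $\E_{\pi^*}[\bar Y]<\infty$ and $\E_{\pi^*}[\bar Y^{2-\delta}]<\infty$ (there are only finitely many points of $K^c$ at each distance $r$ from the origin), so the Markov-chain ergodic theorem yields $m^{-1}\sum_{i=1}^m g(Y_i)\to \E_{\pi^*}[g]$ in probability, while Chebyshev combined with the conditional variance bound and a uniform-in-$i$ moment estimate $\sup_i\E[\bar Y_i^{2-\delta}]<\infty$ (from the Lyapunov drift iterated from the deterministic start $X_0=(1,1)$) gives $m^{-1}\sum_{i=1}^m \xi_i\to 0$ in $L^2$. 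The main obstacle in the whole proof is the third step: the $\bar x^{-3}$ asymptotics with the explicit constant rest on the uniform convergence in Lemma~\ref{MarcheSimple} and on a bootstrap argument to upgrade a rough polynomial bound on $\pi^*$ to the sharp one.
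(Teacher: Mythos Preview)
Your plan matches the paper's structure exactly: split $\sum f(\mathscr B_i)$ into the martingale-difference part $\sum\xi_i$ and the ergodic part $\sum g(Y_i)$, prove positive recurrence of $(Y_i)$, and read off the tail of $\pi^*$ from the invariance equation together with Lemma~\ref{MarcheSimple}. The paper carries this out as Lemmata~\ref{lem3.2} and~\ref{lem3.5}.

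Two technical points where your execution diverges from the paper are worth noting. First, for positive recurrence the paper does not use Foster--Lyapunov: it simply observes that from \emph{any} $y\in K^c$ one has $\P_y(X_\rho=(1,1))\ge C>0$ (this is \eqref{infcone}, using only $\alpha>1$), whence $\P_y(X_{\eta_1}=(0,1))\ge C/4$ uniformly, so $\P_{(0,1)}(\tau_{(0,1)}>k)\le(1-C/4)^k$. This is shorter and avoids the moment input you attribute to Lemma~\ref{MarcheSimple} --- incidentally, the uniform bound $\E_w[\bar X_\eta]\le C(1+\bar w)$ you need there is not a consequence of Lemma~\ref{MarcheSimple} but of McConnell's inequality, invoked in Lemma~\ref{newlem51}.

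Second, for the tail $\bar x^3\pi^*(x)\to\co$ you propose a uniform-in-$w$ bound $\bar x^3\P_w(X_\eta=x)\le C\,h(w)$ to justify dominated convergence. Lemma~\ref{MarcheSimple} only gives this when $\bar w\le \bar x^{1-\delta}$, so your bound for $\bar w$ comparable to $\bar x$ is not established in the paper. The paper sidesteps this: it splits $\P_y(X_{\eta_1}=x)=\sum_z\P_y(X_\rho=z)\P_z(X_\eta=x)$ at $\bar z\le\bar x^{1-\delta}$, uses Lemma~\ref{MarcheSimple} on the small-$\bar z$ part, and on the large-$\bar z$ tail combines the uniform bound $\P_y(X_\rho=z)\le(1+c_+)\bar z^{-\alpha}$ from \eqref{easypeasy} with the identity $\sum_{z\in\partial K}\P_z(X_\eta=x)=2$ (Lemma~\ref{reverse}) to get $o(\bar x^{-3})$. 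This yields $\bar x^3\P_y(X_{\eta_1}=x)\to\tfrac{16}{\pi}\E_y[\bar X_\rho]\le M$ with a bound uniform in $y$ (Corollary~\ref{mean}), so dominated convergence in the sum over $y$ is immediate --- no preliminary polynomial bound on $\pi^*$ and no bootstrap are needed.
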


\noindent To obtain this proposition, we prove two Lemmata, the first one tells that $\sum_{i=1}^m f(\mathscr{B}_i) $ can be approximated by $\sum_{i=1}^m  \E_{X_{\eta_i}}[f(\mathscr B_0)]$,  the second studies the convergence of   this last sum. First Lemma states as follows 
\begin{lem} \label{lem3.2} Assume $\alpha \ge  3$, there exists $0<C^{\prime}<+ \infty$, such that
\begin{align}\label{six}
\E\left[ \frac{1}{m^2} \left( \sum_{i=1}^m  f(\mathscr{B}_i) - \E_{X_{\eta_i}}[f(\mathscr B_0)]  \right)^2\right] \leq \frac{C^\prime}{m}.
\end{align}
\end{lem}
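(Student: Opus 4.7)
The plan is to treat the sum $S_m := \sum_{i=1}^m Y_i$ where $Y_i := f(\mathscr{B}_i) - \E_{X_{\eta_i}}[f(\mathscr B_0)]$ as a sum of orthogonal centered random variables. Setting $\mathcal{F}_k := \sigma(X_j,\,j\le k)$, the strong Markov property applied at the stopping time $\eta_i$ identifies the conditional law of $\mathscr{B}_i$ given $\mathcal{F}_{\eta_i}$ with the law of $\mathscr{B}_0$ under $\P_{X_{\eta_i}}$, since $\mathscr{B}_i$ is precisely the excursion from $\eta_i$ to the first subsequent return $\rho_i$ to $K$. This yields $\E[Y_i\mid\mathcal{F}_{\eta_i}]=0$ and $\E[Y_i^{\,2}\mid\mathcal{F}_{\eta_i}]=\V_{X_{\eta_i}}(f(\mathscr B_0))$.

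For $i<j$, the variable $Y_i$ is $\mathcal{F}_{\rho_i}$-measurable, and since $\rho_i<\eta_j$ it is also $\mathcal{F}_{\eta_j}$-measurable; conditioning on $\mathcal{F}_{\eta_j}$ therefore gives $\E[Y_iY_j]=\E[Y_i\,\E[Y_j\mid\mathcal{F}_{\eta_j}]]=0$, so all cross-terms in the expansion of $S_m^{\,2}$ vanish. The variance hypothesis \eqref{cond1} then delivers
$$\E[S_m^{\,2}] \;=\; \sum_{i=1}^m \E\bigl[\V_{X_{\eta_i}}(f(\mathscr B_0))\bigr] \;\le\; C_2 \sum_{i=1}^m \E\bigl[\overline{X_{\eta_i}}^{\,2-\delta}\bigr].$$

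The remaining and crucial step is the uniform moment estimate $\sup_{i\ge 1}\E\bigl[\overline{X_{\eta_i}}^{\,2-\delta}\bigr]\le M<+\infty$; combined with the previous display, this would give $\E[S_m^{\,2}]\le C'm$, and dividing by $m^2$ produces \eqref{six}. I expect this uniform bound to be the main obstacle. It should rest on two inputs: first, the tail $\pi^*(x)\sim \co\,\ox^{-3}$ of the invariant measure announced in Proposition~\ref{prop1}, which makes $\E_{\pi^*}[\overline{X}^{\,2-\delta}]<+\infty$ for every $\delta>0$ (the sum $\sum_k k^{2-\delta}\cdot k^{-3}$ converges because the number of points in $K^c$ with $\ox=k$ is bounded); second, a stochastic domination of the law of $X_{\eta_i}$, uniformly in $i$, by a measure of comparable tail. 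This domination should follow from the axis and cone estimates developed in Sections~\ref{sec4} and~\ref{lecone}: under $\alpha\ge 3$, each axis phase brings the walk back into a neighborhood of the origin, so that the law of $X_{\rho_{i-1}}$ is tight, and the subsequent cone excursion of the simple random walk produces an exit distribution on $K^c$ with tail of order $\ox^{-3}$. Granting this uniform bound, the conclusion is immediate.
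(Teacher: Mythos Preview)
Your reduction is correct and identical to the paper's: the strong Markov property makes the $Y_i$ orthogonal, so $\E[S_m^2]=\sum_{i=1}^m\E[\V_{X_{\eta_i}}(f(\mathscr B_0))]\le C_2\sum_{i=1}^m\E[\overline{X}_{\eta_i}^{\,2-\delta}]$. The issue is entirely in how you propose to bound $\sup_i\E[\overline{X}_{\eta_i}^{\,2-\delta}]$.

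Your suggested route through the tail of $\pi^*$ is both circular and insufficient. Circular, because the tail estimate $\pi^*(x)\sim\co\,\ox^{-3}$ is part of Proposition~\ref{prop1}, whose proof invokes the very Lemma~\ref{lem3.2} you are trying to establish. Insufficient, because even granting the tail of $\pi^*$, this only controls $\E_{\pi^*}[\overline{X}^{\,2-\delta}]$, i.e.\ the stationary moment; it says nothing about $\E[\overline{X}_{\eta_i}^{\,2-\delta}]$ for each finite $i$. Turning a stationary bound into a uniform-in-$i$ bound would require an explicit domination of each transition law by a measure with the right tail, and you do not supply one.

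The paper closes the gap with a direct argument that bypasses $\pi^*$ entirely (Lemma~\ref{newlem51}): a classical moment inequality for simple random walk in a cone gives $\E_x[\overline{X}_\eta^{\,\beta}]\le C(1+\ox^{\,\beta})$ for every $0<\beta<2$, and Corollary~\ref{mean} gives $\sup_{z\in K^c}\E_z[\overline{X}_\rho^{\,\beta}]\le M$ whenever $\alpha-\beta>1$. Two applications of the strong Markov property then yield
\[
\E[\overline{X}_{\eta_i}^{\,\beta}]=\E\bigl[\E_{X_{\rho_{i-1}}}[\overline{X}_\eta^{\,\beta}]\bigr]\le C\bigl(1+\E[\overline{X}_{\rho_{i-1}}^{\,\beta}]\bigr)=C\bigl(1+\E[\E_{X_{\eta_{i-1}}}[\overline{X}_\rho^{\,\beta}]]\bigr)\le C(1+M),
\]
uniformly in $i$. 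This is the missing ingredient; with it your argument is complete.
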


\begin{proof}
Strong Markov property leads to 
\begin{align*} 
 \E\left[ \left( \sum_{i=1}^m  f(\mathscr{B}_i) - \E_{X_{\eta_i}}[f(\mathscr B_0)]  \right)^2 \right] 
=\sum_{i=1}^m \E\left[ \V_{X_{\eta_i}}[f(\mathscr B_0 )]\right].
\end{align*}
By hypothesis \eqref{cond1}, above quantity is smaller than $C_2 \sum_{i=1}^m \E[\overline{X }_{\eta_i}^{2-\delta}]$ with $0<\delta<2$ and we conclude using Lemma \ref{newlem51} telling that there exists $C>0$ such that $\E\left[\overline{ X}_{\eta_i}^{2-\delta}\right]<C$ for all $i\in \mathbb N^*$. 
\end{proof}

\noindent \\ Second Lemma, which is essentially an ergodic result, writes
\begin{lem} \label{lem3.5} Assume $\alpha>3$, the Markov chain  $(X_{\eta_i},i)$ is positive recurrent and its invariant probability measure $\pi^*$ satisfies 
\begin{align}\label{Birkh1}
\frac{1}{m} \sum_{i=1}^m  \E_{X_{\eta_i}}[f(\mathscr B_0)]\overset{\P.a.s.}{ \rightarrow }  \sum_{x\in K^c} \pi^*(x)\E_x[f(\mathscr B_0)]=\E_{\pi^*}[f(\mathscr B_0)].
\end{align}
\end{lem}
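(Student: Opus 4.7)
The plan is to prove Lemma \ref{lem3.5} in three main steps: establish that $(X_{\eta_i})_{i\geq 1}$ is an irreducible positive recurrent Markov chain on the countable state space $K^c$, control the moments of its invariant probability measure $\pi^*$, and then apply the classical Markov-chain ergodic theorem to the observable $g(x):=\E_x[f(\mathscr B_0)]$.

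First, by the strong Markov property applied at the stopping times $\eta_i$, the sequence $(X_{\eta_i})$ is a Markov chain on $K^c$, whose transition kernel is the composition of an axis excursion (from $x\in K^c$ to an exit site $X_\rho\in\partial K$, as analyzed in Section \ref{sec4}) followed by a cone excursion (from $\partial K$ back to $K^c$, as analyzed in Section \ref{lecone}). Irreducibility on $K^c$ holds because, via the origin, any two axis sites communicate in finitely many steps with positive probability.

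Second, positive recurrence together with a moment bound on $\pi^*$ would be obtained from a Foster--Lyapunov drift criterion with test function $V(x):=\overline x^{\beta}$ for an exponent $\beta\in(1,2-\delta)$ (shrinking $\delta$ if necessary). The key input is that for $\alpha>3$ the inward push on the axis is strong enough that, starting from $x\in K^c$ with large $\overline x$, the axis excursion ends close to the origin; combined with the tightness estimate of Lemma \ref{newlem51} and the cone-exit estimates of Section \ref{lecone} (which prevent the subsequent cone excursion from shifting $\overline x$ too much), this should give $\E_x[V(X_{\eta_1})]\leq V(x)-1$ off a finite set $F$ containing $(0,0)$. Standard Meyn--Tweedie theory then yields existence and uniqueness of $\pi^*$ together with $\sum_x \overline x^{\beta}\pi^*(x)<\infty$, so in particular $\E_{\pi^*}[\overline{X_0}]<\infty$.

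Third, hypothesis \eqref{cond2} gives $0\leq g(x)\leq C_1\,\overline x$, hence $g\in L^1(\pi^*)$, and Birkhoff's ergodic theorem for positive recurrent irreducible Markov chains on a countable state space yields the $\P$-a.s.\ convergence
\begin{align*}
\frac{1}{m}\sum_{i=1}^m g(X_{\eta_i})\ \longrightarrow\ \sum_{x\in K^c}\pi^*(x)\,g(x)=\E_{\pi^*}[f(\mathscr B_0)],
\end{align*}
which is precisely \eqref{Birkh1}. The main obstacle is verifying the drift inequality quantitatively: this is where the sharp control on the axis excursion from Section \ref{sec4} and the assumption $\alpha>3$ become essential, since for $\alpha\leq 3$ the inward drift is not strong enough to force a contraction of $V$ at infinity, and the chain would fail to be positive recurrent.
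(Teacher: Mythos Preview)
Your overall strategy is sound, but it differs from the paper's in two notable respects.

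For positive recurrence, the paper does not use a Foster--Lyapunov drift. Instead it observes a uniform one-step minorization: by \eqref{infcone}, $\P_y(X_\rho=(1,1))\geq C>0$ for every $y\in K^c$, and hence $\P_y(X_{\eta_1}=(0,1))\geq C/4$ for every $y$. This immediately gives $\P_{(0,1)}(\tau_{(0,1)}>k)\leq (1-C/4)^k$ and thus $\E_{(0,1)}[\tau_{(0,1)}]<\infty$. This is a Doeblin-type argument and is both shorter and more elementary than setting up a drift inequality.

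For the integrability $\E_{\pi^*}[\overline{X_0}]<\infty$, the paper does not invoke Meyn--Tweedie either. It computes the tail of $\pi^*$ directly: using the local limit estimate of Lemma \ref{MarcheSimple} together with Corollary \ref{mean}, it shows $\lim_{\ox\to\infty}\ox^3\pi^*(x)=\co>0$, which gives $\sum_x \ox\,\pi^*(x)<\infty$ explicitly. This extra information (the precise decay rate of $\pi^*$) is actually part of the statement of Proposition \ref{prop1}, so the paper needs it anyway; your approach would only yield a moment bound without the asymptotic.

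One small caution on your write-up: the drift inequality $\E_x[V(X_{\eta_1})]\leq V(x)-1$ as stated gives positive recurrence but not $\pi^*(V)<\infty$. What actually holds here is much stronger: combining Corollary \ref{mean} with the McConnell bound used in the proof of Lemma \ref{newlem51} gives $\sup_{x\in K^c}\E_x[\oX_{\eta_1}^\beta]<\infty$, i.e.\ a uniform bound, hence a geometric drift $PV\leq \lambda V+b$. This is what yields $\pi^*(V)<\infty$ via Meyn--Tweedie; your sketch should state this explicitly. With that adjustment your route is correct, just less explicit than the paper's.
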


\begin{proof}  $(X_{\eta_i},i)$ being obviously irreducible,  we just have to prove that $(0,1)$ is positive recurrent. Introduce, for any $x\in K^c$,  $\tau_x=\inf\lbrace k\ge 0, X_{\eta_k}=x \rbrace$, then for all $k>0$: 
\begin{align*}
\P_{(0,1)}\left(\tau_{(0,1)}>k\right)&=\P\left(\forall i\le k,  X_{\eta_i}\ne (0,1)\right)\\ 
&=\sum_{y\in, K^c\backslash \lbrace(0,1)\rbrace}\P_{(0,1)}\left(\forall i\le k-2,  X_{\eta_i}\ne (0,1),X_{\eta_{k-1}}=y\right)\left(1-\P_y\left(X_{\eta_1}= (0,1)\right)\right).
\end{align*} 
According to \eqref{infcone}, there exists $0<C<1$ such that for all $y\in K^c,\, \P_y\left(X_{\rho}=(1,1)\right)>C$, implying:
\begin{align*}
\P_y\left(X_{\eta_1}= (0,1)\right)
\ge \P_{y}\left(X_\rho=(1,1)\right)\P_{(1,1)}\left(X_{\eta}=(0,1)\right)
\ge  C\P_{(1,1)}\left(X_{1}=(0,1)\right)=\frac{C}{4}.
\end{align*}
Consequently, with an obvious induction reasoning: 
\begin{align*}
\P_{(0,1)}\left(\tau_{(0,1)}>k\right)&\le \sum_{y\in K^c\backslash \lbrace(0,1)\rbrace}\P_{(0,1)}\left(\forall i\le k-2,  X_{\eta_i}\ne (0,1),X_{\eta_{k-1}}=y\right)\left(1-\frac{C}{4}\right)\\
&=\P_{(0,1)}\left(\tau_{(0,1)}>k-1 \right)\left(1-\frac{C}{4}\right)\le \left(1-\frac{C}{4}\right)^k.
\end{align*} 
Thus  $\E_{(0,1)}\left[\tau_{(0,1)} \right]=\sum_{k\ge0}\P_{(0,1)}\left(\tau_{(0,1)}>k\right)
<\infty$ and  $(X_{\eta_i},i)$ is positive recurrent. \\
\eqref{Birkh1} is an application of Birkhoff's ergodic Theorem so we only have to check that 
 $\E_{\pi^*}[f(\mathscr B_0)]  $ exists:  first note that by condition  \eqref{cond2} for any $x$, $\E_x[f(\mathscr B_0)] \leq  C_+ \ox$, so we only have to check that $ \sum_{x} \ox \pi^*(x)<+\infty $.  For that, we have to study the asymptotic in $\ox$ of $\pi^*(x)$. Let $y$ in $K^c$ and $\delta>0$ small enough such that $(1-\delta) \alpha>3$, for any $x$  
  \begin{align}
\P_y(X_{\eta_1}=x)=\sum_{\overline{z} \leq \ox^{1- \delta} } \P_y(X_{\rho}=z)\P_z(X_{\eta}=x)+ \sum_{\overline{z} > \ox^{1- \delta}} \P_y(X_{\rho}=z)\P_z(X_{\eta}=x). \label{eq10}
\end{align}
 By \eqref{easypeasy}, there exists a positive constant $c_+$ such that for all $z\in K$, $  \P_y(X_{\rho}=z) \leq (1+c_+) \oz^{-\alpha} $,  so using Lemma \ref{reverse}, the second sum above is bounded by 
  \begin{align}
\!\!\!\! (1+c_+)\sum_{\overline{z} > \ox^{1- \delta}} \oz^{- \alpha}\P_z(X_{\eta}=x) &\leq  (1+c_+) \ox^{- (1- \delta)\alpha}\!\! \sum_{\overline{z} > \ox^{1- \delta}} \P_z(X_{\eta}=x)  
\leq 2 (1+c_+) \ox^{- (1- \delta)\alpha} =o(\ox^{-3}).\label{eq11}
\end{align}
 Local limit result (Lemme \ref{MarcheSimple}) implies that $(\ox^3/\oz) \P_z(X_{\eta}=x) \sim 16/\pi  $ for any large $x$ uniformly in $z$ with $\oz \leq \ox^{1- \delta}$,   so for the  first sum in \eqref{eq10} we get for large $\ox $
 \begin{align}
 \sum_{\overline{z} \leq \ox^{1- \delta} } \P_y(X_{\rho}=z)\P_z(X_{\eta}=x) \sim  \frac{16}{\pi\ox^3}  \sum_{\overline{z} \leq \ox^{1- \delta} }   \oz    \P_y(X_{\rho}=z) =  \frac{16}{\pi\ox^3} \E_y[\oX_{\rho} \un_{X_{\rho} \leq  \ox^{1- \delta} }].\label{eq12}
\end{align}
Then \eqref{eq10}, \eqref{eq11} and \eqref{eq12} implies that for any $y$, 
\begin{align*}
\lim_{\ox\rightarrow+\infty}\ox^3  \P_y(X_{\eta_1}=x) =\frac{16}{\pi}  \E_y[\oX_{\rho}]\le M
\end{align*}
 where $M$ does not depend on $y$ by Corollary  \ref{mean}. As $\pi^*(x)=\sum_{y\in K^c}\pi^*(y)\P_y(X_{\eta_1}=x)$:
\[ \lim_{\ox \rightarrow +\infty}  \ox^{3 } \pi^*(x)=\sum_{y } \pi^*(y)  \lim_{\ox \rightarrow +\infty } \ox^{3 } \P_y(X_{\eta_{1}} = x )=\frac{16}{\pi} \sum_{y } \pi^*(y)  \E_y[\oX_{\rho} ]\le \frac{16 M}{\pi}.\]  
So $ \sum_{x}\ox \pi^*(x)<+\infty $ is satisfied. 
\end{proof}

\noindent  Lemmata \ref{lem3.2} and \ref{lem3.5} ensure that $\frac{1}{m} \left( \sum_{i=1}^m  f(\mathscr{B}_i) - \E_{X_{\eta_i}}[f(\mathscr B_0)]\right)$ tends to 0 and $\frac{1}{m}  \E_{X_{\eta_i}}[f(\mathscr B_0)]$ to $\E_{\pi^*}[f(\mathscr B_0)]$ in probability which gives Proposition \ref{prop1}.
It yields following Corollary giving the behaviors (after $m$ excursions) of the local time on the axis and at $(0,0)$. 

\begin{cor} \label{cor1} Assume $\alpha> 3$,
\begin{align*}
\frac{1}{m} \lo(K^c, \rho_m) \overset{\P}{ \rightarrow }  \sum_{x} \pi^*(x)\E_x[\rho] \textrm{ and } \frac{1}{m} \lo((0,0), \rho_m)
 \overset{\P}{ \rightarrow }  \sum_{x} \pi^*(x)\E_x[\lo(0,\rho)]. 
\end{align*}
\end{cor}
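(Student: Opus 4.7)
The plan is to obtain both convergences by applying Proposition \ref{prop1} to two well-chosen functionals $f$, and then to verify in each case the conditions \eqref{cond2} and \eqref{cond1} together with monotonicity.

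For the first convergence, take $f_{1}$ to be the length of the excursion, i.e.\ for a trajectory $(x_1,\dots,x_k)$ set $f_{1}(x_1,\dots,x_k)=k$. Since $X_0=(1,1)\in K$, the walk visits $K^c$ only during the excursions $\mathscr{B}_{1},\dots,\mathscr{B}_{m}$, so that
\[
\lo(K^c,\rho_m)\;=\;\sum_{i=1}^{m}(\rho_i-\eta_i)\;=\;\sum_{i=1}^{m}f_{1}(\mathscr{B}_i),
\]
and $\E_x[f_{1}(\mathscr{B}_{0})]=\E_x[\rho]$. For the second convergence, take $f_{2}(\mathscr{B})$ equal to the number of visits of the excursion to the origin; then $\lo(\{(0,0)\},\rho_m)=\sum_{i=1}^{m}f_{2}(\mathscr{B}_i)$ and $\E_x[f_{2}(\mathscr{B}_{0})]=\E_x[\lo(0,\rho)]$. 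Both $f_{1}$ and $f_{2}$ are manifestly positive and, being counts of occurrences along the trajectory, they are non-decreasing in the sense required by Theorem~\ref{mainth}/Proposition~\ref{prop1}.

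It remains to check the moment conditions \eqref{cond2}--\eqref{cond1} for $f_1$ and $f_2$. For $f_1$ this amounts to showing $\E_x[\rho]\le C_1\ox$ and $\V_x[\rho]\le C_2\ox^{2-\delta}$ for some $0<\delta<2$; since from any $x\in K^c$ the walk on the axis steps towards the origin with probability $1-\tfrac{3}{4\ox^\alpha}$, the hitting time of a neighbourhood of $(0,0)$ followed by escape off the axis is of linear order in $\ox$ with small fluctuations, and both estimates are exactly of the type established in the axis-analysis Section~\ref{sec4}. For $f_2$ the same argument applies: the number of visits to $(0,0)$ during a single excursion is bounded above by $\rho$, and can be controlled even more sharply since the walk must first travel essentially straight to the origin before making an (almost geometric) number of visits there. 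Once these bounds are in hand, Proposition~\ref{prop1} applies and yields
\[
\frac{1}{m}\lo(K^c,\rho_m)\overset{\P}{\to}\sum_{x}\pi^*(x)\E_x[\rho],\qquad
\frac{1}{m}\lo(\{(0,0)\},\rho_m)\overset{\P}{\to}\sum_{x}\pi^*(x)\E_x[\lo(0,\rho)],
\]
which is the announced corollary.

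The main obstacle is thus not in the present proof but upstream, namely the verification of the variance bound $\V_x[\rho]\le C_2\ox^{2-\delta}$: the expectation $\E_x[\rho]\lesssim \ox$ follows from a direct comparison with the biased walk on the half-line, while the variance requires a finer control of the fluctuations of the time spent at each level, which is precisely the role of the estimates gathered in Section~\ref{sec4} and which use the hypothesis $\alpha>3$ (through finiteness of sufficiently many moments of the time spent at a given level before being swept toward the origin or thrown into $K$).
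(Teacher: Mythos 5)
Your proposal is correct and follows essentially the same route as the paper: apply Proposition \ref{prop1} to the excursion length and to the number of visits to the origin, and verify \eqref{cond2}--\eqref{cond1}, the needed mean and variance bounds for $\rho$ being exactly Lemma \ref{lem6.2} of Section \ref{sec4} (note that this lemma only requires $\alpha>1$; the hypothesis $\alpha>3$ enters through Proposition \ref{prop1} itself, not through these moment estimates).
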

\begin{proof}
As $\lo(K^c, \rho_m)= \sum_{i=1}^m (\rho_i-\eta_i) $, if we take, for any $k<m$, $f(x_k,\cdots,x_m)=m-k$, then $\lo(K^c, \rho_m)= \sum_{i=1}^m f(\mathscr{B}_i) $. So to prove the result for $\lo(K^c, \rho_m)$ we only have to check that conditions  \eqref{cond2} and \eqref{cond1} are full-filled for this $f$. By Lemma \ref{lem6.2}, for any $\gamma \in \{1,2\}$ there exists $0<\varepsilon<1$ such that for any $x\in K^c$:
\[\E_{x} [f^\gamma(\mathscr B_0)] -\E_{x} [f(\mathscr B_0 )]^\gamma  =\E_{x} [\rho^\gamma ] -\E_{x} [\rho ]^\gamma \leq  \ox^{\gamma-\varepsilon} , \]
 so both \eqref{cond2} and \eqref{cond1} are satisfied. 
$\lo((0,0), \rho_m)=\sum_{i=1}^m (\lo((0,0),\rho_i)-\lo((0,0),\eta_i)) $ is treated similarly.
\end{proof}

\section{The number of two-types excursions before the instant $n$ \label{sec3a}}
 
In this section we prove following Proposition, recall that $N_n$ is the number of the last entrance in $K$ before $n$ (see  \eqref{last}).
 \begin{prop} \label{prop31}
 Assume $\alpha>3$, there exists a probability measure $\pi^{\dag}$ such that
 \begin{align}
\frac{\log m}{m}N_m  \overset{\P}{ \rightarrow } \frac{\pi}
{8}\frac{1}{  \sum_{x}\overline x \pi^{\dag}(x)}=:\cun.
\end{align}
 \end{prop}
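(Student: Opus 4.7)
The strategy is to establish a scaling limit for $\rho_m$ on the scale $m\log m$ and then invert. Precisely, I plan to prove
\[
\frac{\rho_m}{m\log m}\overset{\P}{\rightarrow}\frac{1}{\cun},
\]
and deduce the claim from the sandwich $\rho_{N_n}\le n<\rho_{N_n+1}$ coming from \eqref{last}, combined with the fact that $\log N_n/\log n\to 1$ in probability (which is immediate once $N_n$ is known to be of polynomial order in $n$).

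First I would set up the Markov chain $(X_{\rho_i},i)$: by an argument mirroring Lemma~\ref{lem3.5} (irreducibility on $\partial K$ plus a uniform lower bound on the probability of returning to $(1,1)$), it is positive recurrent, with invariant probability $\pi^\dag$ whose tail $\overline x^{\alpha+2}\pi^\dag(x)\to\cde$ is provided by Lemma~\ref{lemrec2}; in particular $\E_{\pi^\dag}[\overline X_0]<\infty$. I would then decompose $\rho_m=\sum_{i=1}^m(T_i^K+T_i^{K^c})$ with $T_i^K:=\eta_i-\rho_{i-1}$ the time spent on the cones and $T_i^{K^c}:=\rho_i-\eta_i$ the time spent on the axis. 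Applying Corollary~\ref{cor1} to the functional counting steps on $K^c$ (whose conditions \eqref{cond2}--\eqref{cond1} follow from Lemma~\ref{lem6.2}), $\sum_i T_i^{K^c}$ grows only linearly in $m$, hence is negligible at scale $m\log m$. All the asymptotic weight of $\rho_m$ therefore lies in the cone times $T_i^K$.

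The crucial input is a tail estimate for $T^K$, furnished by the local-limit-type estimates of Section~\ref{lecone} (Lemma~\ref{MarcheSimpleTemps}): since the harmonic function of simple random walk killed on the axes of a quadrant is $h(x_1,x_2)=x_1x_2$ and $|x_1x_2|=\overline x$ on $\partial K$, one expects
\[
\P_y(T^K>t)\sim \frac{8}{\pi}\,\frac{\overline y}{t},\qquad t\to\infty,\ y\in\partial K,
\]
uniformly in the relevant range; integrating yields $\E_y[T^K\wedge t]\sim \frac{8}{\pi}\overline y\log t$ together with a truncated second moment of order $\overline y\,t$. Setting $\tilde T_i:=T_i^K\wedge(m\log m)$, I would combine (i) a second-moment estimate in the spirit of Lemma~\ref{lem3.2} to control $\sum_i(\tilde T_i-\E_{X_{\rho_{i-1}}}[\tilde T])$, with (ii) the ergodic theorem for $(X_{\rho_i})$ applied to the renormalised functional $y\mapsto \E_y[\tilde T]/\log m$, to obtain
\[
\frac{1}{m\log m}\sum_{i=1}^m \tilde T_i\overset{\P}{\rightarrow} \frac{8}{\pi}\E_{\pi^\dag}[\overline X_0]=\frac{1}{\cun};
\]
the same tail estimate also shows that the total mass discarded by the truncation is negligible compared to $m\log m$ in probability.

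The main obstacle is precisely this truncated law of large numbers: because $\E_y[T^K]=+\infty$, Birkhoff's theorem does not apply directly to $(T_i^K)$, and the truncation threshold $m\log m$ must be simultaneously compatible with the tail of $\pi^\dag$, the variance of $\tilde T_i$, and the contribution of atypically long excursions. Everything else --- the positive recurrence of $(X_{\rho_i})$, the negligibility of the axis times via Corollary~\ref{cor1}, and the final inversion from $\rho_m$ to $N_n$ --- is either a direct adaptation of Section~2 or a routine calculation.
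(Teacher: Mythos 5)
Your plan is correct and follows essentially the same route as the paper: decompose $\rho_m$ into axis and cone times, dismiss the axis part via Corollary \ref{cor1}, obtain the cone part through the tail of $\eta$ (Lemma \ref{MarcheSimpleTemps}), a truncation with second-moment control in the spirit of Lemma \ref{lem3.2}, and Birkhoff's theorem for $(X_{\rho_i},i)$ with the first moment of $\pi^\dag$ from Lemma \ref{lemrec2}, before inverting $\rho_m$ to get $N_n$. The only differences (truncation at $m\log m$ instead of $\varepsilon m$, and the exact bookkeeping of the uniformity restriction $\overline y = o(t^{1/2})$ in the tail estimate, which the paper handles by splitting on $\oX_{\rho_{i-1}}\le k^{1/2-\delta}$ together with Lemma \ref{lem410}) are cosmetic.
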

 
 \noindent The main idea comes from decomposition of $\rho_k=\sum_{i=1}^k \rho_i-\eta_{i}+\sum_{i=1}^k \eta_i-\rho_{i-1}$, then by Corollary \ref{cor1}, $\sum_{i=1}^k \rho_i-\eta_{i}$ is of order $k$ whereas we show below that in probablity $\sum_{i=1}^k \eta_i-\rho_{i-1}$ is of order $k \log k$. This last fact comes from the tail of $\eta$ (the single return instant to the axis) together with the fact that for any $i \leq k$, with an overwhelming probability $X_{\rho_i}$ is in the neighborhood of $(0,0)$ which size is independent of $i$ and of the coordinates of entry of the walk on the axis. We start with following Lemma 
 
 \begin{lem} \label{lemrec2}
 Assume  $\alpha >3$, $(X{\rho_i},i)$ is positive recurrent. Moreover its invariant probability measure $\pi^{\dag}$ satisfies $\lim_{\ox \rightarrow +\infty} \ox^{\alpha+2} \pi^{\dag}(x)=\cde $ with $\cde= \frac{2}{ \pi} \sum_{u}\ou \pi^{\dag}(u)$.
\end{lem}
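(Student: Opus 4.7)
The plan is to follow closely the two-step strategy used in Lemma~\ref{lem3.5}.

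\emph{Positive recurrence.} Since $(X_{\rho_i},i)$ is clearly irreducible on $\partial K$, it suffices to check $\E_{(1,1)}[\tau_{(1,1)}]<\infty$, where $\tau_{(1,1)}:=\inf\{k\ge 1,\,X_{\rho_k}=(1,1)\}$. Writing $\P_y(X_{\rho_1}=(1,1))=\sum_{u}\P_y(X_\eta=u)\,\P_u(X_\rho=(1,1))$ and using the uniform lower bound $\P_u(X_\rho=(1,1))\ge C$ of \eqref{infcone}, I obtain $\P_y(X_{\rho_1}=(1,1))\ge C$ uniformly in $y\in\partial K$. The same geometric induction as in the proof of Lemma~\ref{lem3.5} then yields $\P_{(1,1)}(\tau_{(1,1)}>k)\le(1-C)^k$, hence positive recurrence.

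\emph{Tail of $\pi^\dag$.} I would start from the invariance identity combined with the one-step decomposition through the axis,
\begin{align*}
\pi^\dag(x)=\sum_{y\in\partial K}\pi^\dag(y)\,\P_y(X_{\rho_1}=x)=\sum_{y}\pi^\dag(y)\sum_{u\in K^c}\P_y(X_\eta=u)\,\P_u(X_\rho=x),
\end{align*}
and aim at the pointwise asymptotic
\begin{align*}
\ox^{\alpha+2}\,\P_y(X_{\rho_1}=x)\xrightarrow[\ox\to+\infty]{}\frac{2}{\pi}\,\oy,
\end{align*}
uniformly in $y\in\partial K$ with $\oy\le\ox^{1-\delta}$. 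To establish it, I would split the inner sum over $u$ at a threshold $\ou\le\ox^{1-\delta'}$ and $\ou>\ox^{1-\delta'}$. On the small-$u$ side, the axis walk must climb against the strong drift towards the origin to reach a far point $x\in\partial K$, so $\P_u(X_\rho=x)$ decays super-polynomially in $\ox$ and the contribution is $o(\ox^{-(\alpha+2)})$. On the large-$u$ side, Lemma~\ref{MarcheSimple} provides $\ou^{3}\,\P_y(X_\eta=u)\to\frac{16}{\pi}\oy$ uniformly in $y$, which I would combine with a local-limit estimate for $\P_u(X_\rho=x)$ coming from the one-dimensional axis analysis of Section~\ref{sec4}; its dominant contribution, summed over $u$ on the axis in a neighborhood of $x$ via a Riemann-sum argument of the type used in \eqref{eq12}, should reproduce the prefactor $2/\pi$ and force the exponent $\alpha+2$.

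Once the pointwise asymptotic is in hand, a uniform bound of the form $\ox^{\alpha+2}\,\P_y(X_{\rho_1}=x)\le C(1+\oy)$ together with the finiteness of $\sum_y\pi^\dag(y)\,\oy$ (coming from the analogue of Lemma~\ref{newlem51} applied to $X_{\rho_i}$) permits dominated convergence in the outer sum on $y$, yielding
\begin{align*}
\lim_{\ox\to+\infty}\ox^{\alpha+2}\,\pi^\dag(x)=\frac{2}{\pi}\sum_{y\in\partial K}\pi^\dag(y)\,\oy=\cde.
\end{align*}
The main obstacle is the axis-side local-limit estimate for $\P_u(X_\rho=x)$ when $\ou$ and $\ox$ are of comparable order: it is the ingredient that pins down both the polynomial exponent $\alpha+2$ and the constant $2/\pi$, and it must be furnished by the one-dimensional analysis of Section~\ref{sec4}, where the strong drift structure of the axis walk should allow explicit asymptotics via renewal-type arguments.
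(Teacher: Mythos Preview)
Your positive-recurrence argument is correct and essentially identical to the paper's.

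For the tail of $\pi^\dag$, your overall scheme (decompose through $u\in K^c$, use Lemma~\ref{MarcheSimple} on the cone side, then an axis-side estimate) is exactly the paper's, but two points need correction.

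First, the relevant split of the inner sum is not by the magnitude of $\ou$ but by the \emph{geometric position} of $u$ relative to $x$: writing $x=(1,x_2)$, the paper sets $L_x=\{(0,z_2):z_2\ge x_2\}$ and separates $u\in L_x$ from $u\in L_x^c$. The reason is that the axis-side estimate is two-sided and already sits in Section~\ref{sec4} as Corollary~\ref{cor4.6}: for $u\in L_x$ one has $\P_u(X_\rho=x)\sim 1/(4\ox^\alpha)$, while for every $u\in L_x^c$ one only has the \emph{polynomial} bound $\P_u(X_\rho=x)\le C'/\ox^{2\alpha}$ (not super-polynomial as you write). Since $2\alpha>\alpha+2$ under $\alpha>3$, the $L_x^c$ part is $o(\ox^{-(\alpha+2)})$. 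On $L_x$, combining $\P_y(X_\eta=u)\sim 16\oy/(\pi\ou^3)$ with $\P_u(X_\rho=x)\sim 1/(4\ox^\alpha)$ and the elementary tail sum $\sum_{u\in L_x}\ou^{-3}\sim \ox^{-2}/2$ yields the claimed $\frac{2}{\pi}\oy\,\ox^{-(\alpha+2)}$ directly. So what you flag as ``the main obstacle'' is precisely Corollary~\ref{cor4.6}(1), and no further renewal-type analysis is needed.

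Second, for the outer sum in $y$: the paper handles the range $\oy>\ox^{1-\delta}$ (where Lemma~\ref{MarcheSimple} no longer applies) by using the a~priori bound $\pi^\dag(y)\le c'/\oy^\alpha$ together with \eqref{easypeasy}; this bound is obtained from Lemma~\ref{lem410} via $\pi^\dag(x)=\lim_n\P(X_{\rho_n}=x)$, and it is also what gives $\sum_y\oy\,\pi^\dag(y)<\infty$. The reference you want here is Lemma~\ref{lem410}, not Lemma~\ref{newlem51} (which concerns $X_{\eta_i}$).
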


\begin{proof}
The proof of the fact that $(X_{\rho_i},i)$ is positive recurrent is very similar than for $(X_{\eta_i},i)$. Using the irreducibility of this chain, it suffices to prove that $(1,1)$ is positive recurrent. Denote by $\tilde \tau_A=\inf\lbrace k> 0, X_{\rho_k}\in A\rbrace$, then, for all $k>0$: 
\begin{align*}
\P_{(1,1)}\left(\tilde \tau_{(1,1)}>k\right)
&=\sum_{y\in K\backslash \lbrace(1,1)\rbrace}\P_{(1,1)}\left(\forall i\le k-2,  X_{\rho_i}\ne (1,1),X_{\rho_{k-1}}=y\right)\left(1-\P_y\left(X_{\rho_1}= (1,1)\right)\right).
\end{align*} 
Using \eqref{infcone}, there exists $0<C<1$ such that for all $z\in K^c,\, \P_z\left(X_{\rho}=(1,1)\right)>C$, implying that
 $\P_y\left(X_{\rho_1}= (1,1)\right)\ge C \sum_{z\in K^c}\P_y\left(X_{\eta}=z\right)=C$. So, with an induction reasoning:
 \[\E_{(1,1)}[\tilde \tau_{(1,1)}]=\sum_{k\ge 0}\P_{(1,1)}\left(\tilde \tau_{(1,1)}>k\right)
 \le\sum_{k\ge 0} \left(1-C\right)^k<\infty.\] 
Let us study the tail of invariant probability measure $\pi^{\dag}$. First, note that as $(X_{\rho_i},i)$ is obviously aperiodic, using Lemma \ref{lem410}, for all $x\in K$: 
\begin{align}\label{infdag}
\pi^\dag(x)=\lim_{n\rightarrow+\infty}\P(X_{\rho_n}=x)\leq \frac{c'}{\ox^{\alpha}}
\end{align}
implying that $\pi^\dag$ has a first moment and $\cun\ne 0$.
Let $\delta>0$ small enough such that $(\alpha-1)(1- \delta)>2$,  assume $x=(1,x_2)$ with $x_2>0$ (other cases can be treated similarly), let $L_x:=\{z = (0,z_2)\ with\ z_2 \geq x_2 \}$ and $L_x^c$ the relative complement of $L_x$  in $K^c$. As $\pi^\dag(x)=\sum_{y\in K}\pi^\dag(y)\P_y(X_{\rho_1}=x)$: 
\begin{align} 
&\pi^\dag(x)= \sum_{y} \pi^\dag(y) \sum_{z \in L_x} \P_y(X_\eta=z)\P_z(X_{\rho}=x)+ \sum_{y} \pi^\dag(y) \sum_{z \in L_x^c} \P_y(X_\eta=z)\P_z(X_{\rho}=x)=:S_1+S_2. \label{la13}  
\end{align}
When $z \in L_x^c$, by the second point of Corollary \ref{cor4.6}, there exists $C^\prime>0$ such that $\P_z(X_{\rho}=x)\leq C^\prime/ \ox^{ 2 \alpha}$, so as $\alpha>3$
\begin{align*} 
 S_2\le \frac{C^\prime}{\ox^{ 2 \alpha}} \sum_{y\in K} \pi^\dag(y) \sum_{z \in L_x^c} \P_y(X_\eta=z)\leq \frac{C^\prime}{\ox^{ 2 \alpha}}=o(\ox^{-2-\alpha}).  
\end{align*}
We now deal with the first sum in \eqref{la13} which we decompose as follows 
\begin{align*} 
 S_1 
= \sum_{y, \oy \leq \ox^{1- \delta}} \pi^\dag(y) \sum_{z \in L_x} \P_y(X_\eta=z)\P_z(X_{\rho}=x)
 +\sum_{y, \oy > \ox^{1- \delta}} \pi^\dag(y) \sum_{z \in L_x} \P_y(X_\eta=z)\P_z(X_{\rho}=x)=: \Sigma_1+ \Sigma_2.
\end{align*}
For $\Sigma_2$, by \eqref{easypeasy}
and \eqref{infdag}, as $\alpha+(\alpha-1)(1- \delta)>\alpha+2$: 
\[\Sigma_2\le \frac{C_+}{\ox^\alpha}\sum_{y, \oy > \ox^{1- \delta}} \frac{1}{\oy^\alpha}\le  \frac{C_+}{\ox^{\alpha+(\alpha-1)(1- \delta)}}=o(\ox^{-2-\alpha}),\] 
where $C_+$ is a positive constant that may grow from line to line. In view of what we want to prove, $S_2$ and $\Sigma_2$ are negligible.\\ 
For $\Sigma_1$,  we use, as for $\Sigma_2$, \eqref{easypeasy}, the first point of Corollary \ref{cor4.6} and also Lemma \ref{MarcheSimple} telling that uniformly in $y$, with $\oy \leq \ox^{1- \delta}$ and $z \in L_x $, $ \P_y(X_\eta=z) \sim 16 \oy / \oz^3 \pi$, from this we deduce that 
\begin{align*} 
 \Sigma_1\sim \sum_{y, \oy \leq \ox^{1- \delta}} \pi^\dag(y)   \sum_{z \in L_x} \frac{4 \oy }{\pi \oz ^3 \ox ^{\alpha}}\sim \frac{2}{\pi\ox ^{\alpha+2}} \sum_{y, \oy \leq \ox^{1- \delta}} \oy \pi^\dag(y) . 
\end{align*}
Finally as $\pi^\dag$ has a first moment, $ \lim_{\ox \rightarrow +\infty} \ox ^{\alpha+2}  \Sigma_1 = \frac{2}{\pi}\sum_{u}\ou \pi^{\dag}(u)$, this finishes the proof.
\end{proof}

\noindent Second Lemma below is a law of large number for the time spent by the walk on the cone during the first $m$ excursions.
 \begin{lem} \label{lem3.3bis} Assume $\alpha>3$, then in probability 
\begin{align*}
\lim_{m \rightarrow + \infty} \frac{1}{m \log m}\sum_{i=1}^m(\eta_i-\rho_{i-1})= \frac{8}{\pi} \sum_{x\in K} \overline x \pi^{\dag}(x).
\end{align*}
\end{lem}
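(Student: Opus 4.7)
Set $\tau_i := \eta_i - \rho_{i-1}$, the time spent on the cones during the $i$-th excursion. Conditionally on $X_{\rho_{i-1}} = x$, $\tau_i$ has the same law as the first hitting time $\eta$ of $K^c$ by simple random walk starting at $x$. Because $X_{\rho_{i-1}} \in \partial K$ for every $i$, the key analytic input is the sharp tail of this hitting time on $\partial K$, which I expect is delivered by the local limit theorems of Section~\ref{lecone} (cf.\ Lemma~\ref{MarcheSimpleTemps}) in the form
\[
\P_x(\eta > n) \;\sim\; \frac{8\,\ox}{\pi\, n} \qquad (n \to \infty,\ x \in \partial K),
\]
together with a uniform majoration $\P_x(\eta > n) \leq C\ox/n$. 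Integration then gives
\[
\E_x\!\left[\eta \wedge M\right] \sim \tfrac{8}{\pi}\,\ox\,\log M, \qquad \E_x\!\left[(\eta \wedge M)^2\right] \leq 2C\,\ox\, M.
\]

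The argument proceeds by a truncated-moment computation combined with Birkhoff's ergodic theorem applied to the positive recurrent chain $(X_{\rho_i},i)$ with invariant probability measure $\pi^\dag$ (Lemma~\ref{lemrec2}). Fix $b_m := m \log m$ and set $\widetilde \tau_i := \tau_i \wedge b_m$. The uniform tail bound together with the uniform in $i$ boundedness of $\E[\oX_{\rho_{i-1}}]$ (which follows from Lemma~\ref{lemrec2} and the tail estimates of Section~\ref{sec4}) yields
\[
\P\!\bigl(\exists\, i \leq m :\ \tau_i > b_m\bigr) \;\leq\; \frac{C}{b_m}\sum_{i=1}^m \E\!\left[\oX_{\rho_{i-1}}\right] \;=\; O(1/\log m),
\]
so $\sum_{i=1}^m \tau_i$ and $\sum_{i=1}^m \widetilde \tau_i$ coincide with probability tending to $1$, and it suffices to prove the stated convergence for the truncated sum.

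I then split $\sum_i \widetilde \tau_i$ into its conditional mean plus a fluctuation. Conditioning on the whole trajectory of the chain $(X_{\rho_j})_j$, the $(\widetilde \tau_i)$ are independent, and
\[
\E\!\left[\widetilde \tau_i \mid X_{\rho_{i-1}}\right] = \E_{X_{\rho_{i-1}}}[\eta \wedge b_m] \sim \tfrac{8}{\pi}\,\oX_{\rho_{i-1}}\log m.
\]
Birkhoff's ergodic theorem applied to the observable $x \mapsto \ox$, which is $\pi^\dag$-integrable since $\pi^\dag(x) \asymp \ox^{-\alpha-2}$ with $\alpha > 3$, produces
\[
\frac{1}{m \log m}\sum_{i=1}^m \E\!\left[\widetilde \tau_i \mid X_{\rho_{i-1}}\right] \;\overset{\P\text{-a.s.}}{\longrightarrow}\; \frac{8}{\pi}\sum_{x \in K} \ox\,\pi^\dag(x).
\]
For the fluctuation, conditional independence and the second-moment bound give
\[
\V\!\left[\sum_{i=1}^m \widetilde \tau_i \;\Big|\; (X_{\rho_j})_j\right] = \sum_{i=1}^m \V_{X_{\rho_{i-1}}}\!\left[\eta \wedge b_m\right] \leq 2C\,b_m \sum_{i=1}^m \oX_{\rho_{i-1}} = O\bigl(m^2 \log m\bigr),
\]
whose square root is $o(m \log m)$; Chebyshev's inequality then closes the argument.

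The principal obstacle is the uniform sharp tail $\P_x(\eta > n) \sim \tfrac{8 \ox}{\pi n}$ valid over the entire set $\partial K$: the pointwise Brownian counterpart for a quadrant of angle $\pi/2$ is classical (the harmonic function is $(a,b)\mapsto ab$, which on $\partial K$ coincides with $\ox$), but here one needs a quantitative discrete version with uniform control in the starting point so that the random initial condition $X_{\rho_{i-1}}$ can be handled by the moment estimates above. A secondary point is that Birkhoff's theorem is applied to the unbounded observable $\ox$; its $\pi^\dag$-integrability, precisely the reason the hypothesis $\alpha > 3$ enters, is what legitimises this step.
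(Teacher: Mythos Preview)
Your plan is essentially the paper's proof: truncate each cone-excursion time, control the fluctuation around its conditional mean by a second-moment (martingale-difference) bound, and finish with Birkhoff's theorem for the positive recurrent chain $(X_{\rho_i})$ applied to the $\pi^\dag$-integrable observable $\ox$. The paper truncates at level $\varepsilon m$ (with a secondary cutoff at $m(\log m)^{1/2}$) rather than your $m\log m$, but the architecture is identical.

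One point deserves comment. You postulate a \emph{uniform} majoration $\P_x(\eta>n)\le C\ox/n$ and a uniform asymptotic $\E_x[\eta\wedge M]\sim \tfrac{8}{\pi}\ox\log M$ over all $x\in\partial K$. The paper does not prove either statement in that strength: Lemma~\ref{MarcheSimpleTemps} gives the sharp tail only for $\ox=o(n^{1/2})$, and in fact $\E_x[\eta\wedge M]\sim \tfrac{8}{\pi}\ox\log M$ \emph{fails} when $\ox$ is a power of $M$ (the sum $\sum_{n<\ox^2}\P_x(\eta>n)$ behaves like $c\ox$, not $c\ox\log M$). The paper circumvents this by repeatedly splitting on $\{\oX_{\rho_{i-1}}\le k^{1/2-\delta}\}$ versus its complement and invoking Lemma~\ref{lem410}, i.e.\ $\P(X_{\rho_{i-1}}=x)\le c'\ox^{-\alpha}$, to kill the large-$\ox$ contribution (the $F(a,b)$ bookkeeping in the proof). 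Your sketch can be repaired the same way, or by the simpler device of cutting at a fixed level $\{\ox\le A\}$, using the pointwise tail for finitely many starting points, and letting $A\to\infty$ via the $\pi^\dag$-integrability of $\ox$. You correctly flag this uniformity as the principal obstacle, so this is a detail to fill in rather than a flaw in the strategy.

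A minor remark: you invoke conditional independence of the $\widetilde\tau_i$ given the whole sequence $(X_{\rho_j})_j$, but then compute $\E[\widetilde\tau_i\mid X_{\rho_{i-1}}]$; it is cleaner (and what the paper does, see \eqref{55}) to observe directly that the centred increments form a martingale-difference sequence with respect to the natural filtration $\mathcal F_{\rho_i}$, which yields the orthogonality you need without the extra conditioning.
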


\begin{proof}
For any $0<\varepsilon<1$, let us decompose $\sum_{i=1}^m(\eta_i-\rho_{i-1})$ as follows
\begin{align*}
\sum_{i=1}^m (\eta_i-\rho_{i-1})=\sum_{i=1}^m (\eta_i-\rho_{i-1})\un_{\eta_i-\rho_{i-1} \leq \varepsilon m} + \sum_{i=1}^m (\eta_i-\rho_{i-1})\un_{\eta_i-\rho_{i-1} > \varepsilon m}=:\Sigma_1+ \Sigma_2.  
\end{align*}
\noindent $\bullet$ { Let us prove that $\Sigma_2=o(m\log m)$: let $\mathscr{A} :=\{\sum_{i=1}^m \un_{\eta_i-\rho_{i-1} >  \varepsilon m} \leq  (\log m)^{1/4} \}$,  Markov inequality gives 
\begin{align*}
\P\left(\mathscr{A}^c \right) \leq \frac{1}{(\log m)^{1/4}} \sum_{i=1}^m \P(\eta_i-\rho_{i-1}> \varepsilon m ).
\end{align*}
For $0<\delta<\nicefrac{1}{2}$, one can write: 
\begin{align*}
\P(\eta_i-\rho_{i-1}> \varepsilon m ) &=\sum_{x\in K} \P(X_{\rho_{i-1}}=x)\P_x(\eta> \varepsilon m) \\ 
&=\sum_{x, \ox \leq m^{1/2- \delta}} \P(X_{\rho_{i-1}}=x)\P_x(\eta> \varepsilon m) +\sum_{x, \ox > m^{1/2- \delta}} \P(X_{\rho_{i-1}}=x)\P_x(\eta> \varepsilon m).
\end{align*}
According to Lemma \ref{MarcheSimpleTemps}, uniformly in $x$ such that $\ox\le m^{\nicefrac{1}{2}-\delta}$ for $m$ large enough  $\P_x(\eta> \varepsilon m) \sim 8 \ox / \pi \varepsilon m$. Then for $m$ large enough and as $\alpha>3$ using Lemma \ref{lem410}:
\[\sum_{x, \ox \leq m^{1/2- \delta}} \P(X_{\rho_{i-1}}=x)\P_x(\eta> \varepsilon m) \leq  \frac{8}{\varepsilon m} \E[X_{\rho_{i-1}}]     \leq  \frac{C_+}{\varepsilon m}.\]
As $\alpha>3$, we can chose $\delta$ such that $(\alpha-1)(1/2-\delta)> 1$ and using again Lemma \ref{lem410}:
\[\sum_{x,\ox > m^{{1}/{2}- \delta}} \P(X_{\rho_{i-1}}=x)\P_x(\eta> \varepsilon m) \leq  \sum_{x, \ox > m^{1/2- \delta}} \frac{c'}{\ox^{\alpha}} \leq \frac{c'}{m^{(\alpha-1)(1/2-\delta)}} =o\Big( \frac{1}{m}\Big).\]
Then $\P(\eta_i-\rho_{i-1}> \varepsilon m ) \leq C_+ / \varepsilon m$,  so finally  $\P(\mathscr{A}^c) \leq C_+ / (\log m)^{1/4} \varepsilon $. A similar computation also prove that $\P(\mathscr{C}^c) \leq C_+ /   (\log m)^{1/2}$  with  $\mathscr{C}:=\{ \sum_{i=1}^m \un_{\eta_i-\rho_{i-1} >  m (\log m)^{1/2} }    =0 \} $. Now, notice that on $\mathscr{A} \cap \mathscr{C}$, 
\begin{align*}
\Sigma_2=\sum_{i=1}^m (\eta_i-\rho_{i-1})\un_{\eta_i-\rho_{i-1} > \varepsilon m}\le m (\log m)^{1/2}  \sum_{i=1}^m \un_{\eta_i-\rho_{i-1} > \varepsilon m} \leq   m (\log m)^{3/4},
\end{align*}
which implies $\P\left(\Sigma_2>  m (\log m)^{3/4} \right)  \leq \nicefrac{C_+} {((\log m)^{1/4} \varepsilon)}$,
and thus, in probability $\Sigma_2=o(m \log m)$.} \\
\noindent $\bullet$ For $\Sigma_1$, assume for the moment that  
\begin{align}
\lim_{m \rightarrow + \infty}\P\Big(\Big|\Sigma_1-\sum_{i=1}^m \E_{X_{\rho_{i-1}}}[\eta \un_{\eta \leq \varepsilon m}]\Big|>  m (\log m)^{1/2} \Big)=0 \label{neglpart}.
\end{align}
Let us compute:
\begin{align}
\E_{X_{\rho_{i-1}}}\left[\eta \un_{\eta \leq \varepsilon m }\right]& =\E_{X_{\rho_{i-1}}}\left[\eta \un_{\eta \leq (\log m)^{1/2}}\right]+\E_{X_{\rho_{i-1}}}\left[\eta \un_{(\log m)^{1/2}<\eta \leq \varepsilon m }\right], \label{Eq15}
\end{align}
the first sum is smaller than $(\log m)^{1/2}$. 
For the second one, we decompose : 
\begin{align}
\E_{X_{\rho_{i-1}}}\left[\eta \un_{(\log m)^{1/2}<\eta \leq \varepsilon m }\right] &= \sum_{ (\log m)^{1/2} < k \leq \varepsilon  m } \P_{X_{\rho_{i-1}}}(k<\eta\le \varepsilon m) (\un_{\oX_{\rho_{i-1}} \leq k^{1/2-\delta}} +  \un_{\oX_{\rho_{i-1}} > k^{1/2-\delta}})\nonumber\\
&=:\Sigma_3+ \Sigma_4. \label{Eq16}
\end{align}
In order to simplify the writing in the sequel, we introduce the following inequality: for $(a,b)$ such that $a<\alpha-1$ and $(\nicefrac{1}{2}-\delta)(\alpha-a-1)-b>1$, using Lemma \ref{lem410}:
\begin{align*}
\E[F(a,b)]&:=\E\Big[\sum_{ (\log m)^{1/2} < k \leq \varepsilon  m }k^b\oX_{\rho_{i-1}}^a\un_{\oX_{\rho_{i-1}} > k^{1/2-\delta}}\Big]
\le C_+ \sum_{ (\log m)^{1/2} < k \leq  \varepsilon m} k^{b} \sum_{r > k^{1/2-\delta} } r^{a- \alpha}\\
&\le  C_+ \sum_{ (\log m)^{1/2} < k \leq  \varepsilon m} k^{b+(a- \alpha+1)(\nicefrac{1}{2}-\delta)}\le C_+ (\log m)^{\frac{1}{2}(b+1+(a- \alpha+1)(\nicefrac{1}{2}-\delta))}.
\end{align*}
For $\Sigma_3$, we use Lemma \ref{MarcheSimpleTemps}: for any  $\oz \leq k^{1/2-\delta}$,  $\P_{z}( \eta =k ) \sim   8 \overline z/ \pi k^2$, so for large $m$,  
\begin{align}
\Sigma_3&= 
  (1+o(1)) \frac{8}{\pi}  \overline{X}_{\rho_{i-1}}   \sum_{ (\log m)^{1/2} < k \leq \varepsilon  m }\frac{1}{k} \un_{\oX_{\rho_{i-1}} \leq k^{1/2-\delta}} \nonumber 
\\
&= \frac{8}{\pi} \overline{X}_{\rho_{i-1}}  (1+o(1))  \left(\sum_{ (\log m)^{1/2} < k \leq \varepsilon  m } \frac{1}{k} - \sum_{ (\log m)^{1/2} < k \leq \varepsilon  m } \frac{1}{k}\un_{\oX_{\rho_{i-1}} > k^{1/2-\delta}}\right)  \nonumber \\
&= \frac{8}{\pi}  (\log m) \overline X_{\rho_{i-1}}  (1+o(1))  - \frac{8}{\pi}  {\overline X_{\rho_{i-1}}} (1+o(1))  \sum_{ (\log m)^{1/2} < k \leq  \varepsilon m}\frac{1}{k}\un_{\oX_{\rho_{i-1}} > k^{1/2-\delta}}. \label{Eq17}
\end{align}
Finally uniformly in $i\leq m$, as $\Sigma_4\le F(0,0)$:
\begin{align*}
\log m  \overline X_{\rho_{i-1}} (1+o(1))-   2F(1,-1)\le     \frac{\pi}{8}\E_{X_{\rho_{i-1}}}\left[\eta \un_{(\log m)^{1/2}<\eta \leq \varepsilon m }\right]\le \log m \overline X_{\rho_{i-1}} (1+o(1))+\frac{\pi}{8}F(0,0).
\end{align*}
Thanks to our choice of $\delta$, $\E[F(0,0)]$ and $\E[F(1,-1)]$ tend to zero, implying (using a Markov inequality) that
\begin{align}
\lim_{m \rightarrow +\infty} \P\Big(\sum_{i=1}^m \sum_{ (\log m)^{1/2} < k \leq \varepsilon  m }({\overline X_{\rho_{i-1}}} k^{-1}\un_{\oX_{\rho_{i-1}}> k^{1/2-\delta}}+\un_{\oX_{\rho_{i-1}}> k^{1/2-\delta}})>m \Big)=0. \label{Eq18}
\end{align}
One can notice that $i=1$ is a special case  as $X_{\rho_0}=(1,1)$ a.s. and that we should have made its own reasoning. However, as the estimates remain true with even simpler calculations, we have decided to not put it.\\
Collecting \eqref{Eq15}, \eqref{Eq16}
, \eqref{Eq17} and \eqref{Eq18} we obtain that, in probability, when $m$ tends to infinity 
 \begin{align*}
 \frac{1}{m \log m} \sum_{i=1}^m 
 \E_{X_{\rho_{i-1}}}[\eta \un_{\eta \leq \varepsilon m }] \sim \frac{8}{\pi m \log m }\sum_{i=1}^m  (\log m)  \overline X_{\rho_{i-1}}=\frac{8}{\pi m}\sum_{i=1}^m    \overline X_{\rho_{i-1}} .
 \end{align*}
 Then using that $(X_{\rho_{i}},i)$ is positive recurrent, the fact that its  invariant probability measure $(\pi^{\dag}(x),x)$ admits a first moment (see Lemma \ref{lemrec2}) and the Birkhoff ergodic Theorem, we obtain for large $m$, that in probability 
\begin{align*}
\frac{1}{m \log m }\sum_{i=1}^m \E_{X_{\rho_{i-1}}}[\eta \un_{\eta \leq \varepsilon m}]\sim  \frac{8}{\pi m}\sum_{i=1}^m    \overline X_{\rho_{i-1}}  \overset{\P.a.s.}{ \rightarrow }  \frac{8}{\pi}  \sum_{x} \overline x  \pi^{\dag}(x). 
\end{align*}
We deduce from that, in probability  
\begin{align*}
\lim_{m \rightarrow +\infty}\frac{1}{m \log m} \sum_{i=1}^m \E_{X_{\rho_{i-1}}}[\eta \un_{\eta \leq \varepsilon m}]= \frac{8}{\pi} \sum_{x} \overline x \pi^{\dag}(x). 
 \end{align*}
We are left to prove \eqref{neglpart},  like in Lemma \ref{lem3.2}  strong Markov property yields 
\begin{align}
& \E\Big[\Big( \Sigma_1-\sum_{i=1}^m\E_{X_{\rho_{i-1}}}[\eta \un_{\eta \leq \varepsilon m}]\Big)^2  \Big]  
\leq \sum_{i=1}^m  \E\left[ \E_{X_{\rho_{i-1}}}\left[\eta^2 \un_{\eta \leq \varepsilon m} \right] \right] \label{55}.
\end{align} 
Again using Lemma \ref{MarcheSimpleTemps}
\begin{align*}
 \E_{X_{\rho_{i-1}}}\left[\eta^2 \un_{\eta \leq \varepsilon m})\right]
 &
 \leq C_+ \sum_{k \leq \varepsilon m} k \P_{X_{\rho_{i-1}}}(\eta>k)\\ 
&\leq C_+ (\log m)^2+ \sum_{\log m \leq k \leq \varepsilon m} k \P_{X_{\rho_{i-1}}}(\eta>k)(\un_{\oX_{\rho_{i-1}} \leq k^{1/2-\delta}}+\un_{\oX_{\rho_{i-1}} > k^{1/2-\delta}}  )   \\ 
 & \leq  C_+ (\log m)^2+ C_+ \sum_{\log m \leq k \leq \varepsilon m}  \overline X_{\rho_{i-1}}\un_{\oX_{\rho_{i-1}} \leq k^{1/2-\delta}} + \varepsilon m F(0,0). 
 \end{align*}
 as by Lemma \ref{lem410} $\E[\overline X_{\rho_{i-1}}]\leq  c $ for any $i$, in particular, the mean of the sum in the above inequality is bounded by a constant times $\varepsilon m$. Thanks to our previous computations, we have that $\varepsilon m\E[F(0,0)]=o(m)$. 
Therefore  $\E\left[ \E_{X_{\rho_{i-1}}}\left[\eta^2 \un_{\eta \leq \varepsilon m}\right]\right] \leq C_+ m  \varepsilon$, so the second moment  \eqref{55} is smaller that $C_+ \varepsilon m^2$. Finally using (second moment) Markov inequality 
 in the probability in \eqref{neglpart}, yields \eqref{neglpart}. 
\end{proof}

\noindent The proof of Proposition \ref{prop31} then writes: by Corollary \ref{cor1} and Lemma \ref{lem3.3bis}, in probability \begin{align*} 
\lim_{k \rightarrow +\infty} \frac{\rho_k}{k \log k} = \lim_{k \rightarrow +\infty} \left( \frac{\sum_{i=1}^{k} \eta_i-\rho_{i-1}}{k \log k}+\frac{\sum_{i=1}^k \rho_i-\eta_{i}}{k \log k}  \right)= \lim_{k \rightarrow +\infty} \frac{\sum_{i=1}^k \eta_i-\rho_{i-1}}{k \log k}   =\frac{8}{\pi} \sum_{x} \overline x \pi^{\dag}(x),  
\end{align*}
 which yields the result by definition of $N_m$.

 \subsection{Proof of Theorem \ref{mainth}}
 By Propositions \ref{prop31}, for any $\varepsilon>0$ with probability converging to one $ \frac{\log n}{n}  \sum_{i=1}^{\lfloor  (1-\varepsilon)\cun n/\log n \rfloor }f( \mathscr{B}_i) \leq \frac{\log n}{n}  \sum_{i=1}^{N_n}f( \mathscr{B}_i) \leq \frac{\log n}{n}  \sum_{i=1}^{\lceil (1+\varepsilon) \cun n/\log n \rceil }f( \mathscr{B}_i) $ then, as $\lim_{t\rightarrow+\infty}\nicefrac{\lfloor (1-\varepsilon)\cun  t\rfloor}{t}=(1-\varepsilon)\cun  $, Proposition  \ref{prop1} gives  for the lower bound
 \begin{align*}
 \frac{\log n}{n}  \sum_{i=1}^{\lfloor \cun (1-\varepsilon) n/\log n \rfloor } f( \mathscr{B}_i)
\overset{\P}{ \rightarrow } (1-\varepsilon)\cun \sum_{x \in K^c} \pi^*(x)\E_x[f(\mathscr B_0)]=\cn^f(1-\varepsilon) .
\end{align*}
A similar result is true for the upper bound, taking the limit when $\varepsilon \rightarrow 0$ yields $\frac{\log n}{n}  \sum_{i=1}^{N_n}f( \mathscr{B}_i) \overset{\P}{ \rightarrow } \cn^f$. So we are left to prove that $\frac{\log n}{n}f( \mathscr{B}_n^* ) \overset{\P}{ \rightarrow } 0$. 
Let us give an upper bound for $\P(\frac{\log n}{n}f( \mathscr{B}_n^* )>n^{-\varepsilon/2})$: 
using successively Lemma \ref{simp3.1} implying $\P(\cup_{i=1}^n\{\oX_{\eta_{i}}> n^{1/2+2 \varepsilon}\})\le C_+ {n^{-4 \varepsilon}}$, and the increasing property of $f$ giving that $f( \mathscr{B}_n^* )\le \max_{1\le i\le n}f(\mathscr B_i)$: 
\begin{align}
\P\Big(\frac{\log n}{n}f( \mathscr{B}_n^* )>n^{-\varepsilon/2}\Big)
&\leq \P\Big(\frac{\log n}{n}f( \mathscr{B}_n^* )>n^{-\varepsilon/2},\cap_{i=1}^n\{\oX_{\eta_{i}}\leq n^{1/2+2 \varepsilon}\} \Big)+C_+ {n^{-4 \varepsilon}} \nonumber \\
& \leq \P\Big(\bigcup_{i=1}^n\Big\{\frac{\log n}{n}f(\mathscr{B}_i)>n^{-\varepsilon/2}, \oX_{\eta_{i}}\leq n^{1/2+2 \varepsilon} \Big \}\Big)+C_+ {n^{-4 \varepsilon}}\nonumber\\
&\leq n\max_{1\le i\le n}\P\Big(\frac{\log n}{n}f(\mathscr{B}_i)>n^{-\varepsilon/2}, \oX_{\eta_{i}}\leq n^{1/2+2 \varepsilon}\Big)+C_+ {n^{-4 \varepsilon}}\label{leq}.
\end{align}
To deal with this probability we first compare each random variable $f(\mathscr B_i)$ with $\E_{X_{\eta_i}}[f(\mathscr{B}_0)]$. In the one hand by strong Markov property, Tchebychev inequality and finally condition \eqref{cond1}
\begin{align}
& \P\Big( |f(\mathscr{B}_i)-\E_{X_{\eta_i}}[f(\mathscr{B}_0)]|>\frac{n^{1-\varepsilon}}{\log n}, \oX_{\eta_i} \leq n^{1/2+2 \varepsilon} \Big) \nonumber \\
&=\E\Big[\mathds{1}_{X_{\eta_i}\le n^{1/2+2\varepsilon}} \P_{X_{\eta_i}}\Big( |f(\mathscr{B}_0)-\E_{X_{\eta_i}}[f(\mathscr{B}_0)]|>\frac{n^{1-\varepsilon}}{\log n} \Big)\Big]\le \frac{(\log n)^2}{n^{2-2\varepsilon}}\E\Big[\mathds{1}_{X_{\eta_i}\le n^{1/2+2\varepsilon}} \mathbb{V}\mathrm{ar}_{X_{\eta_i}}[f(\mathscr B_0)]\Big]\nonumber\\
&\le C_2 \frac{(\log n)^2}{n^{2-2\varepsilon}}\E\Big[\mathds{1}_{X_{\eta_i}\le n^{1/2+2\varepsilon}}X_{\eta_i}^{2-\delta} \Big]
\leq C_2 \frac{ (\log n)^2}{n^{1-6\varepsilon+\nicefrac{\delta}{2}+2\delta\varepsilon}}\leq \frac{1}{n^{1-6 \varepsilon +\delta/2}},\label{ppro1}
\end{align}
recall indeed that $\delta$ introduced in condition \eqref{cond1} is given and $\varepsilon$ can be chosen as small as we want so in particular above probability converges to zero as long as $\delta/2 >6 \varepsilon$. On the other hand we have to control the sequence $(\E_{X_{\eta_i}}[f(\mathscr{B}_0)],i \leq n)$: using condition \eqref{cond2} 
and lemma \ref{newlem51} for $1<\beta<2$
\begin{align}
 \P\Big( \E_{X_{\eta_i}}[f(\mathscr{B}_0)]>\frac{n^{1-\varepsilon}}{ \log n}\Big)
 \leq   \P\Big( C_2 X_{\eta_i} >\frac{n^{1-\varepsilon}}{\log n}\Big) & \leq C\frac{(\log n)^{\beta}}{n^{\beta(1-\varepsilon)}} \E[X_{\eta_i}^{\beta}] \leq C\frac{(\log n)^{\beta}}{n^{\beta(1-\varepsilon)}}\label{ppro2}.
\end{align}
 Taking $\beta(1-\varepsilon)>1$ ensures that $n  \max_{i \leq n}   \P\Big( \E_{X_{\eta_i}}[f(\mathscr{B}_0)]>\frac{n^{1-\varepsilon}}{ \log n}\Big)$ converges to 0. Collecting \eqref{leq},\eqref{ppro1} and  \eqref{ppro2} 
 implies that $\lim_{n \rightarrow +\infty} \P\Big(\frac{\log n}{n}f( \mathscr{B}_n^* )>n^{-\varepsilon/2}\Big)=0$, which leads to the desired result. \hfill $\square$

 \section{Reversibility and  technical Lemmata for trajectories on the axis \label{sec4}}

In this section  we prove technical estimates for the part of the trajectory restricted to the axis, note that we  do not  need the condition $\alpha>3$ here, in fact most of the time $\alpha>1$ is enough so we mention the condition for $\alpha$ for each statement. For typographical simplicity, for any $x$ and $y$ in $K^c$, let $\P(x\rightarrow y)$ be the probability of the shortest path on $K^c$ to join $y$ from $x$. For instance, on figure \ref{shortpath}:
\begin{align*} \red{\P(x\rightarrow y)=\prod_{i=1}^{\ox} \left(1-\frac{3}{4i^\alpha}\right)\prod_{i=1}^{\oy-1}\frac{1}{4i^\alpha}.}
\end{align*}
Note that the expression of $\P(x\rightarrow y)$ has a useful consequence to compute probability of the form $\P_{\cdot}(X_{\rho}=\cdot)$. 
For example, one can see that there exists $0<C<1$ such that:
\begin{equation}\label{infcone}
    \P_x(X_\rho=(1,1))>C,\, \forall x\in K^c.
\end{equation}
Indeed, as $\alpha>1$, $\P(x \rightarrow (0,0))>\prod_{i=1}^{\infty}\left(1-\frac{3}{4i^\alpha}\right) :=a>0 $. Then:
\begin{align*}
\P_{x}\left(X_\rho=(1,1)\right)\ge  \P\left(x\rightarrow (0,0)\right)p((0,0),(0,1))p((0,1),(1,1))=\frac{a}{4^2}=:C>0.
\end{align*}

The first statement below treats about the reversibility of the random walk on $K^c$.

\begin{figure}
\includegraphics[scale=0.5]{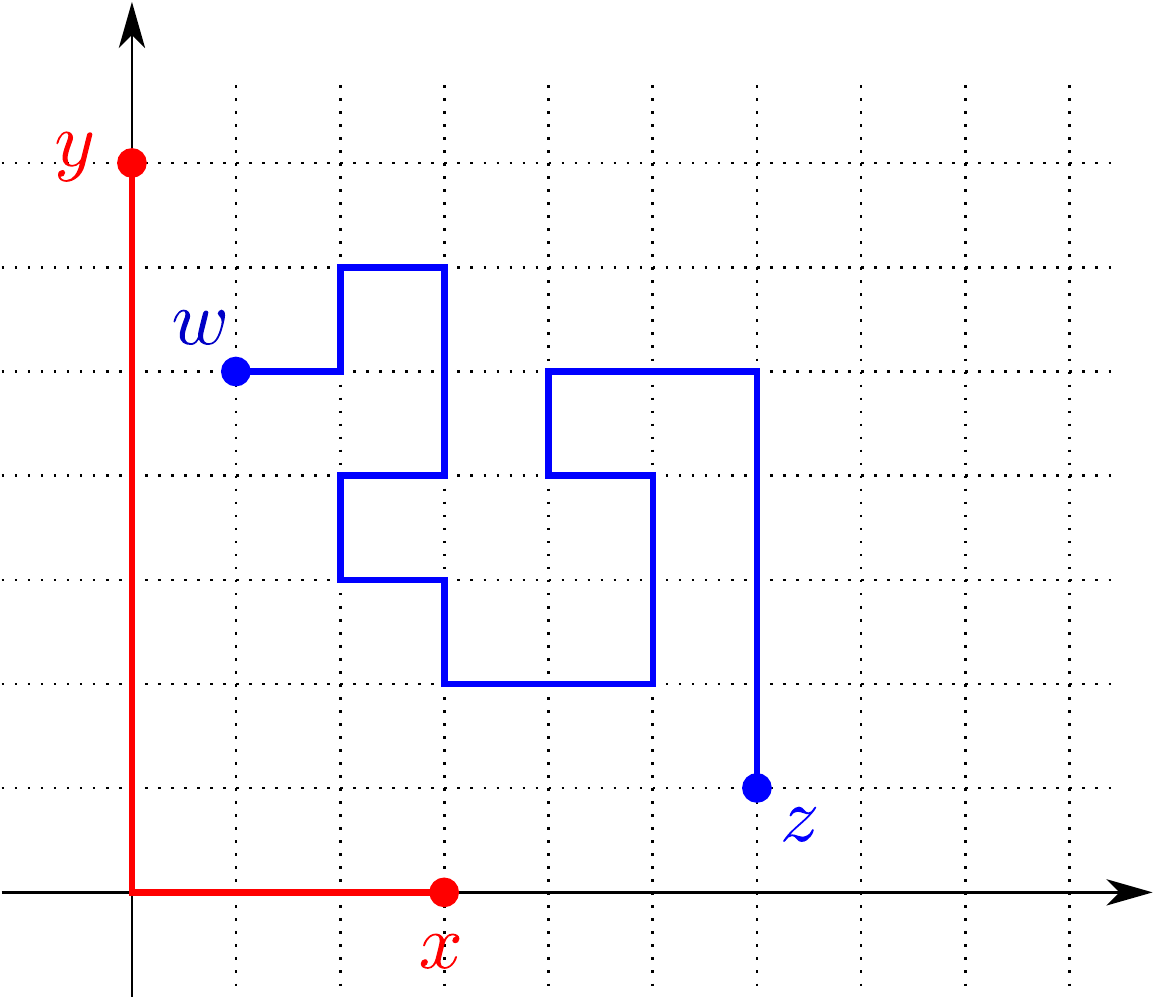}
\caption{}
\label{shortpath}
\end{figure}

\begin{lem} \label{lem2}
For all $x, y\in K^c$ such that $x+e_i, y+e_j\in K$:
\begin{align*}
\P_{x}(X_\rho=y+e_j)=\frac{\P(x\rightarrow y)}{\P(y\rightarrow x)}\frac{\ox^\alpha}{\oy^\alpha}\P_{y}(X_\rho=x+e_i).
\end{align*}
\end{lem}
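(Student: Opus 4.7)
The plan is to expand $\P_x(X_\rho = y + e_j)$ as a sum over all trajectories in $K^c$ that end with one exit jump, and then use a path-reversal bijection. Concretely, any contribution to $\P_x(X_\rho = y + e_j)$ comes from a path $\gamma = (x = z_0, z_1, \ldots, z_n = y)$ staying in $K^c$ followed by the single step $y \to y + e_j$, so
\begin{align*}
\P_x(X_\rho = y + e_j) = \frac{1}{4\oy^\alpha} \sum_{\gamma : x \to y \text{ in } K^c} \P(\gamma),
\end{align*}
and similarly $\P_y(X_\rho = x + e_i) = \frac{1}{4\ox^\alpha} \sum_{\gamma' : y \to x \text{ in } K^c} \P(\gamma')$. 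The ratio $\ox^\alpha / \oy^\alpha$ appearing in the statement is then exactly accounted for by these two exit-step probabilities, and what remains to prove is $\sum_\gamma \P(\gamma) = \frac{\P(x \to y)}{\P(y \to x)} \sum_{\gamma'} \P(\gamma')$.

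The main step is the reversal map $\gamma \mapsto \gamma^R := (z_n, \ldots, z_0)$, which is a bijection between paths $x \to y$ and paths $y \to x$ staying in $K^c$. Telescoping gives
\begin{align*}
\frac{\P(\gamma)}{\P(\gamma^R)} = \prod_{k=0}^{n-1} \frac{p(z_k, z_{k+1})}{p(z_{k+1}, z_k)} = \prod_e \left( \frac{p(e)}{p(\bar e)} \right)^{N(e, \gamma)},
\end{align*}
where the last product runs over unoriented edges of $K^c$ and $N(e, \gamma)$ denotes the net algebraic number of times $\gamma$ traverses $e$ (so that $N(e, \gamma^R) = -N(e, \gamma)$).

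The crucial observation is that $K^c$, being the union of four half-axes meeting at the origin, is a tree. Kirchhoff's conservation law then forces the net-flow vector $(N(e, \gamma))_e$, for any $\gamma$ from $x$ to $y$, to be supported precisely on the edges of the unique simple (i.e.\ shortest) path from $x$ to $y$, each with value $+1$ in the $x \to y$ orientation; this holds independently of the detour structure of $\gamma$. Hence
\begin{align*}
\frac{\P(\gamma)}{\P(\gamma^R)} = \frac{\P(x \to y)}{\P(y \to x)}
\end{align*}
path by path. Summing over $\gamma$ and combining with the two decompositions above yields the identity. The only mildly delicate point is the role of $(0,0)$, which is a degree-four internal vertex where the transition rules differ from the rest of the axis; but since the flow argument is purely topological and relies only on $K^c$ being a tree, no special treatment of the origin is needed.
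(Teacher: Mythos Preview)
Your proof is correct and follows essentially the same route as the paper: decompose according to the last step, use the path-reversal bijection on trajectories in $K^c$, and show that $\P(\gamma)/\P(\gamma^R)$ depends only on the endpoints and equals $\P(x\to y)/\P(y\to x)$. The paper states this last fact more tersely (factoring out $\P(x\to y)$ and asserting the remainder $A_\Gamma$ is reversal-invariant), whereas you justify it more explicitly via the tree structure of $K^c$ and the net-flow argument; but these are the same idea.
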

\begin{proof}
First we prove that for all $x,y\in K^c$ and all $n\in\mathbb N^*$:
\begin{equation}\label{inversion}
\P_x(X_n=y, n<\rho)=\frac{\P(x\rightarrow y)}{\P(y\rightarrow x)}\P_y(X_n=x, n<\rho).
\end{equation}
This fact is a simple consequence of the reversibility of random walk ${\bf X}$. 
To prove \eqref{inversion}, take a path $\Gamma$ from $x$ to $y$ on $K^c$ of length $n$, $\Gamma:=(x_0=x,\,x_1,\,\dots,\, x_{n-1}, x_n=y)$. Its probability is
\[\P_x(X_1=x_1,\dots, X_n=y)=\prod_{i=0}^{n-1}p(x_i,x_{i+1})=\P(x\rightarrow y) A_{\Gamma}\]
and note that $A_{\Gamma}$ is a product such that if $p(x_i, x_{i+1})$ appears in $A_{\Gamma}$ there is also necessarily $j\ne i$, such that $p(x_j, x_{j+1})=p(x_{i+1}, x_i)$.\\ 
If we reverse the path (taking $i\rightarrow n-i$), we obtain similarly
\[\P_y(X_1=x_{n-1},\dots, X_{n-1}=x_1,X_n=x)=\prod_{i=0}^{n-1}p(x_{i+1},x_i)=\P(y\rightarrow x)A_{\Gamma}\]
as the reversion does not change the value of $A_{\Gamma}$.\\
As a result summing on all path $\Gamma$ of length $n$ from $x$ to $y$:
\begin{align*}
\P_x(X_n=y, n<\rho)&=\sum_{\Gamma}\prod_{i=0}^{n-1}p(x_i,x_{i+1})=\P(x\rightarrow y)\sum_{\Gamma} A_{\Gamma}=\frac{\P(x\rightarrow y)}{\P(y\rightarrow x)}\sum_{\Gamma} \P(y\rightarrow x)A_{\Gamma} \\
&=\frac{\P(x\rightarrow y)}{\P(y\rightarrow x)}\sum_{\Gamma} \prod_{i=0}^{n-1}p(x_{i+1},x_i)=\frac{\P(x\rightarrow y)}{\P(y\rightarrow x)}\P_y(X_n=x, n<\rho).
\end{align*}
Now, the result of the lemma follows taking $x, y\in K^c$ such that $x+e_i, y+e_j\in K$:
\begin{align*}
\P_{x}(X_\rho=y+e_j)&=\sum_{n\ge 0}\P_{x}(X_n=y,n<\rho, X_{n+1}=y+e_j)= \sum_{n\ge 0}\P_{x}(X_n=y,n<\rho)p(y,y+e_j)\\
&=\frac{\P(x\rightarrow y)}{\P(y\rightarrow x)}p(y,y+e_j) \sum_{n\ge 0}\P_{y}(X_n=x,n<\rho)\\
&=\frac{\P(x\rightarrow y)}{\P(y\rightarrow x)}\frac{p(y,y+e_j)}{p(x,x+e_i)}\P_{y}(X_\rho=x+e_i)=\frac{\P(x\rightarrow y)}{\P(y\rightarrow x)}\frac{\ox^\alpha}{\oy^\alpha}\P_{y}(X_\rho=x+e_i).
\end{align*}
\end{proof}

\begin{rem}\label{reminver}
There is a counterpart of the precedent result on $K$: For all $w,z\in K$ and all $n\in\mathbb N^*$ such that $w+e_i, z+e_j\in K^c$
\begin{equation}\label{inversioncool}
\P_{w}\left(X_\eta=z+e_j\right)=\P_{z}\left(X_\eta=w+e_i\right). 
\end{equation}
The proof of \eqref{inversioncool} is very similar to \eqref{inversion} and is left to the reader (see Figure \ref{shortpath}).
\end{rem}

The lemma below gives an asymptotic of the distribution of the exit coordinate from the axis.  We  use the following notation: for any $x\in \mathbb Z^2$, let $(T_x^k)_{k\ge 0}$ be the sequence defined by $T_x^0=0$ and for all $k\in\mathbb N^*$:
\[T_x^k=\inf\lbrace k>T_x^{k-1}, X_k=x\rbrace,\] 
for simplicity we write $T_x$ instead of $T^1_x$. 
\begin{lem}\label{exitK} Assume $\alpha>1$, there exists $c_+ > c_->0 $ such that for all $i>1$
\begin{equation} \label{simpler?}
1+c_- i^{-\alpha} \le 4i^\alpha \P_{(0,i)}(X_\rho=(1,i))\le 1+c_+ i^{-\alpha}.
\end{equation} 
\end{lem}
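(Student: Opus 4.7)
The plan reduces \eqref{simpler?} to an estimate on the return probability of the walk to $(0,i)$ before leaving $K^c$. Since for $i>1$ the only $K^c$-neighbour of $(1,i)$ is $(0,i)$, any exit at $(1,i)$ must occur through a single step from $(0,i)$, which has probability $1/(4i^\alpha)$. Summing over the visits to $(0,i)$ before $\rho$ gives the key identity
\[
\P_{(0,i)}(X_\rho=(1,i))=\frac{1}{4i^\alpha}\,\E_{(0,i)}[\mathscr{L}((0,i),\rho)]=\frac{1}{4i^\alpha(1-\tilde q_i)},
\]
where $\tilde q_i:=\P_{(0,i)}(T_{(0,i)}<\rho)$; the last equality uses that the number of visits to $(0,i)$ under $\P_{(0,i)}$ before $\rho$ is geometric with success parameter $1-\tilde q_i$. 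Hence \eqref{simpler?} is equivalent to $\tilde q_i\asymp i^{-\alpha}$ with matching constants, via $1/(1-x)=1+x+O(x^2)$.

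First-step analysis at $(0,i)$ gives the decomposition
\[
\tilde q_i=\frac{q_+(i)}{4i^\alpha}+\Big(1-\frac{3}{4i^\alpha}\Big)q_-(i),\qquad q_\pm(i):=\P_{(0,i\pm 1)}(T_{(0,i)}<\rho).
\]
The upward term is trivial: from $(0,i+1)$ the walk jumps directly to $(0,i)$ in one step with probability $1-3/(4(i+1)^\alpha)\ge 1/2$, so $q_+(i)\in[1/2,1]$.

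The main work is the bound $q_-(i)\asymp i^{-\alpha}$. Setting $f(j):=\P_{(0,j)}(T_{(0,j+1)}<\rho)$, one has $q_-(i)=f(i-1)$, and since any axis path from $(0,j-1)$ to $(0,j+1)$ must visit $(0,j)$, the strong Markov property at that first visit yields the recursion
\[
f(j)=\frac{1/(4j^\alpha)}{1-(1-3/(4j^\alpha))\,f(j-1)},\qquad j\ge 1,
\]
with base case $f(0)\le 1/3$, computed directly at the origin, where half of the four equiprobable transitions exit $K^c$ immediately. A one-line induction then yields $f(j)\le 1/(2j^\alpha)\le 1/2$: assuming $f(j-1)\le 1/2$, the denominator is at least $1/2+3/(8j^\alpha)\ge 1/2$, so $f(j)\le 1/(2j^\alpha)$. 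The matching lower bound $f(j)\ge 1/(4j^\alpha)$ is immediate by dropping the positive correction in the denominator.

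Plugging these bounds into the first-step formula gives $c_-\, i^{-\alpha}\le \tilde q_i\le c_+'\, i^{-\alpha}$ uniformly in $i\ge 2$, and expanding $1/(1-\tilde q_i)$ yields \eqref{simpler?}. The only mildly delicate point is carrying the induction on $f$ through with a uniform constant; it works cleanly thanks to the strong downward drift on the axis, which makes a round trip from $(0,j-1)$ to $(0,j)$ and back cost a factor of order $j^{-\alpha}$.
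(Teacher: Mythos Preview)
Your reduction to the return probability $\tilde q_i=\P_{(0,i)}(T_{(0,i)}<\rho)$ via the geometric identity
\[
4i^\alpha\,\P_{(0,i)}(X_\rho=(1,i))=\frac{1}{1-\tilde q_i}
\]
is exactly the paper's starting point (their $h(i)$ is your $\tilde q_i$). From there the paper expands $h(i)$ twice by the Markov property to extract two-sided bounds of order $i^{-\alpha}$, whereas you set up the one-step ``gambler's ruin'' quantities $f(j)=\P_{(0,j)}(T_{(0,j+1)}<\rho)$, derive the exact recursion
\[
f(j)=\frac{1/(4j^\alpha)}{1-(1-3/(4j^\alpha))\,f(j-1)},
\]
and close it by induction. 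Both routes are short and lead to $\tilde q_i\asymp i^{-\alpha}$; yours is slightly cleaner in that the recursion makes the uniformity in $i$ automatic, while the paper's two-step expansion gives the constants more explicitly.

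There is, however, a concrete slip in your base case. From the origin $(0,0)$ \emph{all four} neighbours $(0,\pm1),(\pm1,0)$ lie on the axes and hence in $K^c$; none of the transitions ``exit $K^c$ immediately'', so your computation of $f(0)\le 1/3$ is not valid as written. Fortunately the induction does not need a sharp value of $f(0)$: using only the trivial bound $f(0)\le 1$ in the recursion at $j=1$ (where $1-3/4=1/4$) already gives
\[
f(1)=\frac{1/4}{1-\tfrac14 f(0)}\le \frac{1/4}{3/4}=\frac13\le\frac12,
\]
and your induction then runs unchanged for $j\ge 2$. With this fix the argument is correct.
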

\begin{proof}
Using the strong Markov property:
\begin{align}
\P_{(0,i)}(X_\rho=(1,i))
&=\sum_{k\ge0}\P_{(0,i)}\left(T_{(0,i)}<\rho\right)^k\P_{(0,i)}\left(\rho<T_{(0,i)},X_\rho=(1,i)\right)\nonumber\\
&=p((0,i),(1,i))\sum_{k\ge0}\P_{(0,i)}\left(T_{(0,i)}<\rho\right)^k \nonumber \\
& = \frac{1}{4 i^{\alpha}}\frac{1}{1-\P_{(0,i)}\left(T_{(0,i)}<\rho\right)}=:\frac{1}{4 i^{\alpha}} \frac{1}{1-h(i)}. \label{all}
\end{align}
In order to obtain a lower bound for $h(i)$, we apply the Markov property several times: 
\begin{align}
h(i)&=\frac{1}{4i^\alpha}h(i+1)+\left(1-\frac{3}{4i^\alpha}\right)h(i-1)\label{upper1}\\
&=\frac{1}{4i^\alpha}\left(1-\frac{3}{4(i+1)^\alpha}+\frac{1}{4(i+1)^\alpha}h(i+2)\right)+\left(1-\frac{3}{4i^\alpha}\right)\left(\frac{1}{4(i-1)^\alpha}+\left(1-\frac{3}{4(i-1)^\alpha}\right)h(i-2)\right)\label{bound}\\
&\ge\frac{1}{4i^\alpha}\left(1-\frac{3}{4(i+1)^\alpha}\right)+\left(1-\frac{3}{4i^\alpha}\right)\frac{1}{4(i-1)^\alpha} \geq  \frac{c_-}{i^\alpha} \nonumber. 
\end{align}
The upper bound is also obtained from \eqref{upper1}: first $h(i+1)$ is simply bounded from above by $1-\frac{1}{2(i+1)^\alpha}$ using \eqref{bound}. We treat $h(i-1)$ separately with a similar reasoning as the one to obtain \eqref{all} and taking a particular trajectory:
\begin{align*}
h(i-1)&=\frac{p((0,i-1),(0,i))}{1-\P_{(0,i-1)}\left(T_{(0,i-1)}<T_{(0,i)}\wedge \rho\right)}=\frac{1}{4(i-1)^\alpha\P_{(0,i-1)}\left(T_{(0,i-1)}>T_{(0,i)}\wedge \rho\right)}\nonumber\\
&\le \frac{1}{4(i-1)^\alpha\P_{(0,i-1)}\left(T_{(0,i-1)}> \rho, X_\rho\in\lbrace (-1,1);(1,1)\rbrace\right)}\nonumber\\
&\le\frac{1}{4(i-1)^\alpha}\frac{1}{\frac{1}{2}\prod_{k=2}^{i-1}\left(1-\frac{3}{4k^\alpha}\right)}, 
\end{align*}
 then 
\[h(i)\le \frac{1}{4i^\alpha}\left(1-\frac{1}{2(i+1)^\alpha}\right)+\frac{1}{4(i-1)^\alpha}\frac{1}{\frac{1}{2}\prod_{k=2}^{i-1}\left(1-\frac{3}{4k^\alpha}\right)},
\]
and as $\alpha>1$, $\prod_{k=2}^{i-1}\left(1-\frac{3}{4k^\alpha}\right)$ is strictly positive constant so we get the upper bound. 
\end{proof}
\noindent The following Corollary is a consequence of Lemma \ref{exitK}. Recall that for $i>0$,  $L_{(0,i)}=\{z= (0,z_2),z_2 \geq i \}.$

\begin{cor} \label{cor4.6}
For $\alpha>1$, all $z\in K^c$ and $i>0$:
\begin{equation}\label{easypeasy}
\P(z\rightarrow(0,i))\le4i^\alpha\P_z(X_\rho=(1,i))\le  1+c_+ i^{-\alpha}.
\end{equation}
Moreover, 
\begin{enumerate}
\item if $z\in L_{(0,i)}$: 
\begin{equation}
\lim_{i\rightarrow+\infty}4i^\alpha\P_z(X_\rho=(1,i))=1 ;
\end{equation}
\item there exists $C^\prime>0$ such that for all $z\notin L_{(0,i)}$:
\begin{equation}
\P_{z}(X_{\rho}=(1,i))\le \frac{C^\prime}{i^{2\alpha}}.
\end{equation}
\item there exists $0<\tilde C<1$ such that for all $z\in K^c$, $\P_z(T_z<\rho)\le \tilde C.$
\end{enumerate}
\end{cor}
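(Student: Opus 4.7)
The plan is to reduce every claim to Lemma~\ref{exitK} via one factorization. Because $(0,i)$ is the unique $K^c$-neighbor of $(1,i)$, every trajectory with $X_\rho=(1,i)$ must pass through $(0,i)$ at time $\rho-1$, so for $z\ne(0,i)$ the strong Markov property at $T_{(0,i)}$ gives
\begin{equation*}
\P_z(X_\rho=(1,i)) \;=\; \P_z(T_{(0,i)}<\rho)\,\P_{(0,i)}(X_\rho=(1,i)),
\end{equation*}
while for $z=(0,i)$ Lemma~\ref{exitK} already controls $\P_{(0,i)}(X_\rho=(1,i))$ directly. In both cases the second factor is $(1+O(i^{-\alpha}))/(4i^\alpha)$, so everything reduces to bounding $\P_z(T_{(0,i)}<\rho)$. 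The displayed \eqref{easypeasy} is then immediate: the upper bound uses $\P_z(T_{(0,i)}<\rho)\le 1$, while the lower bound is obtained by keeping only the deterministic path $z\to(0,i)\to(1,i)$ of probability $\P(z\to(0,i))/(4i^\alpha)$.

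For item~(1), I fix $z=(0,z_2)$ with $z_2\ge i$ and telescope through intermediate levels via strong Markov at $T_{(0,z_2-1)},\ldots,T_{(0,i)}$:
\begin{equation*}
\P_{(0,z_2)}(T_{(0,i)}<\rho) \;=\; \prod_{k=i+1}^{z_2}\P_{(0,k)}(T_{(0,k-1)}<\rho)\;\ge\;\prod_{k=i+1}^{z_2}\Bigl(1-\tfrac{3}{4k^\alpha}\Bigr),
\end{equation*}
each factor being bounded below by the one-step probability to move down. The infinite product $\prod_{k>i}(1-3/(4k^\alpha))$ converges to $1$ as $i\to\infty$ (using $\alpha>1$), uniformly in $z_2\ge i$, and together with Lemma~\ref{exitK} this proves item~(1).

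For item~(2), since $z\notin L_{(0,i)}$ the walk must visit $(0,i-1)$ before ever reaching $(0,i)$, so $\P_z(T_{(0,i)}<\rho)\le \P_{(0,i-1)}(T_{(0,i)}<\rho)=:q_{i-1}$. To estimate $q_{i-1}$ I apply Lemma~\ref{lem2} with $x=(0,i-1)$, $y=(0,i)$: this expresses $\P_{(0,i-1)}(X_\rho=(1,i))$ as the product of an explicit ratio of short-path probabilities (of order $i^{-\alpha}$) and $\P_{(0,i)}(X_\rho=(1,i-1))$, itself $O(i^{-\alpha})$ by the initial factorization applied at $(0,i-1)$ together with Lemma~\ref{exitK}. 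Hence $\P_{(0,i-1)}(X_\rho=(1,i))=O(i^{-2\alpha})$; reinjecting into the factorization at $(0,i)$ yields $q_{i-1}=O(i^{-\alpha})$ and therefore $\P_z(X_\rho=(1,i))\le C^\prime/i^{2\alpha}$.

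For item~(3) with $z=(0,i)$, $i\ge 1$, the renewal decomposition used in the proof of Lemma~\ref{exitK} gives $\P_{(0,i)}(X_\rho=(1,i))=1/(4i^\alpha(1-h(i)))$ with $h(i)=\P_{(0,i)}(T_{(0,i)}<\rho)$; combined with the Lemma's upper bound this yields $h(i)\le c_+/(i^\alpha+c_+)\le c_+/(1+c_+)<1$ uniformly in $i$. The boundary case $z=(0,0)$ is a two-step computation: from $(0,0)$ the walk moves to a level-$1$ axis position, from which the next step exits $K^c$ with probability $1/2$, so $\P_{(0,0)}(\rho<T_{(0,0)})\ge 1/2$. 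The four-fold symmetry of the dynamics extends all bounds to the other axes. The main technical point throughout is the uniformity in $z$, which the strong-Markov telescoping in~(1) and the $z$-independence of $q_{i-1}$ in~(2) handle essentially automatically.
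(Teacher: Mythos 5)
Your proposal is correct and, for \eqref{easypeasy} and item (1), follows essentially the paper's route: the same factorization $\P_z(X_\rho=(1,i))=\P_z(T_{(0,i)}<\rho)\,\P_{(0,i)}(X_\rho=(1,i))$ combined with Lemma~\ref{exitK}, the paper reading the lower bound off $\P(z\rightarrow(0,i))\le \P_z(T_{(0,i)}<\rho)$ and, for (1), identifying $\P(z\rightarrow(0,i))$ with the tail of the convergent product --- which is your telescoping in disguise. You genuinely diverge on items (2) and (3). For (2), the paper controls $\P_{(0,i-1)}(T_{(0,i)}<\rho)$ by the symmetry between the step up to $(0,i)$ and the step out to $(1,i-1)$ (each of probability $\tfrac{1}{4(i-1)^\alpha}$ per visit), which gives $\P_{(0,i-1)}(T_{(0,i)}<\rho)\le \P_{(0,i-1)}(X_\rho=(1,i-1))=O(i^{-\alpha})$; you instead invoke the reversibility Lemma~\ref{lem2} to get $\P_{(0,i-1)}(X_\rho=(1,i))=O(i^{-2\alpha})$ and divide by the lower bound on $\P_{(0,i)}(X_\rho=(1,i))$ to recover $q_{i-1}=O(i^{-\alpha})$ --- correct, and in fact you could stop one step earlier, since $\P_z(X_\rho=(1,i))\le q_{i-1}\P_{(0,i)}(X_\rho=(1,i))=\P_{(0,i-1)}(X_\rho=(1,i))$ already gives the claim. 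For (3), the paper uses the one-step decomposition \eqref{upper1} together with the same symmetry trick, whereas you read $h(i)\le c_+/(i^\alpha+c_+)$ directly off the renewal identity \eqref{all} and the upper bound of Lemma~\ref{exitK}; this is arguably cleaner, and your explicit two-step estimate at $(0,0)$ makes precise a point the paper leaves implicit in its definition of $\tilde C$. One caveat, largely shared with the paper: the claim that $(0,i)$ is the unique $K^c$-neighbour of $(1,i)$ fails for $i=1$, and Lemma~\ref{exitK} is stated only for $i>1$, so the factorization, the bound $h(i)\le c_+/(1+c_+)$ and the constants in \eqref{easypeasy} and item (2) require the finitely many small values of $i$ to be treated by a direct one-step estimate and an enlargement of constants, exactly in the spirit of your computation at $(0,0)$.
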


\begin{proof}
Using the strong Markov property:
\begin{align}\label{easyMarkov}
    \P_z(X_\rho=(1,i))=\P_z(T_{(0,i)}<\rho, X_\rho=(1,i))=\P_z(T_{(0,i)}<\rho)\P_{(0,i)}( X_\rho=(1,i)),
\end{align}
which implies \eqref{easypeasy} using Lemma \ref{exitK} and the fact that $\P(z\rightarrow(0,i)) \le \P_z(T_{(0,i)}<\rho)$.
\begin{enumerate}
\item This formula is direct as 
\[\P(z\rightarrow(0,i)) =\prod_{k=i+1}^{z}\left(1-\frac{3}{4k^\alpha}\right)\ge \prod_{k=i+1}^{\infty}\left(1-\frac{3}{4k^\alpha}\right)=:R_i\underset{i\rightarrow+\infty }{\longrightarrow}1,\]
as $R_i$ is the rest of a convergent infinite product.
\item
With \eqref{easyMarkov}, the inequality $\P_z(T_{(0,i)}<\rho)\le \P_{{(0,i-1)}}(T_{(0,i)}<\rho)$ and an obvious symmetry:
\begin{align*}
\P_{{(0,i-1)}}(X_{\rho}=(1,i-1), T_{(0,i)}>\rho)
\le \P_{{(0,i-1)}}(X_{\rho}=(1,i-1)),
\end{align*}
and \eqref{easypeasy} implies the existence of $C^\prime>0$ such that for all positive $i$, $C^\prime\ge 1+c_+ i^{-\alpha}$.
\item We can take $z=(0,i)$ for $i\ge 1$ without loss of generality. Using \eqref{upper1}, the same symmetry as the previous point and \eqref{easypeasy}:
\begin{align*}
\P_{(0,i)}(T_{(0,i)}<\rho)\le \frac{1}{4i^\alpha}+\P_{(0,i-1)}(T_{(0,i)}<\rho) \le  \frac{1}{4i^\alpha}+\P_{(0,i-1)}(X_{\rho}=(1,i-1))\le \frac{1}{4i^\alpha}\left(2+\frac{c_+}{i^\alpha}\right).
\end{align*}
As a result, we can find $N>0$, such that for all $i\ge N$, $\P_{(0,i)}(T_{(0,i)}<\rho)\le \frac{1}{2}$, and taking $\tilde C:=\max(\max_{0\le i<N} \P_{(0,i)}(T_{(0,i)}<\rho) ,\frac{1}{2})$, we have the claimed result.
\end{enumerate}
\end{proof}

\begin{cor}\label{mean}
Let $\beta>0$ such that $\alpha-\beta>1$, there exists $M>0$ such that: 
\begin{equation}
\forall z\in K^c, \E_z\left[\overline{X}_\rho^\beta\right]\le M. 
\end{equation}
\end{cor}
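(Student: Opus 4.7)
The plan is to decompose $\E_z[\oX_\rho^\beta]$ as a sum over the possible exit locations of the walk and to bound each contribution using the tail estimate of Corollary \ref{cor4.6}. First I would observe that, since $\mathbf X$ moves by unit steps and $K^c$ is the union of the two axes, the first hit $X_\rho$ of $K$ must lie on $\partial K$, i.e., at one of the at most eight points of the form $(\pm 1, \pm j)$ or $(\pm j, \pm 1)$ for some integer $j \geq 1$; in all these cases $\oX_\rho = j$. Writing
\[\E_z\bigl[\oX_\rho^\beta\bigr] = \sum_{j \geq 1} j^\beta\, \P_z(\oX_\rho = j),\]
it is enough to bound $\P_z(\oX_\rho = j)$ uniformly in $z$ by a sum of at most eight exit probabilities of the above type.

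Each such probability is controlled by \eqref{easypeasy}: uniformly in $z \in K^c$,
\[\P_z(X_\rho = (1,j)) \leq \frac{1 + c_+ j^{-\alpha}}{4 j^\alpha} \leq \frac{C}{j^\alpha}\]
for some constant $C$ independent of $z$ and of $j \geq 1$. The walk $\mathbf X$ is invariant under the symmetries of the square (reflections in the two axes and in the two diagonals), each of which preserves $K$, $K^c$ and hence the stopping time $\rho$. Pushing $z$ through the appropriate element of this symmetry group and reapplying \eqref{easypeasy} at the transformed starting point yields the same bound $C/j^\alpha$ for each of the (at most) eight candidate exit locations, still uniformly in the starting point $z$.

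Summing over $j$,
\[\E_z\bigl[\oX_\rho^\beta\bigr] \leq 8 C \sum_{j \geq 1} j^{\beta - \alpha} =: M,\]
which is finite precisely under the hypothesis $\alpha - \beta > 1$, and the constant $M$ does not depend on $z \in K^c$, giving the claim. The only step that demands any care is the symmetry argument used to transfer the one-sided bound \eqref{easypeasy} to the other seven exit locations uniformly in $z$, but the $D_4$-invariance of the transition kernel makes this immediate, so no real obstacle arises.
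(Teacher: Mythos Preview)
Your proof is correct and follows essentially the same approach as the paper: both decompose $\E_z[\oX_\rho^\beta]$ as $\sum_j j^\beta\P_z(\oX_\rho=j)$, reduce to a bounded number of exit locations by symmetry, bound each by $C j^{-\alpha}$ via \eqref{easypeasy}, and sum using $\alpha-\beta>1$. The only cosmetic difference is that the paper fixes $z=(0,i)$ and writes out the Markov factorization through $T_{(0,j)}$ explicitly, whereas you invoke \eqref{easypeasy} and the $D_4$-invariance directly; both routes are equivalent.
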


\begin{proof}

Without loss of generality, we can assume that $z=(0,i)$ with $i\ge1$. Using the symmetry of the problem and \eqref{simpler?}, for $j>2$:
\begin{align*}
\P_{(0,i)}\left(\oX_{\rho}=j\right)&=2\P_{(0,i)}\left(X_{\rho}=(1,j)\right)+6\P_{(0,i)}\left(X_{\rho}=(j,1)\right)\\
&=2\P_{(0,i)}\left(T_{(0,j)}< \rho\right)\P_{(0,j)}\left(X_{\rho}=(1,j)\right)+6\P_{(0,i)}\left(T_{(j,0)}< \rho\right)\P_{(j,0)}\left(X_{\rho}=(j,1)\right)\\
&\le 8\P_{(0,j)}\left(X_{\rho}=(1,j)\right)\le C_+j^{-\alpha}.
\end{align*}
Consequently: 
\begin{align*}
 \E_{(0,i)}\left[\overline{X}_\rho^\beta\right]&=\sum_{j\ge 1}\P_{(0,i)}\left(\oX_{\rho}=j\right)j^{\beta}=\P_{(0,i)}\left(\oX_{\rho}=1\right)+\sum_{j\ge 2}\P_{(0,i)}\left(\oX_{\rho}=j\right)j^{\beta}\le 1+C_+\sum_{j\ge 2}j^{\beta-\alpha}=M.
 \end{align*}
\end{proof}
\noindent The last Lemma is a simple consequence of Corollary \ref{mean}  and Corollary \ref{cor4.6}
\begin{lem}\label{lem410} Assume $\alpha>2$, there exists $c$ and $c'$  such that for any $i$ and $x \in K$  
\begin{equation}
\E[\oX_{\rho_i}] \leq c \textrm{ and } \P(X_{\rho_i}=x) \leq c'/\ox^{\alpha}.
\end{equation} 
\end{lem}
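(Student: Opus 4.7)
The plan is to reduce both estimates, via the strong Markov property at the stopping time $\eta_i$, to uniform-in-starting-point bounds on the first moment and one-point mass of $X_\rho$ that have already been established in Corollary \ref{mean} and Corollary \ref{cor4.6}. Note first that the case $i=0$ is trivial because $X_{\rho_0}=(1,1)$ a.s., so fix $i\ge 1$. Since $X_{\eta_i}\in K^c$ and the next entrance into $K$ coincides with the exit time $\rho$ of the walk started from $X_{\eta_i}$, the strong Markov property gives, for any $x\in K$,
\begin{align*}
\P(X_{\rho_i}=x) &= \sum_{y\in K^c}\P(X_{\eta_i}=y)\,\P_y(X_\rho=x),\\
\E[\oX_{\rho_i}] &= \sum_{y\in K^c}\P(X_{\eta_i}=y)\,\E_y[\oX_\rho].
\end{align*}

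For the moment bound, the hypothesis $\alpha>2$ allows the application of Corollary \ref{mean} with $\beta=1$, which yields a constant $M>0$ such that $\E_y[\oX_\rho]\le M$ for every $y\in K^c$. Plugging this uniform bound into the decomposition above and using $\sum_{y\in K^c}\P(X_{\eta_i}=y)=1$ immediately gives $\E[\oX_{\rho_i}]\le M=:c$, independently of $i$.

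For the tail estimate, observe that on the event $\{X_{\rho_i}=x\}$ the point $x$ necessarily lies in $\partial K$, i.e.\ at distance $1$ from $K^c$. By the symmetries of the walk it suffices to treat the case $x=(1,j)$ with $j\ge 1$, so $\ox=\max(1,j)$. Inequality \eqref{easypeasy} of Corollary \ref{cor4.6} furnishes a constant $c_+>0$ such that
\[
\P_y\bigl(X_\rho=(1,j)\bigr)\ \le\ \frac{1+c_+\,j^{-\alpha}}{4\,j^\alpha}\ \le\ \frac{c'}{\ox^{\alpha}}
\]
uniformly in $y\in K^c$. Summing against $\P(X_{\eta_i}=y)$ yields $\P(X_{\rho_i}=x)\le c'/\ox^\alpha$, as claimed.

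There is no real obstacle here: the statement is essentially a packaging of the uniform bounds of Corollaries \ref{mean} and \ref{cor4.6}, and the only point deserving attention is that both bounds are uniform in the starting point $y\in K^c$, which is precisely what allows one to absorb the law of $X_{\eta_i}$ and obtain estimates that are uniform in the index $i$.
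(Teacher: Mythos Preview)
Your proof is correct and follows the same approach as the paper: apply the strong Markov property at $\eta_i$ to write $\E[\oX_{\rho_i}]=\E[\E_{X_{\eta_i}}[\oX_\rho]]$ and $\P(X_{\rho_i}=x)=\E[\P_{X_{\eta_i}}(X_\rho=x)]$, then invoke the uniform bounds of Corollary~\ref{mean} (with $\beta=1$, using $\alpha>2$) and \eqref{easypeasy} from Corollary~\ref{cor4.6}. The paper's proof is the same argument written in two lines; your version simply spells out the decomposition and the choice of constant more explicitly.
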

\begin{proof}
According to Corollary \ref{mean}, there exists $M>0$ such that for any 
$\E[\overline{X}_{\rho_i}] =\E[\E_{X_{\eta_i}}[\oX_{\rho}]]\le M$.\\
Similarly, writing $\P(X_{\rho_i}=x)=\E[\P_{X_{\eta_i}}(X_{\rho}=x)]$, one can obtain the second inequality with \eqref{easypeasy}.
\end{proof}

\begin{lem} \label{lem6.1} Assume $\alpha>1$.  For any $a,r>0$ and $m$ large enough 
\begin{align}
\P_{(0,a)}(\rho > m ) \leq m^{-r}. \label{eq34}
\end{align}
\end{lem}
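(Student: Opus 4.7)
The plan is to prove the stronger statement that $\P_{(0,a)}(\rho > m)$ decays exponentially in $m$, which immediately yields the $m^{-r}$ bound for every $r>0$ and $m$ large. The strategy combines a Lyapunov estimate that forces the walk back to the origin with a geometric escape estimate from the origin, glued by a strong Markov decomposition at the first visit to $(0,0)$.

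For the Lyapunov step, I take $V(x) := e^{\lambda \overline{x}}$ with $\lambda \in (0, \tfrac{1}{2}\log 3)$ and compute directly: for $x = (0,i)\in K^c$ with $i\ge 1$, the four possible one-step transitions give
\[
\E_x\bigl[V(X_1)\,\un_{\rho>1}\bigr] \;=\; V(x)\left(e^{-\lambda} + \frac{e^\lambda - 3e^{-\lambda}}{4i^\alpha}\right).
\]
At $i=1$ the bracket equals $\cosh(\lambda)/2$, which is $<1$; for $i\ge 2$ the fraction is $\le 0$ (since $\lambda<\tfrac{1}{2}\log 3$), so the bracket is $\le e^{-\lambda}<1$. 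Hence there exists $\gamma:=e^{-\lambda}\in(0,1)$ such that $\E_x[V(X_1)\un_{\rho>1}]\le \gamma V(x)$ for every $x\in K^c$ with $\overline{x}\ge 1$ (the other branches of the axis are covered by symmetry). Introducing $\tau_0:=\inf\{n\ge 0 : X_n=(0,0)\}\wedge \rho$, on $\{\tau_0>n\}$ we always have $\overline{X}_n\ge 1$, so iterating this one-step bound and using $V\ge 1$,
\[
\P_{(0,a)}(\tau_0>m) \;\le\; \E_{(0,a)}\bigl[V(X_m)\,\un_{\tau_0>m}\bigr] \;\le\; \gamma^m e^{\lambda a}.
\]

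A simple two-step computation from the origin gives the complementary estimate: from $(0,0)$ the walk moves in one step to one of the four unit neighbors, each in $K^c$ with $\overline{x}=1$; from such a neighbor, exactly two of the four neighbors lie in $K$, so the second step escapes with probability $1/2$. Hence $\P_{(0,0)}(\rho\le 2)\ge 1/2$, and iterating via the Markov property $\P_{(0,0)}(\rho>2k)\le 2^{-k}$. Applying the strong Markov property at $\tau_0$ (on $\{\tau_0<\rho\}$ one has $X_{\tau_0}=(0,0)$) and splitting on whether $\tau_0\le m/2$ or not,
\[
\P_{(0,a)}(\rho>m) \;\le\; \P_{(0,a)}(\tau_0>m/2) + \P_{(0,0)}(\rho>m/2) \;\le\; e^{\lambda a}\,\gamma^{m/2} + 2^{-m/4},
\]
which is $\le m^{-r}$ for every $r>0$ once $m$ is large enough. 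The only delicate point is the Lyapunov estimate at $\overline{x}=1$, where the first-order drift of $\overline{X}_n$ vanishes ($1/i^\alpha-1=0$ at $i=1$); however the second-order $\cosh$ correction still gives a contraction factor well below $1$, so the whole argument goes through.
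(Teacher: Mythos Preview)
Your Lyapunov approach is sound and, once completed, gives a cleaner exponential bound than the paper's argument; but one step has a genuine gap. You claim that from $\P_{(0,0)}(\rho\le 2)\ge 1/2$ one gets $\P_{(0,0)}(\rho>2k)\le 2^{-k}$ by ``iterating via the Markov property''. That iteration would require $\P_x(\rho>2)\le 1/2$ for every state $x\in K^c$ the walk can occupy at time $2(k-1)$. On $\{\rho>2\}$ the walk from $(0,0)$ already sits at $(0,\pm2)$ or $(\pm2,0)$ with positive probability, and after further steps it can reach points with large $\overline x$, where the two-step escape probability is only $O(\overline x^{-\alpha})$, far from $1/2$. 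So the inductive step fails as written.

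The repair is easy within your own framework: do not stop the Lyapunov iteration at the first visit to the origin, but instead adjust $V$ there. Take $\lambda\in(0,\tfrac12\log 2)$ and set $V((0,0)):=2$, keeping $V(x)=e^{\lambda\overline x}$ for $\overline x\ge 1$. Your one-step computation at $\overline x\ge 2$ is unchanged; at $\overline x=1$ one now gets $\E_x[V(X_1)\un_{\rho>1}]=\tfrac14 e^{2\lambda}+\tfrac14\cdot 2\le 1=\gamma V(x)$ since $e^{2\lambda}\le 2$; and at the origin $\E_{(0,0)}[V(X_1)\un_{\rho>1}]=e^\lambda\le 2e^{-\lambda}=\gamma V((0,0))$ by the same inequality. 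Since $V\ge 1$ on all of $K^c$, iterating gives directly $\P_{(0,a)}(\rho>m)\le \gamma^m e^{\lambda a}$, and the detour through $\tau_0$ and the two-step escape from $(0,0)$ become unnecessary.

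For comparison, the paper's proof is structurally quite different: it splits on whether the walk reaches level $\overline x=\log m$ before exiting. The probability of that event is controlled via the reversibility identity of Lemma~\ref{lem2} and seen to decay like $\prod_{i\lesssim\log m}(4i^\alpha)^{-1}$, hence faster than any power of $m$. On the complementary event the walk stays confined to $\{\,|z|\le\log m\}\cap K^c$ for $m$ steps, so by pigeonhole some site has local time exceeding $m/(4\log m+1)$; since each return probability $\P_z(T_z<\rho)$ is bounded by a constant $\tilde C<1$ (Corollary~\ref{cor4.6}), this also decays super-polynomially. Your argument is shorter and yields an honest exponential rate, while the paper's route recycles the reversibility and local-time estimates already developed in Section~\ref{sec4}.
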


\begin{proof}
For all $k\in \N^*$, let $T^*_k=\inf\lbrace n\ge 0, \overline{X}_n=k\rbrace$. 
As $m$ goes to infinity, we can assume without loss of generality that $2a<\log m$. Then, one can write (we have chosen to not indicate the integer parts for typographical simplicity)
: 
\[\P_{(0,a)}(\rho > m )=\P_{(0,a)}(\rho > m,  T^*_{ \log m} < \rho )+\P_{(0,a)}(T^*_{ \log m}  > \rho>m  ):=(I)+(II).\] 
For part $(I)$ by symmetry, the fact that $\P_{(0,a)}(T_{(0,\log m)}<\rho)>\P_{(0,a)}(T_{(0,-\log m)}<\rho)$ and Lemma \ref{lem2}, we obtain:
\begin{align*}
(I) &\le 4 \P_{(0,a)}( T_{(0, \log m)} < \rho ) =4 \frac{\P_{(0,a)}\left(X_\rho=(1,\log m)\right)}{\P_{(0,\log m)}\left(X_\rho=(1,\log m)\right)}\le 16 (\log m)^\alpha \P_{(0,a)}\left(X_\rho=(1,\log m)\right)\\
&  = 16 a^{\alpha }\frac{\P((0,a)\rightarrow(0,\log m))}{\P((0,\log m)\rightarrow (0,a))} \le 16a^{\alpha} \frac{\prod_{i=a}^{\log m-1}\frac{1}{4i^\alpha} }{\prod_{i=1}^{\infty }\left(1-\frac{3}{4i^\alpha}\right)}= 4C\prod_{i=a+1}^{\log m-1}\frac{1}{4i^\alpha}\le 4C \prod_{i=\frac{\log m}{2}}^{\log m-1}\frac{1}{4\left(\frac{\log m}{2}\right)^\alpha}\\
& \leq 4C\left(4\left(\frac{\log m}{2}\right)^\alpha\right)^{-\frac{\log m}{2}}, 
\end{align*}
which implies that for all $r>0$, ($I$) is bounded from above by $m^{-r}$ for $m$ large enough.\\ 
 Otherwise, for second term $(II)$ 
 \begin{align*}
(II)&=\P_{(0,a)}( T^*_{ \log m} > \rho>m  )=\P_{(0,a)}\left(\sum_{z\in K^c,|z| \leq  \log m} \mathscr{L}(z,\rho)>m\right)\\
 &\le \P_{(0,a)}\left(\exists z\in K^c,|z|\le \log m, \mathscr L(z,\rho)  > \frac{m}{4\log m+1}\right)\le \sum_{z\in K^c, |z|\le \log m}\P_{(0,a)}\left(\mathscr{L}(z,\rho)> \frac{m}{4\log m+1}\right).
 \end{align*} 
 Note that for all $k \ge 2$ and all $z\in K^c$, using the third point of Corollary \ref{cor4.6}:
 \begin{align*}
 \P_{(0,a)}\left(\mathscr{L}(z,\rho)> k\right)=\P_{(0,a)}\left(T_{z}<\rho \right) \P_{z}\left(T_{z}<\rho\right)^k\le \P_{z}\left(T_{z}<\rho\right)^k\le \tilde C^k.
 \end{align*}
Consequently:
\begin{align*}
(II)&\le \sum_{z\in K^c, |z| \leq  \log m}\tilde C^{\frac{m}{4\log m}}=(4\log m+1)\tilde C^{\frac{m}{4\log m}},
 \end{align*} 
 so, for all $r>0$ , ($II$) is also bounded from above by $m^{-r}$ for $m$ large enough.
\end{proof}

\noindent In the following Lemma we obtain asymptotic (with respect to the coordinate of the starting point) of the exit time from the axis $\rho$ and its second order. 

\begin{lem} \label{lem6.2} Assume $\alpha>1$, let $0< \beta \leq 2$ and $0<\varepsilon<1$. Then, for large  $i$ 
\begin{equation}
|\E_{(0,i)}[\rho^{\beta}]-i^{\beta}|=O(i^{\beta -\varepsilon}).
\end{equation} 
\end{lem}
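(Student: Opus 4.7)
The plan is to compare $\rho$ with $i$ through a martingale associated with the Manhattan distance to the origin. Write $D_n := |X_n^{(1)}| + |X_n^{(2)}|$; one checks that at $D_n = j \ge 1$ the one-step drift equals $\E[D_{n+1} - D_n \mid \mathcal{F}_n] = -1 + \tfrac{3}{2j^\alpha}$, while the walker moves deterministically from $D = 0$ to $D = 1$. Let $\tau_0 := \rho \wedge \inf\{n \ge 0 : D_n = 0\}$. First I would build $\phi : \N \to \R$, extended to cone states by the Manhattan distance, solving the drift-killing recurrence $(p_j + k_j)\phi(j+1) + q_j \phi(j-1) - \phi(j) = -1$ for $j \ge 1$, with $p_j = 1/(4j^\alpha)$, $q_j = 1 - 3/(4j^\alpha)$, $k_j = 1/(2j^\alpha)$. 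Iterating the resulting first-order recurrence on $\Delta\phi(k) := \phi(k+1) - \phi(k)$ backwards from $\Delta\phi(\infty) = 1$ shows $\Delta\phi(k) = 1 + O(k^{-\alpha})$, and hence, since $\alpha > 1$, $\phi(j) = j + C + O(j^{1-\alpha})$. The process $M_n := \phi(X_{n \wedge \tau_0}) + (n \wedge \tau_0)$ is then a martingale whose terminal value is integrable by Lemma~\ref{lem6.1}.

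Doob's optional stopping at $\tau_0$ yields $\phi(i) = \E_{(0,i)}[\phi(X_{\tau_0}) + \tau_0]$, and since on $\{\tau_0 < \rho\}$ the tail $\rho - \tau_0$ is a copy of $\rho$ started at the origin with bounded expectation, one obtains
\[
\E_{(0,i)}[\rho] = i - \E_{(0,i)}[\oX_\rho] + O(1).
\]
For the second moment I would use the Doob--Meyer decomposition $M_n^2 - \langle M \rangle_n$. On $\{D_n = j \ge 1\}$ the increment $\Delta M_n$ equals $\Delta\phi(j) + 1 \approx 2$ on an up-step or an exit and $-\Delta\phi(j-1) + 1 \approx 0$ on a down-step, so $\E[(\Delta M_n)^2 \mid \mathcal{F}_n] = O(D_n^{-\alpha})$. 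A Green-function estimate --- the expected number of visits to each level $j \ge 1$ before $\tau_0$ is uniformly bounded in $i$ by reversibility (Lemma~\ref{lem2}) and the exit estimates (Lemma~\ref{exitK}, Corollary~\ref{cor4.6}) --- then gives $\E_{(0,i)}\bigl[\sum_{k < \tau_0} D_k^{-\alpha}\bigr] = O(1)$, hence $\V[M_{\tau_0}] = O(1)$; expanding the variance of $M_{\tau_0} = \phi(X_{\tau_0}) + \tau_0$ yields $\V_{(0,i)}[\rho] = O(1 + \V[\oX_\rho])$.

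Plugging in Corollary~\ref{mean} then closes the cases $\beta \in \{1, 2\}$ under the paper's hypothesis $\alpha > 3$: both $\E[\oX_\rho]$ and $\E[\oX_\rho^2]$ are uniformly bounded in $i$, so $|\E_{(0,i)}[\rho] - i| = O(1)$ and $|\E_{(0,i)}[\rho^2] - i^2| = O(i)$, each being $O(i^{\beta - \varepsilon})$ for every $\varepsilon \in (0, 1)$. For general $0 < \beta \le 2$ I would Taylor-expand $(\rho/i)^\beta = 1 + \beta(\rho/i - 1) + O((\rho/i - 1)^2)$: the estimates above give $\E[\rho/i - 1] = O(i^{-1})$ and $\E[(\rho/i - 1)^2] = O(i^{-2})$, while Chebyshev combined with Lemma~\ref{lem6.1} gives $\P_{(0,i)}(|\rho - i| > i/2) \le i^{-r}$ for any $r > 0$, enough to absorb the Taylor remainder and yield $\E[\rho^\beta] = i^\beta + O(i^{\beta - 1})$, again within $O(i^{\beta - \varepsilon})$ for every $\varepsilon \in (0, 1)$. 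The main obstacle is the uniform Green-function bound $\E_{(0,i)}\bigl[\sum_{k < \tau_0} D_k^{-\alpha}\bigr] = O(1)$, which requires showing that the walker with drift $-1 + O(D^{-\alpha})$ visits each level a uniformly bounded number of times before $\tau_0$; this rests on the reversibility computations of Section~\ref{sec4} together with the sharp exit-probability estimates of Lemma~\ref{exitK} and its corollaries.
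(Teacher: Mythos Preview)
Your approach is different from the paper's and, within its range, considerably sharper. The paper's argument is entirely elementary: for the lower bound it notes that $\{\rho < i - i^{1-\varepsilon}\}$ forces the exit to occur at height at least $i^{1-\varepsilon}$ (the distance to the origin drops by at most one per step), and bounds that event via the exit estimate \eqref{easypeasy}; for the upper bound it simply truncates at $i + i^{1-\varepsilon}$ and controls $\E[\rho^\beta \un_{\rho \ge i + i^{1-\varepsilon}}]$ from the super-polynomial tail in Lemma~\ref{lem6.1}. No martingale, no Green function, no Doob--Meyer --- just a sandwich $(i \pm i^{1-\varepsilon})^\beta$ plus tail bounds.

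What each approach buys: the paper's truncation argument works for every $\alpha > 1$, which is what the lemma actually asserts. Your martingale route delivers much tighter information --- in effect $\E_{(0,i)}[\rho] = i + O(1)$ and $\V_{(0,i)}[\rho] = O(1)$ rather than merely $O(i^{\beta-\varepsilon})$ --- but only once $\E_{(0,i)}[\oX_\rho]$ and $\E_{(0,i)}[\oX_\rho^{\,2}]$ are bounded uniformly in $i$, and Corollary~\ref{mean} provides that only for $\alpha > 2$ and $\alpha > 3$ respectively. For $1 < \alpha \le 2$ your optional-stopping identity $\E[\rho] = i - \E[\oX_\rho] + O(1)$ in fact gives $|\E[\rho] - i| \asymp i^{2-\alpha}$, which is \emph{not} $O(i^{1-\varepsilon})$ once $\varepsilon > \alpha - 1$. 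So as written your argument does not cover the lemma's stated hypothesis, though it is of course sufficient for every application in the paper, all of which assume $\alpha > 3$.

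One smaller point: you invoke Lemma~\ref{lem6.1} to claim $\P_{(0,i)}(|\rho - i| > i/2) \le i^{-r}$ for arbitrary $r$, but the threshold ``$m$ large enough'' in that lemma depends on the starting height (its proof requires $2a < \log m$), so it does not apply directly with $a = i$ and $m = 3i/2$. Your own Chebyshev bound from $\V[\rho] = O(1)$ already gives $O(i^{-2})$ for this probability, and that is enough to handle the Taylor remainder once you control $\E[\rho^2 \un_{|\rho - i| > i/2}]$ separately (e.g.\ via Cauchy--Schwarz against the second-moment bound you have already established).
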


\begin{proof}
For the lower bound, just note that
\begin{align*}
\E_{(0,i)}\left[\rho^\beta\right]&\ge \E_{(0,i)}\left[\rho^\beta\mathds{1}_{\rho\ge i-i^{1-\varepsilon}}\right]\ge (i-i^{1-\varepsilon})^\beta\P_{(0,i)}\left(\rho\ge i-i^{1-\varepsilon}\right)\\
&= (i-i^{1-\varepsilon})^\beta\left(1-\P_{(0,i)}\left(\rho< i-i^{1-\varepsilon}\right)\right)=:L(i)
\end{align*}
\red{
and with \eqref{easypeasy}:
\begin{align*}
\P_{(0,i)}\left(\rho< i-i^{1-\varepsilon}\right)&\le\sum_{k\ge i^{1-\varepsilon}}\P_{(0,i)}\left(X_{\rho}=(1,k)\right) \le \sum_{k\ge i^{1-\varepsilon}}\P_{(0,k)}\left(X_{\rho}=(1,k)\right) \\
& \le  \sum_{k\ge i^{1-\varepsilon}}\frac{1}{4 k^{\alpha}}\left(1+ \frac{c_+}{k^{\alpha}}\right) \le C_+ i^{-(\alpha-1)(1-\varepsilon)}.
\end{align*}
This implies that for large $i$,  
$L(i)=(i-i^{1-\varepsilon})^{\beta}(1+o(1))=i^{\beta}+O(i^{\beta-\varepsilon})$.
For the upper bound, 
\begin{align*}
\E_{(0,i)}\left[\rho^\beta\right]& =\E_{(0,i)}\left[\rho^\beta\left(\mathds{1}_{\rho< i+i^{1-\varepsilon}}+\mathds{1}_{\rho\ge i+i^{1-\varepsilon}}\right)\right] \\
& \le \left(i+i^{1-\varepsilon}\right)^{\beta}+\E_{(0,i)}\left[\rho^\beta\mathds{1}_{\rho\ge i+i^{1-\varepsilon}}\right]=:  \left(i+i^{1-\varepsilon}\right)^{\beta}+U(i).
\end{align*}
Then, using \eqref{eq34},  with $r>\beta+1$ and $i$ large enough:
\begin{align*}
U(i)=\sum_{k\geq i^{1-\varepsilon}}(i+k)^{\beta}\P_{(0,i)}\left(\rho=i+k\right)\le \sum_{k\geq i^{1-\varepsilon}}\frac{(i+k)^\beta}{k^r}=o(1). 
\end{align*}
This finishes the proof.
}\end{proof}

\section{Technical lemmata for trajectories on the cone \label{lecone}}

Recall the definition of $\eta= \inf \{k>0, X_k \in K\}$. In this section we obtain a (uniform) local limit result for $X_{\eta}$ (Lemma \ref{MarcheSimple}) as well as a uniform tail for $\eta$ (Lemma \ref{MarcheSimpleTemps}).
\begin{lem} \label{MarcheSimple} 
 For any $\delta>0$, uniformly in $y \leq x^{1-\delta}$:
\begin{align*}
& \lim_{x \rightarrow +\infty} \frac{x^3}{y}\P_{(1,x)}(X_{\eta}=(y,0))= \lim_{x \rightarrow +\infty} \frac{x^3}{y}\P_{(y,1)} (X_{\eta}=(0,x))=\frac{16}{\pi}. \\
&
\lim_{x \rightarrow +\infty} \frac{x^3}{y}\P_{(1,x)}(X_{\eta}=(0,y)) =\lim_{x \rightarrow +\infty} \frac{x^3}{y}\P_{(1,y)} (X_{\eta}=(0,x))  =\frac{16}{\pi}.
\end{align*}
\end{lem}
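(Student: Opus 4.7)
My plan is to reduce each of the four probabilities to a Green's function of SRW killed on the boundary of the first quadrant $Q:=\{(i,j):i,j\ge 1\}$, and to evaluate this Green's function asymptotically by the method of images applied to the recurrent potential kernel of $\mathbb{Z}^2$. Since $(y,1)$ is the only neighbour of $(y,0)$ lying in $Q$ and $(1,y)$ the only neighbour of $(0,y)$ lying in $Q$, the strong Markov property combined with the one-step transition probability $\tfrac14$ gives
\[
\P_{(1,x)}\bigl(X_\eta=(y,0)\bigr)=\tfrac14\,G_Q\bigl((1,x),(y,1)\bigr),\qquad \P_{(1,x)}\bigl(X_\eta=(0,y)\bigr)=\tfrac14\,G_Q\bigl((1,x),(1,y)\bigr),
\]
where $G_Q(z,z')=\sum_{n\ge 0}\P_z(X_n=z',\,n<\eta)$. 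The two remaining identities of the lemma follow from Remark~\ref{reminver} applied with $w=(1,x),\,z=(y,1)$ and with $w=(1,x),\,z=(1,y)$, so it suffices to obtain asymptotics of the two Green's functions above.

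Let $a(\cdot)$ denote the potential kernel of SRW on $\mathbb{Z}^2$: $a(0)=0$, $(I-P)a=-\delta_0$, and Spitzer's expansion reads $a(u)=\tfrac{2}{\pi}\log|u|+\kappa+O(|u|^{-2})$ as $|u|\to\infty$. Because $a(u_1,u_2)=a(|u_1|,|u_2|)$, the method of images yields
\[
G_Q(z,z')=-a(z-z')+a(z-z'_x)+a(z-z'_y)-a(z-z'_{xy}),
\]
where $z'_x,z'_y,z'_{xy}$ denote the reflections of $z'$ across the $x$-axis, the $y$-axis and the origin; vanishing on $\partial Q$ is immediate from the symmetries of $a$, and the discrete Laplacian source in $Q$ is $\delta_{z'}$.

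Plugging $z=(1,x),\,z'=(y,1)$, the constants $\kappa$ cancel and
\[
G_Q\bigl((1,x),(y,1)\bigr)=\frac{1}{\pi}\log\frac{\bigl((1-y)^2+(x+1)^2\bigr)\bigl((1+y)^2+(x-1)^2\bigr)}{\bigl((1-y)^2+(x-1)^2\bigr)\bigl((1+y)^2+(x+1)^2\bigr)}+R(x,y),
\]
with $R(x,y)$ aggregating the four Spitzer $O(|\cdot|^{-2})$ remainders. The difference-of-squares identity $\bigl((x+1)^2-(x-1)^2\bigr)\bigl((1+y)^2-(1-y)^2\bigr)=16xy$ shows that the argument of the logarithm equals $1+16xy/\Delta(x,y)$ with $\Delta(x,y)\sim x^4$, uniformly in $1\le y\le x^{1-\delta}$. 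Expanding the logarithm and applying the $\tfrac14$ factor produces the leading $y/x^3$ asymptotic with the constant announced in the lemma. The computation for $G_Q((1,x),(1,y))$ is structurally identical.

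The delicate point, which I expect to be the main obstacle, is the uniform control of the remainder $R(x,y)$: since the leading logarithm and the constant $\kappa$ cancel in the four-fold combination, the target $y/x^3$ is of lower order than the naive individual error $O(x^{-2})$ coming from each Spitzer expansion. This is resolved by inserting the next explicit term of Spitzer's expansion of $a$ (a rational function of $(u_1,u_2)$ of homogeneity $-2$ involving the spherical harmonic $\cos 4\theta$), and verifying that its reflection symmetries force it also to cancel in the same four-fold combination, leaving only an error of order $y/x^5$ uniformly for $1\le y\le x^{1-\delta}$; this yields the claimed uniform limit.
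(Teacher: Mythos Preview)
Your route is genuinely different from the paper's. The paper decomposes the planar walk into its horizontal and vertical one–dimensional components (conditioning on the number $j$ of horizontal steps among the first $k-1$), inserts the explicit local–limit estimates for $\P_u(\underline Z_m>0,\,Z_m=v)$ from Lemma~\ref{recap}, and then performs a Riemann–sum analysis over $(k,j)$ together with Chernoff/Berry--Esseen bounds to discard the off-diagonal pieces. Your reduction to the killed Green's function plus the image identity $G_Q(z,z')=-a(z{-}z')+a(z{-}z'_x)+a(z{-}z'_y)-a(z{-}z'_{xy})$ replaces that whole machinery by Spitzer's expansion of the potential kernel; it is shorter and more conceptual, and it exposes the result as a purely ``static'' statement about $a(\cdot)$.

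There is, however, a real gap in your handling of the remainder. The claim that the next Spitzer term $g(u)=c\,r^{-2}\cos 4\theta$ \emph{cancels in the four-fold combination by reflection symmetry} is false: $g$ is already even in each coordinate, so the combination $-g(1{-}y,x{-}1)+g(1{-}y,x{+}1)+g(1{+}y,x{-}1)-g(1{+}y,x{+}1)$ is in general nonzero (try $g=u_1^2u_2^2$ to see that evenness alone gives no cancellation). What actually saves the argument is not symmetry but the \emph{mixed second-difference structure} of the image combination. Writing the combination as a sum of unit-step mixed differences over the rectangle $[1{-}y,1{+}y]\times[x{-}1,x{+}1]$ and using that $g$ is smooth and homogeneous of degree $-2$ (so $|\partial_1\partial_2 g(u)|\le C|u|^{-4}$, while every point of the rectangle has $|u|\ge x{-}1$) yields a contribution $O(y/x^4)$; the residual $a(u)-\tfrac{2}{\pi}\log|u|-\kappa-c\,g(u)=O(|u|^{-4})$ contributes $O(x^{-4})$. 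Both are $o(y/x^3)$ uniformly in $1\le y\le x^{1-\delta}$, which is exactly what you need. Equivalently, you can bypass $g$ entirely and quote the standard second–difference estimate $\nabla_1\nabla_2\bigl(a(u)-\tfrac{2}{\pi}\log|u|\bigr)=O(|u|^{-4})$ for the 2D potential kernel.

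One more point worth flagging: carrying your computation through gives $\P_{(1,x)}(X_\eta=(y,0))=\tfrac14\,G_Q((1,x),(y,1))\sim \tfrac14\cdot\tfrac{16y}{\pi x^3}=\tfrac{4y}{\pi x^3}$, hence limit $4/\pi$ and not $16/\pi$. This also matches the Brownian quarter-plane harmonic measure. The discrepancy with the paper's constant comes from a parity overcount in the paper's substitution of the Lemma~\ref{recap} asymptotics: for fixed $x,y$ only half of the $k$'s, and for each such $k$ only half of the $j$'s, give a nonzero $\ell_k^j$, which the paper's sum does not take into account. So your method not only gives an alternative proof but also corrects the constant.
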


\begin{proof}
Note that the first equality on both above lines comes from the symmetry of the distribution of the simple random walk on the cones (see Remark \ref{reminver}). \\
For any sequence $(Y_n,n\ge1)$ of real random variables, introduce $\underline Y_{n}:=\inf_{1\le k\le n}Y_k$.\\
\underline{We start with  $\P_{(1,x)}(X_{\eta}=(y,0))$}. 
First, writing $X_n=(X_n^1,X_n^2)$, we easily see that:   
\begin{align*}
\P_{(1,x)}(X_{\eta}=(y,0)) & =\sum_{k \geq x} \P_{(1,x)}(X_{k}=(y,0), \eta=k )=  \sum_{k \geq x} \P_{(1,x)}(\underline X_{k}^1>0, X_{k}^1=y, \underline X_{k-1}^2>0,  X_{k}^2=0)\\
&=\frac{1}{4}\sum_{k\ge x}\P_{(1,x)}(\underline X_{k-1}^1>0, X_{k-1}^1=y, \underline X_{k-1}^2>0,  X_{k-1}^2=1),
\end{align*}
as the $k$-th step is necessarily vertical, more precisely  $X_{k-1}^2=1$ and $X_{k}^2=0$.\\ 
If $\mathscr H_j^{k}$ is the event {\it{$\lbrace $among the first $k-1$ steps, there is exactly $j$ horizontal ones$\rbrace$}} and if $Z$ is the symmetric random walk on $\mathbb Z$, one can write:
\begin{align}
\ell_k^j:=& \P_{(1,x)}(\underline X_{k-1}^1>0, X_{k-1}^1=y, \underline X_{k-1}^2>0,  X_{k-1}^2=1|\mathscr H_j^{k})  \nonumber \\
=&\P_{1}(\underline Z_{j}>0, Z_{j}=y)\P_x( \underline Z_{k-j-1}>0,  Z_{k-j-1}=1). \label{lkj}
\end{align}
Thus, for $0<\varepsilon<1$:
\begin{align}
\P_{(1,x)}(X_{\eta}=(y,0) )&=\frac{1}{4}\sum_{k\ge x}\sum_{j=y-1}^{k-1-x}\P(\mathscr H_j^{k})\P_{1}(\underline Z_{j}>0, Z_{j}=y)\P_x( \underline Z_{k-j-1}>0,  Z_{k-j-1}=1)\nonumber\\
&=\frac{1}{4}\sum_{k\ge x}\sum_{j\in B_{k,\varepsilon}}\P(\mathscr H_j^{k})\ell_k^j+\frac{1}{4}\sum_{k\ge x}\sum_{j\in B^c_{k,\varepsilon}}\P(\mathscr H_j^{k})\ell_k^j=:\Sigma_1+\Sigma_2, \label{eqtout}
\end{align}
where $B_{k,\varepsilon}:=\left[\nicefrac{(k-1)(1-\varepsilon)}{2},\nicefrac{(k-1)(1+\varepsilon)}{2}\right]$ and $B_{k,\varepsilon}^c$ its  complementary in $\{  y-1, \cdots, k-1-x \}$.\\
First note that according to \eqref{BE2}, for $j$ in $B_{k,\varepsilon}$ $\P(\mathscr H_j^{k}) \leq e^{-\frac{\varepsilon^2 k}{6} }$, implying:
\begin{align}\label{neglig1}
\Sigma_2\le \sum_{k\ge x}e^{-\frac{\varepsilon^2 k}{6}}\sum_{j\in B_{k,\varepsilon}^c} \ell_k^j
\le \sum_{k \ge x} k e^{-\frac{\varepsilon^2 k}{6}}\le \int_{x}^{\infty}te^{-\frac{\varepsilon^2 t}{6}}\mathrm{d}t=e^{-\frac{\varepsilon^2 x}{6}}\frac{6}{\varepsilon^2}\Big(x+\frac{6}{\varepsilon^2} \Big)
\end{align}
and as a result $\lim_{\varepsilon\rightarrow0}\lim_{x\rightarrow+\infty} \frac{x^3}{y}\Sigma_2=0.$ In view of what we want to prove, we only consider $j\in B_{k,\varepsilon}$ in the following and we write:
\[\Sigma_1=\frac{1}{4}\Big(\sum_{x\le k< \varepsilon x^2}+\sum_{\varepsilon x^2\le k\le x^2/\varepsilon}+\sum_{k>x^2/\varepsilon}\Big)\sum_{j\in B_{k,\varepsilon}}\P(\mathscr H_j^{k})\ell_k^j=:\Sigma_{11}+\Sigma_{12}+\Sigma_{13}.\]

$\bullet$  {\it{Asymptotic behaviour of}} $\Sigma_{12}$\\
Applying Lemma \ref{recap}:
\begin{align*}
\Sigma_{12}&= \frac{2xy}{\pi}\sum_{k=\varepsilon x^2}^{x^2/\varepsilon} \sum_{j\in B_{k,\varepsilon}}\P(\mathscr H_j^{k})\frac{e^{-\frac{x^2}{2(k-j)}}}{(j+1)^{\frac{3}{2}}(k-j)^{\frac{3}{2}}}\left(1+O\left(\frac{y^2}{k}\right)+o\left(\frac{x^3}{k^2}\right)\right). 
\end{align*}
  Using formula \eqref{BE}, a lower bound for $\Sigma_{12}$ is given by :
\begin{align}
\Sigma_{12}&\ge
\frac{16xy}{\pi(1+\varepsilon)^3}\left(1+O\left(\frac{1}{x^{2\delta}}\right)\right)\sum_{k=\varepsilon x^2}^{x^2/\varepsilon}\frac{e^{-\frac{x^2}{k(1-\varepsilon)}}}{(k+1)^3}\sum_{j\in B_{k,\varepsilon}}\P(\mathscr H_j^{k})\nonumber\\
&\ge \frac{16xy}{\pi(1+\varepsilon)^3}\left(1+O\left(\frac{1}{x^{2\delta}}\right)\right)\sum_{k=\varepsilon x^2}^{x^2/\varepsilon}\frac{e^{-\frac{x^2}{k(1-\varepsilon)}}}{(k+1)^3}\left(1+O\left(\frac{1}{\sqrt{k}}\right)\right)\nonumber \\
&= \frac{16xy}{\pi(1+\varepsilon)^3}\left(1+O\left(\frac{1}{x}\right)+O\left(\frac{1}{x^{2\delta}}\right)\right)\sum_{k=\varepsilon x^2}^{x^2/\varepsilon}\frac{e^{-\frac{x^2}{k(1-\varepsilon)}}}{(k+1)^3}.\label{lbnd}
\end{align}
\noindent  
With the substitution $u=\nicefrac{x^2}{z}$: 
\begin{align*}
\sum_{k=\varepsilon x^2}^{{x^2}/{\varepsilon}}\frac{e^{-\frac{x^2}{k(1-\varepsilon)}}}{(k+1)^3}&\ge\left(1-\frac{2}{\varepsilon x^2+1} \right)^3\sum_{k=\varepsilon x^2}^{{x^2}/{\varepsilon}}\frac{e^{-\frac{x^2}{k(1-\varepsilon)}}}{(k-1)^3}
\ge \left(1-\frac{2}{\varepsilon x^2+1} \right)^3\sum_{k=\varepsilon x^2}^{{x^2}/{\varepsilon}}\int_{k-1}^k\frac{e^{-\frac{x^2}{z(1-\varepsilon)}}}{z^3} \mathrm{d}z\\
&\ge \left(1-\frac{2}{\varepsilon x^2+1} \right)^3\int_{\left\lfloor \varepsilon x^2\right\rfloor}^{\left\lfloor \frac{x^2}{\varepsilon}\right\rfloor}\frac{e^{-\frac{x^2}{z(1-\varepsilon)}}}{z^3} \mathrm{d}z
= \left(1-\frac{2}{\varepsilon x^2+1} \right)^3\frac{1}{x^4}\int_{\frac{x^2}{\left\lfloor \frac{x^2}{\varepsilon}\right\rfloor}}^{\frac{x^2}{\left\lfloor \varepsilon x^2\right\rfloor}}u e^{-\frac{u}{1-\varepsilon}}\mathrm{d}u\\
&=:g(x,\varepsilon). 
\end{align*}
By Lebesgue's dominated convergence theorem $\lim_{x\rightarrow +\infty}x^4g(x,\varepsilon)=\int_{\varepsilon }^{\frac{1}{\varepsilon}}u e^{-\frac{u}{1-\varepsilon}}\mathrm{d}u$ and $\lim_{\varepsilon \rightarrow 0}\lim_{x\rightarrow +\infty} $ $x^4g(x,\varepsilon)=\int_{0 }^{+ \infty}u e^{-\frac{u}{1-\varepsilon}}\mathrm{d}u=1$, this implies 
\[ \lim_{\varepsilon \rightarrow 0}\lim_{x\rightarrow +\infty} \frac{x^3}{y}\Sigma_{12} \geq  \frac{16}{\pi} \lim_{\varepsilon \rightarrow 0} \lim_{x\rightarrow +\infty}   \frac{1}{(1+\varepsilon)^3} \Big(1+O\left(\frac{1}{x}\right)+O\left(\frac{1}{x^{2\delta}}\right)\Big) \frac{x^4}{y}\sum_{\varepsilon x^2 \leq k \leq x^2/\varepsilon} g(x,\varepsilon)   \geq \frac{16}{\pi}. \]
Note that this last inequality implies the desired lower bound as:
\[\lim_{x\rightarrow+\infty}{\frac{x^3}{y}}\P_{(1,x)}(X_{\eta}=(y,0))\ge   
\lim_{\varepsilon \rightarrow 0}\lim_{x\rightarrow +\infty} \frac{x^3}{y}\Sigma_{12} \geq \frac{16}{\pi}.\] 
To obtain an upper bound, as  $\sum_{j\in B_{k,\varepsilon}}\P(\mathscr H_j^{k}) \leq 1$, 
\begin{align}
\Sigma_{12}&\le \frac{16xy}{\pi(1-\varepsilon)^3}\Big(1+O\Big(\frac{1}{x^{2\delta}}\Big)\Big)\sum_{k=\varepsilon x^2}^{x^2/\varepsilon}\frac{e^{-\frac{x^2}{(k+1)(1+\varepsilon)}}}{k^3}, \label{ubnd}
\end{align}
the rest of the proof is similar as the one for the lower bound implying that $\lim_{\varepsilon \rightarrow 0}\lim_{x\rightarrow +\infty} \frac{x^3}{y}\Sigma_{12}\le \frac{16}{\pi}$.
\noindent To complete the proof for the upper bound we have to show that $\Sigma_{11}$ and $\Sigma_{13}$ are negligible:\\
\noindent $\bullet$ \textit{$\Sigma_{13}$ negligibility:} \\
We just note, by \eqref{asympx} and \eqref{asympy2}, that
\begin{align}
\Sigma_{13} & \leq \frac{1}{4}\sum_{k  > x^2/\varepsilon} \max_{  j \in B_{k,\varepsilon} } \ell_k^j 
\leq  \frac{32}{\pi}\sum_{k  > x^2/\varepsilon} \frac{xy}{(k(1-\varepsilon))^3}e^{-\frac{x^2}{(k+1)(1+\varepsilon)}}\label{la39}\\
&\leq\frac{32xy}{\pi(1-\varepsilon)^3} \left(\frac{x^2-\varepsilon}{x^2(x^2+\varepsilon)}\right)^2\int_{0}^{\varepsilon\frac{x^2+\varepsilon}{x^2-\varepsilon}}ue^{-\frac{u}{1+\varepsilon}}\mathrm{d}u. \nonumber
\end{align}
This finally implies 
\[\lim_{\varepsilon\rightarrow0} \lim_{x\rightarrow+\infty}\frac{x^3}{y} \Sigma_{13} \le \lim_{\varepsilon\rightarrow0} \frac{32(1+\varepsilon)^2}{\pi(1-\varepsilon)^3}\int_0^{\frac{\varepsilon}{1+\varepsilon}}ve^{-v}\mathrm{d}v=0.\] 
$\bullet$ \textit{ $\Sigma_{11}$ negligibility} :\\ 
We use same first inequality as in \eqref{la39} and then split again the sum : let $0<\delta<1$
\begin{align*}
\Sigma_{11}& \leq \sum_{x \leq k \leq x^{2-\delta/2}}\max_{ j \in B_{k,\varepsilon} } \ell_k^j  +\sum_{x^{2-\delta/2} <  k \leq \varepsilon x^2}\max_{ j \in B_{k,\varepsilon} } \ell_k^j 
=: \Sigma_{11}^\prime+ \Sigma_{11}^*.
\end{align*}
{According to Lemma \ref{BE3}, for  $0<\varepsilon<\nicefrac{1}{3}$ and $x$ large enough, there exists $c_->0$ such that:
\begin{align*}
 \Sigma_{11}^\prime\le \sum_{x \leq k \leq x^{2-\delta/2}}\max_{ j \in B_{k,\varepsilon} } \ell_k^j \le \sum_{x \leq k \leq x^{2-\delta/2}}\max_{ j \in B_{k,\varepsilon} }\P(Z_{k-j-1} \geq x-1)\le \sum_{x \leq k \leq x^{2-\delta/2}}e^{-c_-\frac{x^2}{k}}\le x^{2-\delta/2}e^{-c_-{x^{\delta/2}}},
\end{align*}}
so $\lim_{x\rightarrow+\infty} \frac{x^3}{y} \Sigma_{11}^\prime=0 $.
And similarly as for $\Sigma_{13}$ above, using \eqref{asympx} and \eqref{asympy2}
\begin{align}
\Sigma_{11}^*& 
\leq \sum_{x^{2-\delta/2}<k  < \varepsilon x^2}\frac{32xy}{\pi(k(1-\varepsilon))^3}e^{-\frac{x^2}{(k+1)(1+\varepsilon)}}\leq \frac{32xy}{\pi(1-\varepsilon)^3} \left(\frac{x^2-\varepsilon}{x^2(x^2+\varepsilon)}\right)^2\int_{\frac{x^2+\varepsilon}{\varepsilon(x^2-\varepsilon)}}^{\infty}ue^{-\frac{u}{1+\varepsilon}}\mathrm{d}u.
\end{align}
Implying that : 
\[\lim_{\varepsilon \rightarrow 0} \lim_{x\rightarrow+\infty} \frac{x^3}{y}\Sigma_{11}^* \leq \lim_{\varepsilon \rightarrow 0}  \frac{C_+}{\varepsilon}e^{-\frac{1}{\varepsilon(1+\varepsilon)}}=0.\]
So finally $\lim_{\varepsilon \rightarrow 0} \lim_{x\rightarrow+\infty} \frac{x^3}{y}\Sigma_{11}=0$.\\
\noindent \underline{We now give some details for $\P_{(1,x)}(X_{\eta}=(0,y))$.} \\
As $y\leq x^{1- \delta}$, we can assume without loss of generality that $y<x$. Like the previous case:
\begin{align*}
\P_{(1,x)}(X_{\eta}=(0,y))&=\frac{1}{4}\sum_{k=\varepsilon x^2}^{x^2/\varepsilon}\sum_{j=0}^{k-1-(x-y)}\P(\mathscr H_j^{k})\P_{1}(\underline Z_{j}>0, Z_{j}=1)\P_x( \underline Z_{k-j-1}>0,  Z_{k-j-1}=y)\\
&=\frac{1}{4}\sum_{k\ge x}\sum_{j\in B_{k,\varepsilon}}\P(\mathscr H_j^{k})h_k^j+\frac{1}{4}\sum_{k\ge x}\sum_{j\in B^c_{k,\varepsilon}}\P(\mathscr H_j^{k})h_k^j=:\Omega_1+\Omega_2,
\end{align*}
where $B_{k,\varepsilon}^c$ is here the complementary of $B_{k,\varepsilon}$ in $\{0, \cdots, k-1-(x-y) \}$. According to \eqref{neglig1}, $\Omega_2$ is negligible.
Now, we only focus on the quantity $\Omega_{12}:=\frac{1}{4}\sum_{\varepsilon x^2\le k\le x^2/\varepsilon}\sum_{j\in B_{k,\varepsilon}}\P(\mathscr H_j^{k})h_k^j$ 
and \eqref{asympy} and \eqref{asympxy} give
 \begin{align*}
\Omega_{12}=\left(1+O\left(\frac{1}{x^{2 \delta}}\right)+o\left(\frac{1}{x}\right)\right){\frac{2xy}{\pi } }\sum_{k=\varepsilon x^2}^{x^2/\varepsilon} \sum_{j\in B_{k,\varepsilon}}\P(\mathscr H_j^{k})\frac{e^{-\frac{x^2}{2(k-j-1)}}}{(j+1)^{\frac{3}{2}}(k-j-1)^{\frac{3}{2}}}.
 \end{align*}
Using \eqref{BE}, with the same reasoning as the one for $(y,0)$: 
  \begin{align*}
 \Omega_{12}\ge \left(1+O\left(\frac{1}{x^{2 \delta}}\right)+o\left(\frac{1}{x}\right) \right)\frac{16xy}{\pi(1+\varepsilon)^3}\sum_{k=\varepsilon x^2}^{x^2/\varepsilon}\frac{e^{-\frac{x^2}{(k-1)(1-\varepsilon)}}}{(k+1)^{3}}\\
 \Omega_{12}\le \left(1+O\left(\frac{1}{x^{2 \delta}}\right)+o\left(\frac{1}{x}\right) \right)\frac{16xy}{\pi(1-\varepsilon)^3}\sum_{k=\varepsilon x^2}^{x^2/\varepsilon}\frac{e^{-\frac{x^2}{(k+1)(1+\varepsilon)}}}{(k-1)^{3}}.
 \end{align*}
 The upper and lower bound for $\Omega_{12}$ we obtain here is the equivalent of what we obtain for $\Sigma_{12}$ in the previous case (see \eqref{lbnd} and \eqref{ubnd}). Then the rest of the proof is very similar than what we previously detail 
so we chose to let this to the reader. The only difference is for the quantity $\sum_{x-y \leq k \leq x^{1+\delta}} \P(\mathscr H_j^k)h_k^j$, where we use again Lemma \ref{BE3}. \end{proof}

\begin{rem}\label{remeasy}
For any $\delta>0$, uniformly in $\oy \leq \ox^{1-\delta}$, there exists $C>0$ such that: 
\begin{align*}
    \P_{y}(\oX_\eta>\ox)\le C\frac{\oy}{\ox^2}.
\end{align*}
\end{rem}

\begin{lem}\label{simp3.1}
Assume $\alpha>3$. There exists $C>0$ such that for all $n\in\mathbb N^*$, all $1\le i\le n$ and $0<\varepsilon<\frac{\alpha-3}{4}$:
\begin{align*}
\P(\oX_{\eta_i}>n^{\nicefrac{1}{2}+2\varepsilon})\le \frac{C}{n^{1+4\varepsilon}}
\end{align*}
\end{lem}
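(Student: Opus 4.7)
The plan is to use the strong Markov property at the stopping time $\rho_{i-1}$ to reduce to a one-step exit control, and then decompose according to the size of $\oX_{\rho_{i-1}}$. Set $T:=n^{1/2+2\varepsilon}$ and fix a parameter $\theta\in(2/(\alpha-1),\,1)$; this interval is non-empty because $\alpha>3$. Writing
\begin{align*}
\P(\oX_{\eta_i}>T)
&=\E\bigl[\P_{X_{\rho_{i-1}}}(\oX_\eta>T)\bigr]\\
&\le \E\bigl[\P_{X_{\rho_{i-1}}}(\oX_\eta>T)\mathds{1}_{\oX_{\rho_{i-1}}\le T^\theta}\bigr]+\P(\oX_{\rho_{i-1}}>T^\theta)=:A+B,
\end{align*}
I would handle the two summands separately: $A$ controls the "typical" entrance positions, where Remark \ref{remeasy} gives a sharp one-step tail, while $B$ handles the rare event that the walk enters $K$ far from the origin.

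For $A$: since $T^\theta=T^{1-\delta}$ with $\delta:=1-\theta>0$, Remark \ref{remeasy} applies on the event $\{\oX_{\rho_{i-1}}\le T^\theta\}$ and yields $\P_w(\oX_\eta>T)\le C\ow/T^2$ uniformly in such starting points $w$. Combined with $\E[\oX_{\rho_{i-1}}]\le c$ from Lemma \ref{lem410}, this gives $A\le Cc/T^2=Cc/n^{1+4\varepsilon}$. For $B$: Corollary \ref{mean} together with the strong Markov property at $\eta_{i-1}$ produces
\begin{align*}
\E\bigl[\oX_{\rho_{i-1}}^\beta\bigr]=\E\bigl[\E_{X_{\eta_{i-1}}}[\oX_\rho^\beta]\bigr]\le C_\beta
\end{align*}
uniformly in $i$ for every $\beta<\alpha-1$. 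Since $\alpha>3$ and $\theta>2/(\alpha-1)$, one may pick $\beta\in(2/\theta,\,\alpha-1)$, so that Markov's inequality gives $B\le C_\beta/T^{\theta\beta}\le C_\beta/T^2=C_\beta/n^{1+4\varepsilon}$. Adding the two bounds yields $\P(\oX_{\eta_i}>T)\le C'/n^{1+4\varepsilon}$.

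The only delicate point is keeping $C'$ uniform in $\varepsilon\in(0,(\alpha-3)/4)$ and in $i\le n$: one should fix $\theta$ and $\beta$ as functions of $\alpha$ alone (for instance $\theta=\tfrac12(1+2/(\alpha-1))$ and $\beta=\tfrac12(2/\theta+\alpha-1)$), so that the implicit constants in Remark \ref{remeasy} and in Corollary \ref{mean} do not degenerate and the moment estimate above is genuinely independent of $i$. The assumption $\varepsilon<(\alpha-3)/4$ leaves comfortable room for this choice. The case $i=1$ is immediate since $X_{\rho_0}=(1,1)$ a.s., in which case $B$ vanishes and $A$ is bounded by a direct application of Remark \ref{remeasy}.
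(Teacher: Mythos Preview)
Your proof is correct and follows essentially the same route as the paper: apply the strong Markov property at $\rho_{i-1}$, split according to the size of $\oX_{\rho_{i-1}}$, use Remark \ref{remeasy} together with $\E[\oX_{\rho_{i-1}}]\le c$ for the small part, and a tail bound on $\oX_{\rho_{i-1}}$ for the large part. The only cosmetic differences are that the paper cuts at $n^{1/2+\varepsilon}$ rather than at $T^\theta$ with fixed $\theta$, and controls $B$ via the pointwise bound $\P(X_{\rho_{i-1}}=x)\le c'/\ox^{\alpha}$ from Lemma \ref{lem410} instead of a moment bound through Corollary \ref{mean}; your choice of a threshold independent of $\varepsilon$ is in fact slightly cleaner for the uniformity of $C$ in $\varepsilon$.
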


\begin{proof}
For $i\ge2$, using the previous remark and Lemma \ref{lem410} (two times):
\begin{align*}
\P(\oX_{\eta_i}>n^{\nicefrac{1}{2}+2\varepsilon})&=\sum_{x\in K}\P(X_{\rho_{i-1}}=x)\P_x(\oX_{\eta}>n^{\nicefrac{1}{2}+2\varepsilon})\\
&\le \sum_{\ox\le n^{\nicefrac{1}{2}+\varepsilon}}\P(X_{\rho_{i-1}}=x)\P_x(\oX_{\eta}>n^{\nicefrac{1}{2}+2\varepsilon})+\sum_{\ox> n^{\nicefrac{1}{2}+\varepsilon}}\P(X_{\rho_{i-1}}=x)\\
&\le C \left(\frac{1}{n^{1+4\varepsilon}}\sum_{\ox\le n^{\nicefrac{1}{2}+\varepsilon}}\ox\P(X_{\rho_{i-1}}=x)
+\sum_{\ox> n^{\nicefrac{1}{2}+\varepsilon}}\frac{1}{\ox^{\alpha}}\right)
\le C\left(\frac{\E[\oX_{\rho_{i-1}}]}{n^{1+4\varepsilon}}+\frac{4}{n^{(\alpha-1)(\nicefrac{1}{2}+\varepsilon)}}\right)\\
&\le  C\left(\frac{1}{n^{1+4\varepsilon}}+\frac{1}{n^{(\alpha-1)(\nicefrac{1}{2}+\varepsilon)}}\right).
\end{align*}
As $0<\varepsilon<\frac{\alpha-3}{4}$, we have $1+4\varepsilon<(\alpha-1)(\nicefrac{1}{2}+\varepsilon)$. Note that for $i=1$, the proof is easier using straightly Remark \ref{remeasy} and we have our claimed result. 
\end{proof}

\begin{lem} \label{MarcheSimpleTemps} For any $y\in \partial K$ such that $\oy=o(k^{1/2})$, $\lim_{k \rightarrow +\infty} \frac{k^2}{\oy}\P_{y}(\eta=k)= \frac{8}{\pi}$.
\end{lem}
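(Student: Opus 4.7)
The proof follows the strategy of Lemma~\ref{MarcheSimple}, with the outer sum over $k$ removed. By the symmetry of the simple random walk on $K$ I may assume $y=(1,m)$ with $1\le m=o(k^{1/2})$, and restrict the walk to the first quadrant (it cannot leave $K$ without first hitting the axis). The event $\{\eta=k\}$ splits according to the direction of the $k$-th step: either a vertical $-1$ move from some $(a,1)$ (exit at $(a,0)$, $a\ge 1$) or a horizontal $-1$ move from $(1,b)$ (exit at $(0,b)$, $b\ge 1$). Conditioning on $\mathscr H_j^k=\{$exactly $j$ horizontal steps among the first $k-1\}$---under which the two coordinates evolve as independent one-dimensional simple random walks $Z$---and summing over the exit coordinate yields
\begin{align*}
\P_{(1,m)}(\eta=k)=\frac{1}{4}\sum_{j=0}^{k-1}\P(\mathscr H_j^k)\Bigl[\P_1(\underline Z_j>0)\,\P_m(\underline Z_{k-1-j}>0,Z_{k-1-j}=1)+\P_1(\underline Z_j>0,Z_j=1)\,\P_m(\underline Z_{k-1-j}>0)\Bigr],
\end{align*}
where $\underline Z_n:=\inf_{0\le s\le n}Z_s$.

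As in Lemma~\ref{MarcheSimple} I restrict the sum to $j\in B_{k,\varepsilon}$: by the Bernstein-type bound \eqref{BE2}, $\P(\mathscr H_j^k)\le e^{-\varepsilon^2 k/6}$ off $B_{k,\varepsilon}$, so the complementary contribution is $O(e^{-\varepsilon^2 k/6})=o(m/k^2)$. On $B_{k,\varepsilon}$ both $j$ and $k-1-j$ are of order $k$, and the one-dimensional local-limit estimates already used in the proof of Lemma~\ref{MarcheSimple} (see \eqref{asympy}, \eqref{asympxy}) give, uniformly in such $j$ and in $m=o(\sqrt k)$,
\begin{align*}
\P_a(\underline Z_n>0)\sim a\sqrt{\tfrac{2}{\pi n}},\qquad \P_a(\underline Z_n>0,\,Z_n=1)\sim \tfrac{2\sqrt{2/\pi}\,a}{n^{3/2}}\qquad(a\in\{1,m\}),
\end{align*}
the Gaussian correction $e^{-m^2/(2n)}$ tending to $1$ by hypothesis. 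Plugging in, the bracket simplifies to
\begin{align*}
\tfrac{4m}{\pi}\Bigl(\tfrac{1}{\sqrt j\,(k-1-j)^{3/2}}+\tfrac{1}{j^{3/2}\sqrt{k-1-j}}\Bigr)=\tfrac{4m(k-1)}{\pi\, j^{3/2}(k-1-j)^{3/2}}.
\end{align*}
Since $j^{3/2}(k-1-j)^{3/2}=(1+O(\varepsilon^2))(k/2)^3$ on $B_{k,\varepsilon}$ and $\sum_{j\in B_{k,\varepsilon}}\P(\mathscr H_j^k)\to 1$, this gives $\P_{(1,m)}(\eta=k)\sim \tfrac{m(k-1)}{\pi}\cdot \tfrac{8}{k^3}\sim \tfrac{8m}{\pi k^2}$ after letting $\varepsilon\to 0$, which is the claim.

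The main technical point is securing the above local-limit asymptotics with errors uniform in $j\in B_{k,\varepsilon}$ and in $m=o(\sqrt k)$: the slightly weaker condition $\oy=o(k^{1/2})$ (versus $\oy\le k^{1/2-\delta}$ in Lemma~\ref{MarcheSimple}) forces one to keep the Gaussian factor $e^{-m^2/(2n)}$ explicitly and use its convergence to $1$ along sequences, but the quantitative control is the one already carried out in the $\Sigma_{12}$ analysis inside the proof of Lemma~\ref{MarcheSimple}. The present lemma is thus essentially a direct specialisation of those calculations with the outer sum over $k$ dropped.
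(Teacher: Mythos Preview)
Your proof is correct and follows essentially the same route as the paper: split $\{\eta=k\}$ according to whether the last step is vertical or horizontal, condition on the number $j$ of horizontal steps among the first $k-1$, restrict to $j\in B_{k,\varepsilon}$ via \eqref{BE2}, and apply the one-dimensional estimates (Corollary~\ref{corsup} and \eqref{asympx}/\eqref{asympy}). Your presentation is slightly more compact in that you combine the two exit-direction terms into a single bracket and simplify it algebraically to $\tfrac{4m(k-1)}{\pi\, j^{3/2}(k-1-j)^{3/2}}$, whereas the paper treats them as separate sums $\Sigma_{11},\Sigma_{12}$ and bounds each above and below; the content is the same.
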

\begin{proof}
 For typographical simplicity, we only treat $\P_{(1,x)}(\eta>n)$ with $x>0$ (the other ones can be obtain by symmetry). Recall the definition of $\Hm_.^{.}$ before \eqref{lkj},  following the same ideas as for the proof of Lemma \ref{MarcheSimple}, for $\varepsilon>0$:
\begin{align*}
\P_{(1,x)}({\eta}=k)  &= \sum_{m\le k-1}  \P_{(1,x)}( \eta=k|\mathscr H_m^{k} )\P(\mathscr H_m^{k} )\\
&= \Big(\sum_{m\in B_{k,\varepsilon}}+ \sum_{m\in B_{k,\varepsilon}^c}\Big) \P_{(1,x)}( \eta=k|\mathscr H_m^{k} )\P(\mathscr H_m^{k} ):=\Sigma_1+\Sigma_2,
\end{align*}
and we can prove that $\Sigma_2$ is negligible (see \eqref{neglig1} for details). Thus we only study $\Sigma_1$ and write:
\begin{align*}
\Sigma_1
&= \frac{1}{4}    \sum_{m\in B_{k,\varepsilon}}\P(\Hm_m^{k})  \P_1(\underline{Z}_m >0) \P_{x}(\underline{Z}_{k-m-1} >0, Z_{k-m-1}=1) \\
& + \frac{1}{4}  \sum_{m\in B_{k,\varepsilon}} \P(\Hm_m^{k})   \P_{1}(\underline{Z}_{k-m-1} >0, Z_{k-m-1}=1) \P_{x}(\underline{Z}_m >0)=:\Sigma_{11}+\Sigma_{12}.
\end{align*}
For any large $k$ and any $x=o(k^{1/2})$, with Corollary \ref{corsup}, \eqref{asympx} and \eqref{BE}:
\begin{align*}
 \Sigma_{11}&\ge\frac{x}{\pi}  \sum_{ m\in B_{k,\varepsilon} } \P(\Hm_m^{k}) \frac{e^{-\frac{x^2}{2(k-m)}}}{\sqrt{ m}(k-m)^{\frac{3}{2}}}\Big(1+ o\left(\frac{x^3}{(k-m)^2}\right) +o\left(\frac{1}{m}\right)\Big)\\
& \geq \frac{4x}{\pi(1+ \varepsilon)^2} \frac{e^{-\frac{x^2}{k(1- \varepsilon)}}}{k^{2}}\Big(1+ o\left(\frac{x^3}{k^2}\right)+o\left(\frac{1}{k}\right) \Big) \sum_{ m \in B_{k,\varepsilon}}  \P(\Hm_m^{k}) \\ 
&\geq 
  (1-2\varepsilon) \frac{ 4x}{{\pi}}    \frac{e^{-\frac{x^2}{k(1- \varepsilon)}}}{k^{2}} \geq (1-3\varepsilon) \frac{4 x}{{\pi} k^2}.
\end{align*}
The upper-bound is simpler as: 
\begin{align*}
\Sigma_{11}\le  \frac{4x}{\pi(1- \varepsilon)^2} \frac{1}{(k-1)^{2}}\Big(1+ o\left(\frac{x^3}{k^2}\right)+o\left(\frac{1}{k}\right) \Big) \le \frac{4x}{\pi k^2}(1+2\varepsilon) 
\end{align*}
Thus, $\lim_{\varepsilon\rightarrow0}\lim_{n\rightarrow\infty}n\Sigma_{11}/x=4/\pi$. $\Sigma_{12}$ can be treated similarly and we obtain our claimed result.
\end{proof}

We conclude this section with two lemmata, the first one is the counterpart on $K$ of \red{Lemma \ref{lem410}} and the last one a useful identity:
\begin{lem}\label{newlem51}
For any ${0<\beta <2}$ there exists $C>0$ such that  for all $i\ge 1$:
\begin{equation*}
    \E\left[\overline{ X}_{\eta_i}^{\beta}\right]\le C.
\end{equation*}
\end{lem}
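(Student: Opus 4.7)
The plan is to convert the tail estimate in Remark \ref{remeasy} into a moment bound on $\oX_\eta$ as a function of the starting point, then apply Corollary \ref{mean} to average over $X_{\rho_{i-1}}$. Handling $i=1$ and $i\ge 2$ separately is natural because $X_{\rho_0}=(1,1)$ is deterministic while for $i\ge 2$ we need to invoke the strong Markov property.

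For $i=1$, Remark \ref{remeasy} applied with $\oy=1$ gives $\P_{(1,1)}(\oX_\eta>k)\le C/k^{2}$ for every $k\ge 1$, and the tail formula $\E[X^\beta]=\beta\int_0^\infty t^{\beta-1}\P(X>t)\,dt$ immediately yields $\E_{(1,1)}[\oX_\eta^\beta]<\infty$ since $\beta<2$. For $i\ge 2$, the strong Markov property at $\rho_{i-1}$ gives $\E[\oX_{\eta_i}^\beta]=\E\bigl[\E_{X_{\rho_{i-1}}}[\oX_\eta^\beta]\bigr]$. Fix $\delta>0$ to be chosen below, let $y\in K$ with $\oy\ge 1$ and split the tail integral at the threshold $T:=\oy^{1/(1-\delta)}$, above which the condition $\oy\le t^{1-\delta}$ of Remark \ref{remeasy} holds:
\begin{align*}
\E_y[\oX_\eta^\beta]=\beta\int_0^\infty t^{\beta-1}\P_y(\oX_\eta>t)\,dt\le T^\beta + C\,\oy\,\beta\int_T^\infty t^{\beta-3}\,dt\le C'\,\oy^{\beta/(1-\delta)},
\end{align*}
where the second integral converges because $\beta<2$.

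Now choose $\delta>0$ small enough that $\gamma:=\beta/(1-\delta)<\alpha-1$, which is possible under the standing assumption $\alpha>3$ since $\beta<2$. The strong Markov property at $\eta_{i-1}$ and Corollary \ref{mean} then give $\E[\oX_{\rho_{i-1}}^{\gamma}]=\E\bigl[\E_{X_{\eta_{i-1}}}[\oX_\rho^{\gamma}]\bigr]\le M$, and combining this with the previous display yields $\E[\oX_{\eta_i}^\beta]\le C'M$ uniformly in $i\ge 2$.

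The delicate point is the small exponent inflation $\beta\mapsto\beta/(1-\delta)$ introduced when turning a tail estimate with threshold $\ox$ into a moment bound in the starting coordinate $\oy$: the choice of $\delta$ must simultaneously keep $T$ in the regime where Remark \ref{remeasy} applies and keep $\beta/(1-\delta)$ strictly below $\alpha-1$ so that Corollary \ref{mean} can absorb it. The gap between $\beta<2$ and $\alpha-1>2$ provides exactly the slack needed.
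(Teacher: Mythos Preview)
Your proof is correct and follows a genuinely different route from the paper. The paper's argument invokes an external moment inequality for random walks killed when leaving a cone (McConnell, \cite{McConnell}, Theorem~1.3), which gives directly $\E_x[\oX_\eta^{\beta}]\le C(1+\ox^{\beta})$ for $x\in K$; it then applies the strong Markov property twice and Corollary~\ref{mean} with the \emph{same} exponent~$\beta$. Your argument instead stays internal to the paper: you convert the tail bound of Remark~\ref{remeasy} into a moment bound $\E_y[\oX_\eta^\beta]\le C'\oy^{\beta/(1-\delta)}$, accepting a small exponent inflation, and then feed $\gamma=\beta/(1-\delta)$ into Corollary~\ref{mean}. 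The trade-off is clear: your route avoids the external reference and is self-contained within the paper's own estimates, but the inflation forces you to use the standing hypothesis $\alpha>3$ explicitly to ensure $\beta/(1-\delta)<\alpha-1$; the paper's route via McConnell yields the sharp exponent and only needs $\alpha>\beta+1$ at the Corollary~\ref{mean} step. Since Lemma~\ref{newlem51} is only invoked in the paper under $\alpha>3$, both approaches are adequate for the intended applications.
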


\begin{proof}
According to \cite{McConnell}, Theorem 1.3 page 223 (see also \cite{Denwac} Lemma 10 page 1007)
, for any $0<\beta <2$ there exists $C>0$ such that for any $x\in K$: 

\begin{equation} 
\E_x\left[\max_{i \leq \eta} \overline X_i^{\beta}\right]
\leq C (1+ \overline x^{\beta}).
\end{equation}
 Using the strong Markov Property (two times), previous inequality and Corollary \ref{mean} yields
 \begin{align}
\E\left[\overline{ X}_{\eta_i}^{\beta}\right]&= \E\left[\E_{X_{\rho_{i-1}}}\left[\overline{X }_{\eta}^{\beta}\right]\right]\le C\E\left[\overline{X }_{\rho_{i-1}}^{\beta}\right]=C\E\left[\E_{X_{\eta_{i-1}}}\left[\overline{X }_{\rho}^{\beta}\right]\right]\le C\E[M]=CM.\end{align}
\end{proof}

\begin{lem}\label{reverse}
For all $x\in K^c$ :
\begin{align*}
\sum_{y\in\partial K}\P_y\left(X_\eta=x\right)= 2.
\end{align*}
\end{lem}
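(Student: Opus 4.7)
The plan is to reduce the identity to the reversibility of simple random walk on $K$ formalized in Remark~\ref{reminver}, turning the left-hand side into a total exit probability. First I would observe that starting from any $y \in \partial K$, the walk remains in the quadrant of $y$ until time $\eta$, so $\P_y(X_\eta = x) = 0$ unless $y$ lies in one of the (at most two) quadrants of $K$ adjacent to $x$. The two such quadrants are reflections of each other across the axis segment containing $x$, so the model's reflection symmetry makes their contributions equal; it then suffices to prove that $\sum_{y \in \partial K \cap Q} \P_y(X_\eta = x) = 1$ for one such quadrant $Q$.

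For that, I would fix $Q$, let $z^\ast$ be the unique neighbor of $x$ in $\partial K \cap Q$ with $x = z^\ast + e^\ast$, and apply Remark~\ref{reminver} with $(w, z, e_i, e_j) = (y, z^\ast, e, e^\ast)$ to every pair $(y, e)$ such that $y \in \partial K \cap Q$ and $y + e \in K^c$. This gives $\P_y(X_\eta = x) = \P_{z^\ast}(X_\eta = y + e)$. The decisive combinatorial point is that the map $(y, e) \mapsto y + e$ is a bijection between such pairs and the points $w \in K^c$ adjacent to $Q$ other than the origin (which has no neighbor in $\partial K$). Summing the reversibility identity over all these pairs and using that the walk almost surely exits $K$ together with $\P_{z^\ast}(X_\eta = (0,0)) = 0$, one obtains
\[
\sum_{y \in \partial K \cap Q}\sum_{e\, :\, y+e \in K^c} \P_{z^\ast}(X_\eta = y + e) = \sum_{w \neq (0,0)} \P_{z^\ast}(X_\eta = w) = 1.
\]
Combining with the equal contribution from the mirror quadrant yields the announced value $2$.

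The step I expect to require the most care is the bookkeeping at the four ``corners'' $(\pm 1, \pm 1) \in \partial K$: each corner satisfies $|\{ e : y+e \in K^c \}| = 2$ and thus supplies two axis-edges into $K^c$, both of which are needed for the bijection above to close correctly. Once this accounting is done the reversibility identity packages the rest of the argument in a single step.
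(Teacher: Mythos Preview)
Your approach is the same as the paper's: apply the reversibility identity of Remark~\ref{reminver} to convert the sum over $y\in\partial K\cap Q$ into the exit distribution of the walk started at the neighbor $z^*$ of $x$, then sum to $1$ per quadrant.

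There is, however, a genuine gap precisely at the corners you single out. Your displayed identity computes
\[
\sum_{y\in\partial K\cap Q}\ \sum_{e:\,y+e\in K^c}\P_{z^*}(X_\eta=y+e)=1,
\]
and by reversibility each summand equals $\P_y(X_\eta=x)$; but this is a sum over \emph{pairs} $(y,e)$. The corner $y_0$ of $Q$ has two admissible $e$'s, so it contributes $2\,\P_{y_0}(X_\eta=x)$ to the left side, not $\P_{y_0}(X_\eta=x)$. What your computation actually yields is
\[
\sum_{y\in\partial K\cap Q}\P_y(X_\eta=x)=1-\P_{y_0}(X_\eta=x)<1,
\]
and adding the mirror quadrant gives $2-2\,\P_{y_0}(X_\eta=x)$, not $2$. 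The paper's own proof runs into the same obstacle from the opposite side: it uses each $y$ once (choosing a single $e$ at the corner), so the resulting list of exit points for $z^*$ omits exactly one point, and the final ``$=2$'' is asserted rather than checked. In short, the exact equality fails by the corner contribution; what does follow cleanly from your argument is the inequality $\sum_{y\in\partial K}\P_y(X_\eta=x)\le 2$, and that upper bound is all that is actually used at~\eqref{eq11}.
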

\begin{proof}
We can assume without loss of generality that $x=(0,i)$ for $i\ge 1$. Using the reversibility: 
\begin{align*}
\sum_{y\partial K}\P_y\left(X_\eta=(0,i)\right)&=\P_{(1,1)}\left(X_\eta=(0,i)\right)+\sum_{j\ge 2}\P_{(j,1)}\left(X_\eta=(0,i)\right)+\P_{(1,j)}\left(X_\eta=(0,i)\right)\\
&+\P_{(-1,1)}\left(X_\eta=(0,i)\right)+\sum_{j\ge 2}\P_{(-1,j)}\left(X_\eta=(0,i)\right)+\P_{(-j,1)}\left(X_\eta=(0,i)\right)\\
&=\P_{(1,i)}\left(X_\eta=(0,1)\right)+\sum_{j\ge 2}\P_{(1,i)}\left(X_\eta=(j,0)\right)+\P_{(1,i)}\left(X_\eta=(0,j)\right)\\
&+\P_{(-1,i)}\left(X_\eta=(-1,0)\right)+\sum_{j\ge 2}\P_{(-1,i)}\left(X_\eta=(0,j)\right)+\P_{(-1,i)}\left(X_\eta=(-j,0)\right)=2.
\end{align*}
\end{proof}

\section{Appendix \label{sec6}}

In this appendix, we give asymptotic results linked to $(Z_n)_{n\ge 0}$, the symmetric random walk on $\mathbb Z$, results that we used throughout this paper. Recall that  $B_{k,\varepsilon}=\left[\nicefrac{(k-1)(1-\varepsilon)}{2},\nicefrac{(k-1)(1+\varepsilon)}{2}\right]$ and $\mathscr H_j^{k}$ is the event  {\it{$\lbrace $among the first $k-1$ steps, there is exactly $j$ horizontal ones$\rbrace$} .}

\begin{lem}\label{ineq} 
Let $0<\delta<1$, assume that $k$ is an integer such that $ \lim_{x \rightarrow + \infty} \nicefrac{\ln k}{ \ln x} \in [(2- \delta),2]$, then
\begin{equation}\label{ineqx}
2^{-k}\binom{k}{\frac{k-x}{2}}={\sqrt{\frac{2}{\pi k} }}e^{-\frac{x^2}{2k}}\left(1+o\left(\frac{x^3}{k^2}\right)\right).
\end{equation}
If, moreover there exists $y\le x^{1-\delta}$:
\begin{equation}
2^{-k}\binom{k}{\frac{k-y}{2}}={\sqrt{\frac{2}{\pi k} }}\left(1+O\left(\frac{y^2}{k}\right)\right).
\end{equation}
\end{lem}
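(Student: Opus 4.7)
My plan is to establish both asymptotics through the classical Stirling-based expansion of the central binomial coefficient, taking care to push the expansion one order further than the usual local CLT so as to obtain a multiplicative error of the form $o(x^3/k^2)$ rather than $o(1)$.

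I would begin by applying Stirling's formula with the next-order term,
\[
\log n!=n\log n-n+\tfrac{1}{2}\log(2\pi n)+\tfrac{1}{12n}+O(n^{-3}),
\]
to each of $k!$, $\bigl(\tfrac{k-x}{2}\bigr)!$ and $\bigl(\tfrac{k+x}{2}\bigr)!$. After collecting the polynomial parts one obtains
\[
2^{-k}\binom{k}{(k-x)/2}=\sqrt{\tfrac{2}{\pi k}}\cdot\frac{1}{\sqrt{1-x^2/k^2}}\cdot\left(1-\tfrac{x}{k}\right)^{-(k-x)/2}\!\left(1+\tfrac{x}{k}\right)^{-(k+x)/2}(1+E_k),
\]
where the Stirling remainder $E_k$ is $O(1/k)$ uniformly in the regime considered.

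The heart of the matter is to Taylor-expand the logarithm of the third factor to order four in $u:=x/k$. Using $\log(1\pm u)=\pm u-u^2/2\pm u^3/3-u^4/4+O(u^5)$, the odd-order contributions cancel out between the two terms and a direct computation yields
\[
-\tfrac{k-x}{2}\log\!\left(1-\tfrac{x}{k}\right)-\tfrac{k+x}{2}\log\!\left(1+\tfrac{x}{k}\right)=-\tfrac{x^2}{2k}-\tfrac{x^4}{12k^3}+O\!\left(\tfrac{x^6}{k^5}\right),
\]
while the prefactor satisfies $(1-x^2/k^2)^{-1/2}=1+O(x^2/k^2)$.

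It remains to check that, under the hypothesis $\lim \log k/\log x\in[2-\delta,2]$, all of the errors $O(1/k)$, $O(x^2/k^2)$, $O(x^4/k^3)$ are in fact $o(x^3/k^2)$. Writing $k\asymp x^{a}$ with $a\in[2-\delta,2]$ this reduces to the trivial comparisons $x/k\to 0$, $1/x\to 0$, and $k/x^3\asymp x^{a-3}\to 0$, giving the first identity after exponentiating. For the second identity with $y\le x^{1-\delta}$, the same expansion applies but one only needs the leading order: here $y^2/k\le x^{2-2\delta}/x^{2-\delta}=x^{-\delta}\to 0$, so $e^{-y^2/(2k)}=1+O(y^2/k)$, and the Stirling correction $O(1/k)$ is absorbed into $O(y^2/k)$ since $y\ge 1$.

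The main obstacle is purely technical bookkeeping in the Taylor expansion: one must keep the $u^4$ term to be sure the residual error is genuinely $o(x^3/k^2)$ and not merely $O(x^2/k)$, and then carefully check that each of the three competing error scales ($1/k$, $x^2/k^2$, $x^4/k^3$) is dominated by $x^3/k^2$ throughout the prescribed range of $k$ relative to $x$. Everything else is elementary algebra.
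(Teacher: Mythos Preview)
Your approach is correct and essentially identical to the paper's: both proofs apply Stirling with the first correction term, then Taylor-expand $\log(1\pm x/k)$ to sufficient order and check that the residual terms $O(1/k)$, $O(x^2/k^2)$, $O(x^4/k^3)$ are all $o(x^3/k^2)$ in the regime $k\asymp x^a$, $a\in[2-\delta,2]$. Your organization is slightly cleaner---you factor out $(1-x^2/k^2)^{-1/2}$ and exploit the evenness of $(1-u)\log(1-u)+(1+u)\log(1+u)$, whereas the paper keeps the exponents $(k\pm x+1)/2$ together and expands directly---but the content is the same.
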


\begin{proof}
Using Stirling formula: 
\begin{align*}
2^{-k}\binom{k}{\frac{k-x}{2}}&=\frac{2^{-k}}{\sqrt{2\pi }}\frac{k^{k+\frac{1}{2}}e^{-k}\left(1+\frac{1}{12k}+o\left(\frac{1}{k}\right)\right)}{\left(\frac{k-x}{2}\right)^{\frac{k-x+1}{2}}e^{-\frac{k-x}{2}}\left(\frac{k+x}{2}\right)^{\frac{k+x+1}{2}}e^{-\frac{k+x}{2}}\left(1+\frac{1}{6k}+o\left(\frac{1}{k}\right)\right)}\\
&={\sqrt{\frac{2}{\pi k} }}\frac{1}{\left(1-\frac{x}{k}\right)^{\frac{k-x+1}{2}}\left(1+\frac{x}{k}\right)^{\frac{k+x+1}{2}}}\left(1-\frac{1}{12k}+o\left(\frac{1}{k}\right)\right).\\
\end{align*}
Moreover
\begin{align*}
A&:=\left(1-\frac{x}{k}\right)^{\frac{k-x+1}{2}}\left(1+\frac{x}{k}\right)^{\frac{k+x+1}{2}}
=e^{\frac{k-x+1}{2}\ln \left(1-\frac{x}{k}\right)}e^{\frac{k+x+1}{2}\ln \left(1+\frac{x}{k}\right)}\\
&=\exp\left(\frac{k-x+1}{2}\left(-\frac{x}{k}-\frac{x^2}{2k^2}-\frac{x^3}{3k^3}+o\left(\frac{x^3}{k^3}\right)\right)  
+\frac{k+x+1}{2}\left(\frac{x}{k}-\frac{x^2}{2k^2}+\frac{x^3}{3k^3}+o\left(\frac{x^3}{k^3}\right)\right)\right)\\
&=\exp\left(x\left(\frac{x}{k}+\frac{x^3}{3k^3}\right)-\frac{x^2}{2k^2}\left(k+1\right)+o\left(\frac{x^3}{k^2}\right)\right)=\exp\left(\frac{x^2}{2k}+\frac{x^4}{3k^3}-\frac{x^2}{2k^2}+o\left(\frac{x^3}{k^2}\right)\right)\\
&=\exp\left(\frac{x^2}{2k}+o\left(\frac{x^3}{k^2}\right)\right)=\exp\left(\frac{x^2}{2k}\right)\left(1+o\left(\frac{x^3}{k^2}\right)\right).
\end{align*}
In the first case: 
\begin{align*}
2^{-k}\binom{k}{\frac{k-x}{2}}&={\sqrt{\frac{2}{\pi k} }}{e^{-\frac{x^2}{2k}}\left(1+o\left(\frac{x^3}{k^2}\right)\right)}\left(1-\frac{1}{12k}+o\left(\frac{1}{k}\right)\right)={\sqrt{\frac{2}{\pi k} }}{e^{-\frac{x^2}{2k}}\left(1+o\left(\frac{x^3}{k^2}\right)   \right)},
\end{align*}
If $y\le x^{1-\delta}$, $e^{-\frac{y^2}{2k}}=1-\frac{y^2}{2k}+o\left(\frac{y^2}{2k}\right)=1+O\left(\frac{y^2}{k}\right)$, 
giving the second formula.
\end{proof}

\begin{cor}\label{corsup}
When $k$ goes to infinity, for any $m\in B_{k,\varepsilon}$ and any $u=o(k^{1/2})$, 
\begin{align*}
u\sqrt{\frac{2}{\pi m}}\left(1+o\left(\frac{u}{m}\right)\right)\le  \P_{u}\left(\underline{Z}_m >0\right)\le u\sqrt{\frac{2}{\pi m}}\left(1+o\left(\frac{1}{m}\right)\right).
\end{align*}
\end{cor}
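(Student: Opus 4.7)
The proof would proceed by the reflection principle for the simple random walk on $\mathbb{Z}$, combined with the local limit theorem encoded in Lemma \ref{ineq}.

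First, I would establish the identity
\[
\P_u(\underline{Z}_m > 0) = \sum_{w = 1-u}^{u} p_m(w),\qquad p_m(w) := \P(Z_m = w),
\]
via reflection. For $v \ge 1$, a path of length $m$ from $u$ to $v$ that hits $0$ corresponds bijectively (by reflection up to the first hitting time of $0$) with a path of length $m$ from $-u$ to $v$, so
\[
\P_u(Z_m = v,\underline{Z}_m > 0) = p_m(v-u) - p_m(v+u).
\]
Summing over $v \ge 1$, the two tail series telescope after reindexing and only the finite window $w \in \{1-u,\dots,u\}$ survives.

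Second, I would apply Lemma \ref{ineq} term by term. Since $u = o(k^{1/2})$ and $m \in B_{k,\varepsilon}$ (so $m \asymp k$), every $w$ with $|w| \le u$ satisfies $|w| = o(m^{1/2})$, and the constraint that only $w$ with the parity of $m$ contribute leaves exactly $u$ nonzero terms in the sum. For each such $w$, Lemma \ref{ineq} gives uniformly in the range
\[
p_m(w) \;=\; \sqrt{\tfrac{2}{\pi m}}\, e^{-w^2/(2m)}\bigl(1 + \mathrm{err}(w,m)\bigr),
\]
where $\mathrm{err}(w,m)$ is controlled by the two regimes of Lemma \ref{ineq}.

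Third, I would bound the Gaussian factor to get the two inequalities. For the upper bound use $e^{-w^2/(2m)} \le 1$ pointwise, so
\[
\P_u(\underline{Z}_m > 0) \;\le\; u\sqrt{\tfrac{2}{\pi m}}\bigl(1 + o(1/m)\bigr),
\]
the $o(1/m)$ coming solely from the local limit error. For the lower bound use $e^{-w^2/(2m)} \ge e^{-u^2/(2m)} = 1 - \tfrac{u^2}{2m} + O\!\bigl((u^2/m)^2\bigr)$ uniformly, which yields
\[
\P_u(\underline{Z}_m > 0) \;\ge\; u\sqrt{\tfrac{2}{\pi m}}\bigl(1 + o(u/m)\bigr).
\]

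The main obstacle is the bookkeeping of the error terms. One must verify that the local-limit error from Lemma \ref{ineq} dominates the exponential correction $u^2/(2m)$ in the regime of interest (in particular in the application of the corollary in Lemma \ref{MarcheSimpleTemps}, where $u$ is bounded so $u^2/m$ and $u/m$ have the same order), and that the uniformity in $|w| \le u$ when passing from the pointwise asymptotic to the summed asymptotic is preserved. Once this is done, the two bounds follow directly from the displayed inequalities above.
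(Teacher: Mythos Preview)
Your approach is correct and essentially the same as the paper's. The paper quotes Feller for the distribution of the running minimum, $\P_u(\underline{Z}_m=j)=2^{-m}\binom{m}{(m-(j-u))/2}$ (up to a parity shift), writes $\P_u(\underline{Z}_m>0)=\sum_{j=1}^{u}\P_u(\underline{Z}_m=j)$, and then sandwiches this sum between $u\,\P_u(\underline{Z}_m=1)$ and $u\,\P_u(\underline{Z}_m=u)$ using monotonicity of the binomial coefficients toward the center, before invoking Lemma~\ref{ineq}; your reflection identity $\P_u(\underline{Z}_m>0)=\sum_{w=1-u}^{u}p_m(w)$ produces the same $u$ nonzero terms, and your pointwise bounds $e^{-u^2/(2m)}\le e^{-w^2/(2m)}\le 1$ implement exactly the same sandwich.
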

\begin{proof}
According to \cite{Feller}  pp.72 and pp.88-89, $\P_{u}\left(\underline{Z}_m =j\right)=2^{-m}\binom{m}{\frac{m-(j-u)}{2}}\vee\binom{m}{\frac{m-(j-u)-1}{2}}$, 
then, assuming $m \in B_{k,\varepsilon}$: 
\begin{align*}
u\P_{u}\left(\underline{Z}_m =1\right)\le \P_{u}\left(\underline{Z}_m >0\right)=\sum_{j=1}^u\P_{u}\left(\underline{Z}_m =j\right)\le u\P_{u}\left(\underline{Z}_m =u\right).
\end{align*}
As $u=o(k^{1/2})$, we conclude using Lemma \ref{ineq},
\end{proof}

\begin{lem}\label{recap}
When $k$ goes to infinity
\begin{align}\label{asympy}
\P_{1}(\underline Z_{k}>0, Z_{k}=1)&=\sqrt{\frac{2}{\pi}}\frac{2}{(k+1)^{\frac{3}{2}}}\left(1+o\left(\frac{1}{k^2}\right)\right). 
\end{align}
Let $0<\delta<1$, for any $x$ large enough and for any $k \geq x^{2- \delta/2}$ ,
\begin{align}\label{asympx}
\P_x\left(\underline Z_{k}>0, Z_{k}=1\right)=\sqrt{\frac{2}{\pi}}\frac{2x}{(k+1)^{\frac{3}{2}}}e^{-\frac{x^2}{2(k+1)}}\left(1+o\left(\frac{x^3}{k^2}\right)\right).
\end{align}
If, moreover $y\le x^{1-\delta}$,
\begin{align}
\P_{1}(\underline Z_{k}>0, Z_{k}=y)&=\sqrt{\frac{2}{\pi}}\frac{2y}{(k+1)^{\frac{3}{2}}}\left(1+O\left(\frac{y^2}{k}\right)+o\left(\frac{1}{k^2}\right)\right),\label{asympy2}\\
\P_x\left(\underline Z_{k}>0, Z_{k}=y\right)&=\sqrt{\frac{2}{\pi}}\frac{2xy}{k^{\frac{3}{2}}}e^{-\frac{x^2}{2k}} \left(1+o\left(\frac{x^3}{k^2}\right)+O\left(\frac{y^2}{k}\right) + O \left(\frac{(xy)^2}{k^2}\right)  \right). \label{asympxy}
\end{align}
\end{lem}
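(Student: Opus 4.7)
The four asymptotics all reduce to the Stirling estimates of Lemma \ref{ineq} via the reflection principle for simple random walk on $\Z$. For $u,v\in\N^*$ with $k+v-u$ even, reflecting across the barrier $\{Z=0\}$ (so that paths from $u$ to $v$ that touch $0$ are in bijection with paths from $-u$ to $v$) gives
\begin{equation*}
\P_u\big(\underline{Z}_k>0,\,Z_k=v\big)=2^{-k}\binom{k}{(k+v-u)/2}-2^{-k}\binom{k}{(k+v+u)/2}.
\end{equation*}
From this identity, the problem reduces in each case to controlling the cancellation between two binomial probabilities that agree to leading order.

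For formulas \eqref{asympy} and \eqref{asympy2} ($u=1$, $v=y\ge 1$), the two binomials are adjacent and one factors
\begin{equation*}
\binom{k}{(k+y-1)/2}-\binom{k}{(k+y+1)/2}=\binom{k}{(k+y-1)/2}\cdot\frac{2y}{k+y+1}.
\end{equation*}
The remaining binomial is $\binom{k}{(k-(y-1))/2}$, to which the second formula of Lemma \ref{ineq} applies whenever $|y-1|\le x^{1-\delta}$, yielding $2^{-k}\binom{k}{(k+y-1)/2}=\sqrt{2/(\pi k)}\,(1+O(y^2/k))$; multiplication by $2y/(k+y+1)\sim 2y/(k+1)$ produces the announced asymptotics. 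For \eqref{asympx} and \eqref{asympxy} ($u=x$ large, $v=y$), one applies the first formula of Lemma \ref{ineq} separately to both binomials and uses $(x\mp y)^2=x^2+y^2\mp 2xy$ to rewrite the difference as
\begin{equation*}
2^{-k}\Big[\binom{k}{(k-(x-y))/2}-\binom{k}{(k-(x+y))/2}\Big]=\sqrt{\tfrac{2}{\pi k}}\,e^{-\frac{x^2+y^2}{2k}}\cdot 2\sinh\!\Big(\frac{xy}{k}\Big)\,\big(1+o(x^3/k^2)\big).
\end{equation*}
Then $\sinh(xy/k)=(xy/k)\big(1+O((xy)^2/k^2)\big)$ and $e^{-y^2/(2k)}=1+O(y^2/k)$ give precisely the error terms announced in \eqref{asympxy}; case \eqref{asympx} is the specialization $y=1$, for which the $O(y^2/k)=O(1/k)$ correction is absorbed into the leading term.

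The main obstacle is the bookkeeping of error terms through the cancellation, which reduces the order of magnitude: the two binomials individually are of size $\sim k^{-1/2}$, whereas their difference is of order $2xy/k^{3/2}$, a factor $xy/k$ smaller. For Lemma \ref{ineq}'s relative error $o(x^3/k^2)$ to remain negligible compared to this difference, one needs $x^3/k^2=o(1)$, which is exactly what the hypothesis $k\ge x^{2-\delta/2}$ ensures (it gives $x^3/k^2\le x^{-1+\delta}$). A secondary point: Lemma \ref{ineq}'s bound is stated for $\ln k/\ln x\in[2-\delta,2]$, so the complementary regime $k\gg x^2$ must be handled separately, but this case is easier since $x^2/k\to 0$ forces $e^{-x^2/(2k)}\to 1$ and the Gaussian correction trivializes. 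Parity issues (the binomial indices must be integers) introduce only $\pm\tfrac12$ shifts in $k$ that are absorbed by the error terms.
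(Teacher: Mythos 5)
Your proposal is correct and follows essentially the same route as the paper: the Désiré André reflection principle reduces each probability to a difference of two binomial coefficients, which is then evaluated with the Stirling estimates of Lemma \ref{ineq}, and for \eqref{asympxy} your factorization $e^{-(x^2+y^2)/2k}\cdot 2\sinh(xy/k)$ is exactly the paper's computation. The only cosmetic differences are that for the cases ending at $1$ the paper collapses the two adjacent binomials into the single term $\frac{2x}{k+1}\binom{k+1}{\frac{k+1-x}{2}}$ instead of factoring out $\frac{2y}{k+y+1}$ as you do (and obtains \eqref{asympx} that way rather than as the $y=1$ case of \eqref{asympxy}), and the large-$k$ regime $k\gg x^2$, which you rightly note falls outside Lemma \ref{ineq}'s stated hypothesis, is passed over silently in the paper as well.
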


\begin{proof}
Using the stationarity of $(Z_k)_{k\ge0}$ and the Desire Andr\'e's reflexion principle (see  \cite{Feller}, p. 72-73 and 95 problems for solution):
\begin{align*}
\P_x( \underline Z_{k}>0,  Z_{k}=1)=\P( \underline Z_{k}>-x,  Z_{k}=1-x)=\frac{1}{2^k}\left(\binom{k}{\frac{k+1-x}{2}}-\binom{k}{\frac{k-x-1}{2}}\right)=\left(\frac{1}{2}\right)^{k+1}\frac{2x}{k+1}\binom{k+1}{\frac{k+1-x}{2}},
\end{align*}
and we easily obtain \eqref{asympy} taking $x=1$ and using Lemma \ref{ineq}. The reasonings to obtain \eqref{asympx} and \eqref{asympy2} are very similar and use again Lemma \ref{ineq}.\\ 
To prove \eqref{asympxy}, we use again the Desire Andr\'e's reflexion principle and formula \eqref{ineqx}
\begin{align*}
\P_x( \underline Z_{k}>0,  Z_{k}=y)&=\P( \underline Z_{k}>-x,  Z_{k}=y-x)
=\left(\frac{1}{2}\right)^{k}\left(\binom{k}{\frac{k-(x-y)}{2}}-\binom{k}{\frac{k-(x+y)}{2}}\right)\\
&={\sqrt{\frac{2}{\pi k} }}\left(1+o\left(\frac{x^3}{k^2}\right)\right)\left(e^{-\frac{(x-y)^2}{2k}}-e^{-\frac{(x+y)^2}{2k}}\right)  \\
&={\sqrt{\frac{2}{\pi k} }}\left(1+o\left(\frac{x^3}{k^2}\right)\right)e^{-\frac{x^2+y^2}{2k}}\left(e^{\frac{xy}{k}}-e^{-\frac{xy}{k}}\right)\\
&={\sqrt{\frac{2}{\pi k} }}\left(1+o\left(\frac{x^3}{k^2}\right)\right)e^{-\frac{x^2+y^2}{2k}}\left(\frac{2xy}{k}+O \left(\frac{(xy)^3}{k^3}\right)\right)\\
&={\sqrt{\frac{2}{\pi} }}\frac{2xy}{k^{\frac{3}{2}}}e^{-\frac{x^2+y^2}{2k}}\left(1+o\left(\frac{x^3}{k^2}\right)\right)\left(1+O \left(\frac{(xy)^2}{k^2}\right)\right) \\
& =\sqrt{\frac{2}{\pi} }\frac{2xy}{k^{\frac{3}{2}}}e^{-\frac{x^2}{2k}}\left(1+o\left(\frac{x^3}{k^2}\right)\right)\left(1+O\left(\frac{y^2}{k}\right)\right)\left(1+O \left(\frac{(xy)^2}{k^2}\right) \right). 
\end{align*}\end{proof}

\noindent To finish we recall elementary facts use several times in Section  \ref{lecone}.
\begin{lem} 
When $k$ goes to infinity:
\begin{equation}\label{BE}
  \sum_{j\in B_{k,\varepsilon}}\P(\mathscr H_j^{k}) \geq 1-O\Big(\frac{1}{\sqrt{k}}\Big),
\end{equation}
and for all  $j\notin B_{k,\varepsilon}$
\begin{equation}\label{BE2}
 \P(\mathscr H_j^{k}) \le  e^{-\frac{\varepsilon^2 k}{6}}. 
\end{equation}
\end{lem}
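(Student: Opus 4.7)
The plan is to observe that in the $2$-dimensional simple random walk, each step is independently horizontal or vertical with probability $\nicefrac{1}{2}$ each, so the number of horizontal steps among the first $k-1$ steps is $S := \sum_{i=1}^{k-1} \xi_i$ where $(\xi_i)$ are i.i.d. Bernoulli$(\nicefrac{1}{2})$. In particular $\P(\mathscr H_j^k)=\P(S=j)=\binom{k-1}{j}2^{-(k-1)}$, $\E[S]=(k-1)/2$ and $\V(S)=(k-1)/4$. From there both inequalities are standard concentration bounds for a symmetric binomial.

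For \eqref{BE2}, I would invoke Hoeffding's inequality for the binomial: for any $t>0$,
\[ \P\bigl(|S-(k-1)/2|\ge t\bigr)\le 2\exp\!\bigl(-2t^{2}/(k-1)\bigr). \]
If $j\notin B_{k,\varepsilon}$, then $|j-(k-1)/2|\ge \varepsilon(k-1)/2$, and since $\P(\mathscr H_j^k)=\P(S=j)$ is bounded above by the probability of the corresponding one-sided tail, this gives
\[ \P(\mathscr H_j^k)\le 2\exp\!\bigl(-\varepsilon^{2}(k-1)/2\bigr). \]
For $k$ large enough this is smaller than $e^{-\varepsilon^{2}k/6}$, which is the claim.

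For \eqref{BE}, the simplest route is Chebyshev: $\V(S)=(k-1)/4$ yields
\[ \sum_{j\notin B_{k,\varepsilon}}\!\P(\mathscr H_j^k)=\P\bigl(|S-(k-1)/2|>\varepsilon(k-1)/2\bigr)\le \frac{4}{\varepsilon^{2}(k-1)}=O(1/k), \]
which in particular is $O(1/\sqrt{k})$. Taking complements yields the stated lower bound. Alternatively one may just sum the Hoeffding bound obtained above over the at most $k$ indices $j\notin B_{k,\varepsilon}$, which gives a much stronger (exponentially small) bound and a fortiori $1-O(1/\sqrt{k})$.

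Neither step presents a genuine obstacle; the only thing to be mindful of is the constant $\nicefrac{1}{6}$ in the exponent, which forces one to absorb the factor $2$ in front of $\exp$ and to use $k$ large enough so that $\varepsilon^{2}(k-1)/2-\log 2\ge \varepsilon^{2}k/6$. This is trivially satisfied once $k\ge 1+3/(\varepsilon^{2})(\,\log 2 + \varepsilon^{2}/3\,)$, so the conclusion holds for all $k$ sufficiently large, uniformly in $j\notin B_{k,\varepsilon}$.
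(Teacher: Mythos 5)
Your proof is correct, but it runs on slightly different engines than the paper's one-line argument, which invokes the Berry--Esseen inequality for \eqref{BE} and the Chernoff bound for \eqref{BE2}. For \eqref{BE2} the difference is cosmetic: the paper's constant $\nicefrac{1}{6}$ is exactly what the multiplicative Chernoff bound $\P\bigl(S\ge(1+\varepsilon)\mu\bigr)\le e^{-\varepsilon^{2}\mu/3}$ gives with $\mu=(k-1)/2$, valid for all $k$, whereas your Hoeffding bound yields the stronger exponent $\varepsilon^{2}(k-1)/2$ at the price of a factor $2$ and a ``$k$ large enough'' caveat; since the lemma is asymptotic and \eqref{BE2} is only used summed over $k\ge x$ with $x\to\infty$ (as in \eqref{neglig1}), this is harmless. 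For \eqref{BE} your route is genuinely more elementary: Chebyshev gives $\sum_{j\notin B_{k,\varepsilon}}\P(\mathscr H_j^{k})\le 4/(\varepsilon^{2}(k-1))=O(1/k)$, which a fortiori yields the stated $1-O(1/\sqrt{k})$ (and your summed-Hoeffding variant even gives an exponentially small complement), while the paper's $1/\sqrt{k}$ is simply the Berry--Esseen rate; since \eqref{BE} is only ever used as a lower bound of the form $1+O(1/\sqrt{k})$ in the proof of Lemma \ref{MarcheSimple}, your stronger estimate is perfectly adequate. The one point worth being explicit about, which you correctly identify at the outset, is that on the cone the steps are those of the simple random walk, so the horizontal/vertical labels of the first $k-1$ steps are i.i.d. Bernoulli$(\nicefrac{1}{2})$ and $\P(\mathscr H_j^{k})$ is a symmetric binomial probability; everything else is standard.
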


\begin{proof}
Formulas \eqref{BE} and \eqref{BE2} are respectively applications of the Berry-Esseen and Chernoff inequalities.
\end{proof}

\begin{lem}\label{BE3}
For $k>x-y$ with $y=o(x)$ and $0<\varepsilon<\nicefrac{1}{3}$, there exists $c_->0$ such that for $x$ large enough:
\begin{align*}
\max_{j\in B_{k,\varepsilon}}\P(Z_{j} \geq x-y)  \leq e^{-\frac{c_- x^{2}}{k}}.
\end{align*}
\end{lem}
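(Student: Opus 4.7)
The plan is a direct application of the standard Chernoff--Hoeffding bound for sums of Rademacher variables, combined with the two inputs we are given: the interval defining $B_{k,\varepsilon}$ sits strictly below $k$, and $y = o(x)$ makes $(x-y)^2$ comparable to $x^2$.

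First, writing $Z_j = \sum_{i=1}^{j} \xi_i$ with $\xi_i$ i.i.d.\ uniform on $\{-1,+1\}$, Hoeffding's inequality gives, for any $t>0$,
\[
\P(Z_j \geq t) \leq e^{-t^2/(2j)}.
\]
Since $y=o(x)$, the quantity $x-y$ is positive for $x$ large, so applying this with $t = x-y$ yields
\[
\P(Z_j \geq x-y) \leq e^{-(x-y)^2/(2j)}.
\]

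Next, I would bound $j$ uniformly on $B_{k,\varepsilon}$: by definition any $j \in B_{k,\varepsilon}$ satisfies $j \leq (k-1)(1+\varepsilon)/2 \leq k(1+\varepsilon)/2$, hence
\[
\frac{(x-y)^2}{2j} \;\geq\; \frac{(x-y)^2}{k(1+\varepsilon)}.
\]
Finally, since $y = o(x)$, for $x$ large enough one has $(x-y)^2 \geq x^2/2$ (say), so the exponent is at least $x^2/(2k(1+\varepsilon))$. With the restriction $\varepsilon < 1/3$, choosing for instance $c_- := 1/(4(1+\varepsilon)) \leq 3/16$ gives
\[
\max_{j \in B_{k,\varepsilon}} \P(Z_j \geq x-y) \;\leq\; e^{-c_- x^2/k}
\]
for $x$ sufficiently large, as desired.

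There is no serious obstacle here: the whole content is that $j$ stays bounded away from $k$ (by a factor $(1+\varepsilon)/2 < 2/3$) on $B_{k,\varepsilon}$, so the trivial Gaussian tail $e^{-t^2/(2j)}$ still gives a bound of the form $e^{-c_- x^2/k}$ after absorbing the $o(1)$ coming from $(x-y)^2 = x^2(1+o(1))$ into the constant. The only thing to be careful about is keeping the constant $c_-$ strictly positive and independent of $x,k,y$ as long as $y/x$ is small enough, which is ensured by the hypothesis $y=o(x)$.
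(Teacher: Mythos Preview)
Your proof is correct and follows essentially the same Chernoff-type tail argument as the paper. The only difference is presentational: the paper rederives the exponential bound by hand (Markov on $e^{\alpha Z_j}$, then $\cosh\alpha\le 1+\tfrac{3}{4}\alpha^2$ for $0<\alpha<2$, then optimize $\alpha$), and it is precisely this inline derivation that forces the hypothesis $\varepsilon<\tfrac{1}{3}$ --- it guarantees the optimal $\alpha=\tfrac{2(x-y)}{3j}$ stays below $2$ on $B_{k,\varepsilon}$. By quoting Hoeffding directly you bypass this constraint, so your argument in fact works for any $0<\varepsilon<1$; this is a small gain in generality. One tiny slip: with $\varepsilon<\tfrac{1}{3}$ you have $1/(4(1+\varepsilon))>3/16$, not $\le$, so your parenthetical inequality is reversed (and in fact you could take $c_-=1/(2(1+\varepsilon))$ rather than halving it again).
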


\begin{proof} Proof is elementary, we give some details for completeness. Assume $y=1$ for the moment, for any $\alpha>0$: 
\begin{align*}
\P(Z_{j} \geq x-1)&=\P\left(e^{\alpha Z_{j}} \geq e^{\alpha(x-1)}\right)\le \E\left[e^{\alpha Z_{j}} \right]e^{-\alpha(x-1)}=\E\left[e^{\alpha Z_{1}} \right]^{j}e^{-\alpha(x-1)}\\
&=e^{j\ln \E\left[e^{\alpha Z_{1}} \right]-\alpha(x-1)}\le e^{j\left(\E\left[e^{\alpha Z_{1}} \right]-1\right)-\alpha(x-1)}=e^{j\left(\cosh \alpha-1\right)-\alpha(x-1)}.
\end{align*}
For $0<\alpha<2$, $\cosh \alpha\le1+ \frac{3}{4}\alpha^2$ implying that for such $\alpha$:
\begin{align*}
\P(Z_{j} \geq x-1)&\le e^{\frac{3}{4}j\alpha^2-\alpha(x-1)}=:e^{\psi(\alpha)}.
\end{align*}
One can find that $\psi$ reaches its minimum for $\alpha=\frac{2(x-1)}{3j}$ (note that for $0<\varepsilon<\nicefrac{1}{3}$ and $k>x$, this quantity is bounded above by 2) and for that choice of $\alpha$:
\begin{align*}
\P(Z_{j} \geq x-1)& \le e^{-\frac{(x-1)^2}{3j}}\le e^{-\frac{(x-1)^2}{3k}}\le e^{-\frac{x^2}{6k}},
\end{align*} 
for $x$ large enough. This finishes the proof for $y=1$, for general $y$ proof is identical just the optimal $\alpha$ changes in $\alpha={2(x-y)}/{3j}$.
\end{proof}

\noindent \textbf{Acknowledgment: }We would like to thank Julien Barr\'e for introducing us the work of physicists  and the problematics which inspire this paper.

\bibliographystyle{alpha}
\bibliography{thbiblio}

\end{document}